\documentclass[11pt]{amsart}
\usepackage{mathrsfs,amssymb}
\usepackage{verbatim}
\usepackage{amsfonts}
\usepackage{amsmath}
\usepackage{amsthm}
\usepackage{relsize}
\usepackage{color}
\usepackage[mathscr]{euscript}
\usepackage[rightcaption]{sidecap}
\usepackage{graphicx}
\usepackage{mmacells}
\newtheorem{theorem}{Theorem}[section]
\newtheorem{lemma}[theorem]{Lemma}

\newtheorem{conjecture}[theorem]{Conjecture}

\setlength{\parindent}{0cm}

\lstdefinestyle{mystyle}{
    backgroundcolor=\color{backcolour},   
    commentstyle=\color{codered},
    keywordstyle=\color{orange},
    numberstyle=\tiny\color{codegray},
    stringstyle=\color{codegreen},
    basicstyle=\ttfamily\footnotesize,
    breakatwhitespace=false,         
    breaklines=true,                 
    captionpos=b,                    
    keepspaces=true,                 
    numbers=left,                    
    numbersep=5pt,                  
    showspaces=false,                
    showstringspaces=false,
    showtabs=false,                  
    tabsize=2
}

\lstset{style=mystyle}

\newcommand{\Q}{\mathbb{Q}}

\newcommand{\R}{\mathbb{R}}

\newcommand{\Z}{\mathbb{Z}}
\newcommand{\A}{\mathfrak{a}}
\newcommand{\B}{\mathfrak{b}}

\newcommand{\C}{\mathbb{C}}
\newcommand{\p}{\mathfrak{p}}

\begin{document}

\title[Gaussian Primes Across Sectors]{A Refined Conjecture for the Variance of Gaussian Primes Across Sectors}
\author[RC, YK,  JL, SM,  AS, SS, EW, EW, JY]{Ryan C. Chen, Yujin H. Kim,  Jared D. Lichtman,\\
Steven J. Miller,  Alina Shubina, Shannon Sweitzer,\\
Ezra Waxman, Eric Winsor, Jianing Yang}
\date{\today}

\address{Department of Mathematics, Princeton University, Princeton, NJ 08544}
\email{rcchen@princeton.edu}

\address{Department of Mathematics, Columbia University, New York, NY 10027}
\email{yujin.kim@columbia.edu}

\address{Department of Mathematics, Dartmouth College, Hanover, NH 03755}
\email{jared.d.lichtman@gmail.com}

\address{Department of Mathematics and Statistics, Williams College, Williamstown, MA 01267}
\email{sjm1@williams.edu}

\address{Department of Mathematics and Statistics, Williams College, Williamstown, MA 01267}
\email{as31@williams.edu}

\address{Department of Mathematics, University California, Riverside, CA 92521}
\email{sswei001@ucr.edu}

\address{Charles University, Faculty of Mathematics and Physics, Department of Algebra, So\-ko\-lov\-sk\' a~83, 18600 Praha~8, Czech Republic}
\email{ezrawaxman@gmail.com}

\address{Department of Mathematics, University of Michigan, Ann Arbor, MI 48109}
\email{rcwnsr@umich.edu}

\address{Department of Mathematics, Colby College, Waterville, ME 04901}
\email{jyang@colby.edu}

\numberwithin{equation}{section}
\maketitle
\begin{abstract}
We derive a refined conjecture for the variance of Gaussian primes across sectors, with a power saving error term, by applying the $L$-functions Ratios Conjecture.  We observe a bifurcation point in the main term, consistent with the Random Matrix Theory (RMT) heuristic previously proposed by Rudnick and Waxman.  Our model also identifies a second bifurcation point, undetected by the RMT model, that emerges upon taking into account lower order terms.  For sufficiently small sectors, we moreover prove an unconditional result that is consistent with our conjecture down to lower order terms.
\end{abstract}
\maketitle

\section{Introduction}
Consider the ring of Gaussian integers $\mathbb{Z}[i]$, which is the ring of integers of the imaginary quadratic field $\Q(i)$.  Let $\A = \langle \alpha \rangle$ be an ideal in $\Z[i]$ generated by the Gaussian integer $\alpha \in \Z[i]$.  The \textit{norm} of the ideal $\A$ is defined as $N(\A) := \alpha \cdot \overline{\alpha}$, where $\alpha \mapsto \overline{\alpha}$ denotes complex conjugation.  Let $\theta_{\alpha}$ denote the argument of $\alpha$.  Since $\Z[i]$ is a principal ideal domain, and the generators of $\A$ differ by multiplication by a unit $\{\pm 1, \pm i\} \in \Z[i]^{\times}$, we find that $\theta_{\A} := \theta_{\alpha}$ is well-defined modulo $\pi/2$.  We may thus fix $\theta_{\A}$ to lie in $[0,\pi/2)$, which corresponds to choosing a generator $\alpha$ that lies within the first quadrant of the complex plane.\\
\\
We are interested in studying the angular distribution of $\{\theta_{\p}\} \in [0,\pi/2)$, where $\p\subsetneq \Z[i]$ are the collection of prime ideals with norm $N(\p)\leq X$.  To optimize the accuracy of our methods, we employ several standard analytic techniques.  In particular, we count the number of angles lying in a short segment of length $1/K$ in $[0,\pi/2]$ using a smooth window function, denoted by $F_{K}(\theta)$, and we count the number of ideals $\A$ with norm $N(\A)\leq X$ using a smooth function, denoted by $\Phi$.  We moreover count prime ideals using the weight provided by the Von Mangoldt function, defined as $\Lambda(\A) = \log N(\p)$ if $\A = \p^{r}$ is a power of a prime ideal $\p$, and $\Lambda(\A) = 0$ otherwise.
\\
\\
Let $f \in C_{c}^{\infty}(\R)$ be an even, real-valued window function.  For $K \gg 1$, define
\begin{equation}\label{Fk}
F_{K}(\theta)\ := \ \sum_{k \in \Z}f\left(\frac{K}{\pi/2}\left(\theta-\frac {\pi}{ 2}\cdot k\right)\right),
\end{equation}
which is a $\pi/2$-periodic function whose support in $[0,\pi/2)$ is on a scale of $1/K$.  The Fourier expansion of $F_{K}$ is given by
\begin{equation}\label{fourapprox}
F_{K}(\theta) = \sum_{k \in \Z}\widehat{F}_{K}(k)e^{i4k\theta},\hspace{10mm} \widehat{F}_{K}(k)=\frac{1}{K}\widehat{f}\left(\frac{k}{K}\right),
\end{equation}
where the normalization is defined to be $\widehat{f}\left(y \right):=\int_{\R}f(x)e^{-2\pi i y x}dx.$\\
\\
Let $\Phi \in C_{c}^{\infty}(0,\infty)$ and denote the \textit{Mellin transform} of $\Phi$ by 

\begin{equation}
\tilde{\Phi}(s)\ := \ \int_{0}^{\infty}\Phi(x)x^{s-1}dx.
\end{equation}
Define
\begin{equation}\label{pisum}
\psi_{K,X}(\theta)
:=\sum_{\A} \Phi \left(\frac{N(\A)}{X}\right) \Lambda(\A) F_K(\theta_{\A}  -\theta),
\end{equation}
where $\A$ runs over all nonzero ideals in $\Z[i]$.  We may then think of $\psi_{K,X}(\theta)$ as a smooth count for the number of prime power ideals less than $X$ lying in a window of scale $1/K$ about $\theta$.  As in Lemma 3.1 of \cite{RudWax}, the mean value of $\psi_{K,X}(\theta)$ is given by

\begin{align} \label{expvalue}
\begin{split}
\langle\psi_{K,X}\rangle &:= \frac {1}{\pi/2}\int_{0}^{\frac \pi 2}\sum_{\A} \Phi \left(\frac{N(\A)}{X}\right) \Lambda(\A) F_K(\theta_{\A}  -\theta)d\theta \sim \frac{X}{K}\cdot C_{\Phi}\cdot c_{f},
\end{split}
\end{align}
where 
\begin{equation}\label{mean value constants}
c_{f} := \frac{1}{4\pi^2}\int_{\R}f(x)dx, \hspace{5mm} \textnormal{ and } \hspace{5mm} C_{\Phi}:= 4\pi^{2}\int_{0}^{\infty}\Phi(u)du.
\end{equation}
For fixed $K>0$, then a smooth version of a result from Hecke \cite{Hecke} states that in the limit as $X \rightarrow \infty$,

\begin{equation}\label{shrink}
\psi_{K,X}(\theta) \sim \frac{X}{K}\cdot c_{f}\cdot C_{\Phi}.
\end{equation}
Alternatively, one may study the behavior of $\psi_{K,X}(\theta)$ for shrinking intervals, i.e. for large $K$.  It follows from the work of Kubilius \cite{Kubilius 1955} that under the assumption of the Grand Riemann Hypothesis (GRH), (\ref{shrink}) continues to hold for $K \ll X^{1/2-o(1)}$.\\
\\
In this paper, we wish to study
\begin{align}
\textnormal{Var}(\psi_{K,X}) &:=  \frac{1}{\pi/2}\int_{0}^{\frac \pi 2}\bigg|\psi_{K,X}(\theta) - \langle\psi_{K,X}\rangle\bigg|^{2}d\theta.
\end{align}
Such a quantity was investigated by Rudnick and Waxman \cite{RudWax}, who, assuming GRH, obtained an upper bound for $\textnormal{Var}(\psi_{K,X})$.\footnote{See also \cite{SarnakUri}.}  They then used this upper bound to prove that almost all arcs of length $1/K$ contain at least one angle $\theta_{\mathfrak p}$ attached to a prime ideal with $N(\mathfrak p)\leq K(\log K)^{2+o(1)}$.\\
\\
Montogomery \cite{Montgomery} showed that the pair correlation of zeros of $\zeta(s)$ behaves similarly to that of an ensemble of random matrices, linking the zero distribution of the zeta function to eigenvalues of random matrices.  The Katz-Sarnak density conjecture \cite{KatzSarn1,KatzSarn2} extended this connection by relating the distribution of zeros across families of $L$-functions to eigenvalues of random matrices.  Random matrix theory (RMT) has since served as an important aid in modeling the statistics of various quantities associated to $L$-functions, such as the spacing of zeros \cite{Hedjhal, Odlyzko, Sound}, and moments of $L$-functions \cite{ConreyIII, ConreyIV}.  Motivated by a suitable RMT model for the zeros of a family of Hecke $L$-functions, as well as a function field analogue, Rudnick and Waxman conjectured that

\begin{equation}\label{RMT Conjecture}
\textnormal{Var}(\psi_{K,X})  \sim \int_{\R}f(y)^{2}dy \int_{0}^{\infty}\Phi(x)^{2}dx\cdot \textnormal{min} (\log X, 2\log K).
\end{equation}

Inspired by calculations for the characteristic polynomials of matrices averaged over the compact classical groups, Conrey, Farmer, and Zirnbauer \cite{Conrey, ConreyII} further exploited the relationship between $L$-functions and random matrices to conjecture a recipe for calculating the ratio of a product of shifted $L$-functions averaged over a family.  The \textit{$L$-functions Ratios Conjecture} has since been employed in a variety of applications, such as computing $n$-level densities across a family of $L$-functions, mollified moments of $L$-functions, and discrete averages over zeros of the Riemann Zeta function \cite{ConreySnaith}.  The Ratios Conjecture has also been extended to the function field setting \cite{Andrade}.  While constructing a model using the Ratios Conjecture may pose additional technical challenges, the reward is often a more accurate model; RMT heuristics can model assymptotic behavior, but the Ratios Conjecture is expected to hold down to lower order terms.  This has been demonstrated, for example, in the context of one-level density computations, by Fiorilli, Parks and S\"odergren \cite{FiorParkS}.
\\
\\
This paper studies $\textnormal{Var}(\psi_{K,X})$ down to lower-order terms.  Define a new parameter $\lambda$ such that $X^{\lambda} = K$.  We prove the following theorem:

\begin{theorem}\label{trivial regime theorem}
Fix $\lambda > 1$.  Then
\begin{equation}
\frac {\textnormal{Var}(\psi_{K,X})}{C_{f}X^{1-\lambda}} = C_{\Phi} \log X+C'_{\Phi}+\pi^2\tilde{\Phi}\left(\frac 1 2\right)^2+o\left(1\right),
\end{equation}
where

\begin{align}
\begin{split}\label{C constant 1}
C_{f} &:=\frac{1}{ 4\pi^2}\int_{\R}f(y)^{2}dy \hspace{15mm} C'_{\Phi} := 4\pi^2 \cdot \int_{0}^{\infty}\log x \cdot \Phi(x)^2 ~dx,
\end{split}
\end{align}
and $C_{\Phi}$ is as in $\textnormal{(\ref{mean value constants})}$. Under GRH, the error term can be improved to $O_{\Phi}\left(X^{-\epsilon}\right)$ for some $\epsilon>0$ $($depending on $\lambda)$.
\end{theorem}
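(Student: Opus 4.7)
The plan is to reduce the variance to a sum over twisted von~Mangoldt sums by Fourier expansion, and then analyze each twist via the explicit formula and the Ratios Conjecture. Expanding $F_K$ via (\ref{fourapprox}), I would write $\psi_{K,X}(\theta) = \sum_{k}\widehat{F}_K(k)\,e^{-i4k\theta}\,\psi_k(X)$ where
\[\psi_k(X) \;:=\; \sum_{\A}\Phi\Bigl(\frac{N(\A)}{X}\Bigr)\Lambda(\A)\,e^{i4k\theta_\A}\]
is a von~Mangoldt sum in $\Z[i]$ twisted by the Hecke Gr\"ossencharakter $\Xi_k$ of infinity type $4k$ and trivial modulus. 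Since $\langle \psi_{K,X}\rangle$ is exactly the $k=0$ contribution, orthogonality of $\{e^{i4k\theta}\}_k$ on $[0,\pi/2)$ gives
\[\textnormal{Var}(\psi_{K,X}) \;=\; \sum_{k \neq 0}|\widehat{F}_K(k)|^2\,|\psi_k(X)|^2 \;=\; \frac{1}{K^2}\sum_{k \neq 0}\bigl|\widehat{f}(k/K)\bigr|^2\,|\psi_k(X)|^2,\]
and the task becomes evaluating this $k$-sum down to lower-order terms.

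Next I would evaluate $|\psi_k(X)|^2$ by Mellin inversion. For each $k\neq 0$ the $L$-function $L(s,\Xi_k)$ is entire, and shifting the contour in
\[\psi_k(X) \;=\; \frac{1}{2\pi i}\int_{(c)}\tilde{\Phi}(s)X^s\Bigl(-\frac{L'}{L}(s,\Xi_k)\Bigr)ds\]
past the critical line produces $\psi_k(X) = -\sum_{\rho}\tilde{\Phi}(\rho)X^\rho + (\text{negligible})$. Squaring and then averaging over $k$ with weight $K^{-2}|\widehat{f}(k/K)|^2$ yields a double sum over pairs of zeros $(\rho,\rho')$. Applying the Conrey--Farmer--Zirnbauer Ratios recipe to the family $\{\Xi_k\}$ gives an asymptotic for this average: the diagonal $\rho=\rho'$ contributes the leading $\log X$ behavior via the Riemann--von~Mangoldt counting formula for $L(s,\Xi_k)$ combined with the Mellin--Plancherel identity $\int_{\R}|\tilde{\Phi}(\tfrac{1}{2}+it)|^2\,dt = 2\pi\int_0^\infty \Phi(x)^2\,dx$, while the secondary (constant) piece of the zero-density produces $C'_\Phi$. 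The additional $\pi^2\tilde{\Phi}(1/2)^2$ constant arises from the ``swap'' term at the central point $s=1/2$ in the Ratios recipe applied to $(L'/L)(s,\Xi_k)\,\overline{(L'/L)(s',\Xi_k)}$, analogous to the shift contribution that produces refined lower-order terms in one- and two-level density computations.

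The main obstacles are (i) justifying the uniform applicability of the Ratios recipe across the family $\{\Xi_k\}_{|k|\lesssim K}$, (ii) tracking the constant pieces in both the zero-counting and Mellin inversion asymptotics precisely enough to isolate $C'_\Phi$ and $\tilde{\Phi}(1/2)^2$ without them being swamped by accumulated error, and (iii) controlling the tails of both the $k$-sum (via rapid decay of $\widehat{f}$) and the shifted Mellin contour. For $\lambda>1$ the regime is favorable enough to admit an unconditional treatment: an effective prime ideal theorem for Hecke characters $\Xi_k$ over $\Q(i)$, uniform in the conductor, suffices to extract the $o(1)$ error, while under GRH the bound $|\psi_k(X)| \ll X^{1/2+\epsilon}(|k|X)^{\epsilon}$ combined with Plancherel on the $k$-sum delivers the stated $X^{-\epsilon}$ power saving.
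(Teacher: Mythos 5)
Your opening reduction is fine: expanding $F_K$ in Fourier series and using orthogonality of $\{e^{i4k\theta}\}$ on $[0,\pi/2)$ to write $\textnormal{Var}(\psi_{K,X})=\sum_{k\neq 0}|\widehat{F}_K(k)|^2|\psi_k(X)|^2$ is exactly the identity the paper uses (in Section 5). But from there your route has a fatal problem for \emph{this} theorem: you invoke the $L$-functions Ratios Conjecture to evaluate the $k$-average, and the Ratios Conjecture is unproven. Theorem \ref{trivial regime theorem} asserts an \emph{unconditional} asymptotic with $o(1)$ error (and a power saving assuming only GRH); a proof resting on the Ratios recipe establishes neither. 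Your proposed GRH endgame also does not close the gap: the pointwise bound $|\psi_k(X)|\ll X^{1/2+\epsilon}$ squared and summed against $K^{-2}|\widehat{f}(k/K)|^2$ gives $\ll X^{1+\epsilon}/K=X^{1-\lambda+\epsilon}$, which is the \emph{size of the main term}, not a power saving below it — so it yields an upper bound, not an asymptotic.

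The feature of the regime $\lambda>1$ that you are not exploiting is that $K\gg X$, so the problem is purely diagonal and needs no $L$-function family statistics at all. By Lemma 2.1 of \cite{RudWax}, two ideals of norm $\leq X$ with distinct angles (at least one nonzero) satisfy $|\theta_{\A}-\theta_{\B}|\geq 1/X\gg 1/K$, so the cross terms $\int F_K(\theta_{\A}-\theta)F_K(\theta_{\B}-\theta)\,d\theta$ vanish and only pairs with $\theta_{\A}=\theta_{\B}$ survive. The variance then reduces, via Parseval ($\frac{2}{\pi}\int_0^{\pi/2}F_K^2=4\pi^2C_f/K$), to two classical sums: the split prime powers with $\theta_{\A}\neq 0$, giving $C_{\Phi}\log X+C'_{\Phi}$ by Abel summation and the prime number theorem in progressions mod $4$; and the square of the angle-zero sum $\bigl|\sum_{\theta_{\A}=0}\Lambda(\A)\Phi(N(\A)/X)\bigr|^2\sim\frac{X}{4}\tilde{\Phi}(\tfrac12)^2$, evaluated by Mellin inversion against $\frac{L'}{L}(2s,\chi_1)-\frac{L'}{L}(2s,\chi_0)$. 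In particular your attribution of the constant $\pi^2\tilde{\Phi}(\tfrac12)^2$ to a ``swap'' term at the central point is wrong: it is the contribution of the inert primes $p\equiv 3\ (4)$, whose ideals all sit at angle zero; and even within the Ratios framework the swap integrals $I_2,I_3,I_4$ are $O(X^{-\epsilon})$ for $\lambda>1$, the $\tilde{\Phi}(\tfrac12)^2$ term there arising instead from the poles of $\frac{\zeta'}{\zeta}(1+2\alpha)\frac{\zeta'}{\zeta}(1+2\beta)$ at $\alpha=\beta=0$ in the first (non-swapped) integral.
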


The proof of Theorem \ref{trivial regime theorem} is given in Section 2, and is obtained by classical methods.  For $\lambda < 1$ the computation is more difficult, and we use the Ratios Conjecture to suggest the following.
\begin{conjecture}\label{conj} Fix $0 < \lambda <1 $.  We have
\begin{equation}
\frac {\textnormal{Var}(\psi_{K,X})}{C_{f}X^{1-\lambda}} = 
\left\{
\begin{array}{l l}
C_{\Phi} \log X+\Delta_{\Phi}+O_{\Phi}\left(X^{-\epsilon}\right) & \textnormal{ if } \frac 1 2 <\lambda < 1\\
C_{\Phi}\left(2 \lambda \log X\right) -K_{\Phi}+ O_{\Phi}\left(X^{-\epsilon}\right) & \textnormal{ if } \lambda < \frac 1 2,
\end{array} \right.
\end{equation}
where
\begin{equation}
\Delta_{\Phi}:=C'_{\Phi}- \pi^2\tilde{\Phi}\left(\frac 1 2\right)^{2},
\end{equation}
and
\begin{equation}
K_{\Phi}:= C_{\Phi,\zeta}-C_{\Phi,L}-A_{\Phi}'+2\pi^2\tilde{\Phi}\left(\frac 1 2\right)^{2}+C_{\Phi}\left(\log \left(\frac {\pi^2}{4}\right)+2\right),
\end{equation}
for some constant $\epsilon >0$ $($depending on $\lambda)$.  Here $C_{\Phi,\zeta}$, $C_{\Phi,L}$, and $A_{\Phi}'$, are as in $(\ref{zeta constant})$, $(\ref{L constant})$, and $(\ref{A constant})$, respectively.
\end{conjecture}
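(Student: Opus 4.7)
The plan is to Fourier-expand $F_{K}$ via (\ref{fourapprox}) and reduce $\textnormal{Var}(\psi_{K,X})$ to a weighted second moment of Hecke prime sums, which I would then attack with the $L$-functions Ratios Conjecture. The map $\A\mapsto e^{4ik\theta_{\A}}$ is a Hecke Gr\"ossencharakter $\chi_{k}$ on $\Z[i]$ (well-defined modulo units), so inserting (\ref{fourapprox}) into (\ref{pisum}) gives $\psi_{K,X}(\theta)=\sum_{k\in\Z}\widehat{F}_{K}(k)e^{-4ik\theta}M_{k}(X)$ with $M_{k}(X):=\sum_{\A}\Lambda(\A)\chi_{k}(\A)\Phi(N(\A)/X)$. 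The $k=0$ term recovers $\langle\psi_{K,X}\rangle$ from (\ref{expvalue}); Parseval on $[0,\pi/2)$ then yields
\begin{equation*}
\textnormal{Var}(\psi_{K,X}) \,=\, \sum_{k\neq 0}|\widehat{F}_{K}(k)|^{2}\,|M_{k}(X)|^{2}.
\end{equation*}

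Next I would evaluate $|M_{k}(X)|^{2}$ using Ratios. Mellin inversion writes $M_{k}(X) = \frac{1}{2\pi i}\int_{(c)}-\frac{L'}{L}(s,\chi_{k})\tilde{\Phi}(s)X^{s}\,ds$, so $|M_{k}(X)|^{2}$ is a double contour integral of $\frac{L'}{L}(s,\chi_{k})\frac{L'}{L}(w,\chi_{-k})\tilde{\Phi}(s)\tilde{\Phi}(w)X^{s+w}$, using $\overline{\chi_{k}}=\chi_{-k}$. Applying the Ratios recipe to $L(s+\alpha,\chi_{k})L(w+\beta,\chi_{-k})$ and then differentiating in $\alpha,\beta$ produces a closed-form asymptotic with a power-saving error. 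The output has a \emph{diagonal} piece attributable to the pole of the Dedekind zeta $\zeta_{\Q(i)}$ in the arithmetic correction factor $A_{\chi_{k}}$, plus a \emph{swap} piece generated by the functional equation that trades $X^{s+w}$ for $X^{2-s-w}$ times gamma and conductor factors; since the analytic conductor of $\chi_{k}$ grows like $|k|$, the swap carries an effective weight of size $|k|^{1-s-w}$. Shifting $s,w$ past $\tfrac{1}{2}$ and collecting residues at the $\tilde{\Phi}$-poles and at the swap pole at $s=w=\tfrac{1}{2}$ gives, for each fixed $k\neq 0$, an expression of the shape $X\log X + X\cdot(\text{explicit constants}) + X|k|^{-1}\tilde{\Phi}(1/2)^{2}+O(X^{1-\epsilon})$.

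The last step is to sum over $k$ weighted by $|\widehat{F}_{K}(k)|^{2}=K^{-2}|\widehat{f}(k/K)|^{2}$. For the diagonal, a Plancherel/Riemann-sum estimate gives $\sum_{k\neq 0}|\widehat{F}_{K}(k)|^{2}\sim 4\pi^{2}C_{f}/K$, producing the common piece $C_{f}X^{1-\lambda}(C_{\Phi}\log X+\cdots)$ shared by both cases. The swap contribution, with its extra $|k|^{-1}$, sums to $\sum_{k\neq 0}|\widehat{F}_{K}(k)|^{2}|k|^{-1}\sim (\log K)/K$, contributing a $C_{f}X^{1-\lambda}\cdot 2\log K$ term that is dominated by the diagonal $\log X$ when $\lambda>1/2$ (and so is absorbed into the lower-order constant $\Delta_{\Phi}$) but overtakes the diagonal when $\lambda<1/2$, giving the bifurcated leading behaviour. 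The constants $\Delta_{\Phi}$ and $K_{\Phi}$ then emerge as the matched combinations of $C_{\Phi,\zeta}$, $C_{\Phi,L}$, $A_{\Phi}'$, and the $\tilde{\Phi}(1/2)^{2}$ residue.

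The main obstacle is the careful bookkeeping inside the Ratios recipe for the family $\{\chi_{k}\}$: one must track the Gamma-factor logarithmic derivative and the arithmetic correction factor $A_{\chi_{k}}$, perform the residue calculations at $s,w=\tfrac{1}{2}$ correctly, and match the output against the explicit definitions of $C_{\Phi,\zeta}$, $C_{\Phi,L}$, $A_{\Phi}'$ referenced in the conjecture. Keeping the Ratios error uniform in $|k|\lesssim K$ — in particular for the small-conductor characters, which dominate the $k$-sum and for which the Ratios input has the least room — is what would yield the stated $O_{\Phi}(X^{-\epsilon})$ remainder in both regimes.
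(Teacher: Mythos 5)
Your opening reduction is the same as the paper's: Fourier expansion, Parseval to kill the off-diagonal in $k$, and Mellin inversion to write each $M_{k}(X)$ as a contour integral of $-\frac{L_{k}'}{L_{k}}(s)\tilde{\Phi}(s)X^{s}$, so that $\textnormal{Var}(\psi_{K,X})$ becomes a double contour integral of $\sum_{k\neq 0}|\widehat{f}(k/K)|^{2}\frac{L_{k}'}{L_{k}}(s)\frac{L_{k}'}{L_{k}}(w)$. The first genuine gap is where you apply the Ratios recipe: you propose to evaluate $|M_{k}(X)|^{2}$ for each \emph{fixed} $k$ and only then sum over $k$. The Ratios Conjecture is not a statement about a single $L$-function; Steps Two and Three of the recipe replace the epsilon factors and the coefficients by their expected values \emph{averaged over the family}, and here the family is $\{L_{k}: 0<|k|\leq K\}$. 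That average is not cosmetic: the coefficient averages $\delta_{p}(m,n,h,l)$ come from equidistribution of $e^{4ik\theta_{p}}$ as $k$ varies, and the Gamma-factor average produces $(2K)^{-2(\alpha+\beta)}=X^{-2\lambda(\alpha+\beta)}$, which is the \emph{only} place $\lambda$ enters the integrands $I_{2},I_{3},I_{4}$ and hence the only source of the bifurcations. Done per-$k$, there is nothing to average and no principled prediction.

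The second gap is quantitative and would derail the answer even if a per-$k$ heuristic were admissible. The dual term carries the conductor factor $|k|^{1-s-w}$, which at the double pole $s+w=1$ equals $1$, not $|k|^{-1}$; the residue of the double pole produces a term of the shape $X\bigl(\log X-2\log(2|k|)\bigr)$ (a logarithm of $X$ over the conductor squared), not $X|k|^{-1}\tilde{\Phi}(1/2)^{2}$. Moreover your claimed sum $\sum_{k\neq 0}|\widehat{F}_{K}(k)|^{2}|k|^{-1}\sim(\log K)/K$ is off by a factor of $K$: since $|\widehat{F}_{K}(k)|^{2}=K^{-2}|\widehat{f}(k/K)|^{2}$, that sum is $\asymp(\log K)/K^{2}$, which would make your swap contribution negligible against the diagonal $\asymp 1/K$ in both regimes — no bifurcation at all. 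The actual mechanism (Lemma \ref{fourth total} and Section \ref{7th}) is that for $\lambda<\tfrac12$ the factor $X^{(\alpha+\beta)(1-2\lambda)}$ lets one shift left past the double pole at $\alpha+\beta=0$, contributing $C_{\Phi}(1-2\lambda)\log X$ to $I_{4}$ with a sign that \emph{cancels} part of the diagonal's $C_{\Phi}\log X$, leaving $2\lambda\log X$; for $\lambda>\tfrac12$ one shifts right and $I_{4}$ is a power saving. Your description of the swap "overtaking" the diagonal has the wrong sign and would give $\log X+2\lambda\log X$ rather than $\min$-type saturation. Finally, you omit the two cross terms (first piece of one approximate functional equation against the second piece of the other); these are the integrals $I_{2},I_{3}$, they carry their own bifurcations at $\lambda=\tfrac12$ and $\lambda=1$, and they supply the $\pm\pi^{2}\tilde{\Phi}(1/2)^{2}$ contributions without which neither $\Delta_{\Phi}$ nor $K_{\Phi}$ comes out right.
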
 

\begin{figure}[h]
\includegraphics[width=8cm, height=6cm]{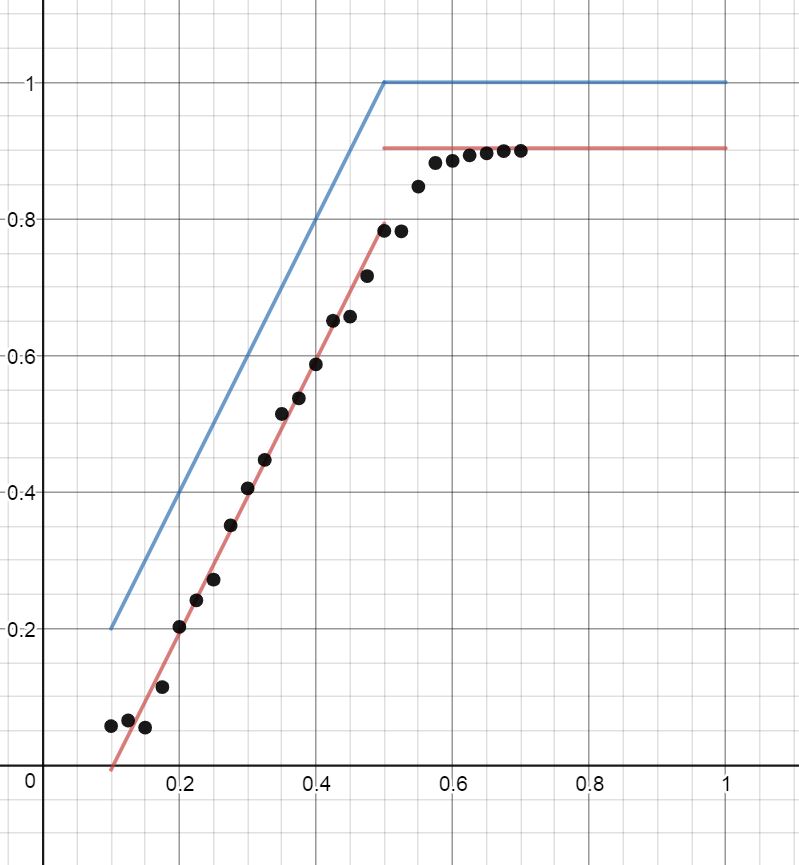} \label{abillionprimes}
\caption{ A plot of the ratio $\textnormal{Var}(\psi_{K,X})/(\langle \psi_{K,X} \rangle \log X)$ versus $\lambda = \log K / \log X$, for $X \approx 10^9$ with test functions $\Phi = 1_{(0,1]}$ and $f = 1_{[-\frac{1}{2},\frac{1}{2}]}$.  The red line is the prediction given by Conjecture \ref{conj}, while the blue line is the RMT Conjecture of (\ref{RMT Conjecture}).}
\centering
\end{figure}

Conjecture \ref{conj} provides a refined conjecture for $\textnormal{Var}(\psi_{K,X})$ with a power saving error term (away from the bifurcation points).  It moreover recovers the asymptotic prediction given by (\ref{RMT Conjecture}), which was initially obtained by completely different methods.  Numerical data for $\textnormal{Var}(\psi_{K,X})$ is provided in Figure ~1.\\
\\  A saturation effect similar to the one above was previously observed by Bui, Keating, and Smith \cite{BuiKeatingSmith}, when computing the variance of sums in short intervals of coefficients of a fixed $L$-function of high degree.  There, too, the contribution from lower order terms must be taken into account in order to obtain good agreement with the numerical data.\\
\\
A proof of Theorem \ref{trivial regime theorem} is provided in Section 2 below.  When $\lambda >1$ the main contribution to the variance is given by the diagonal terms, which we directly compute by separately considering the weighted contribution of split primes (Lemma \ref{Split Primes}) and inert primes (Lemma \ref{Inert Primes}).  When $0 < \lambda < 1$ we may no longer trivially bound the off-diagonal contribution, and so we instead shift focus to the study of a relevant family of Hecke $L$-functions.  In Section 3 we compute the ratios recipe for this family of $L$-functions, and in Section 4 we apply several necessary simplifications.  Section 5 then relates the output of this recipe to $\textnormal{Var}(\psi_{K,X})$, resulting in Conjecture \ref{full conjecture}, which expresses $\textnormal{Var}(\psi_{K,X})$ in terms of four double contour integrals.  Section 6 is dedicated to preliminary technical lemmas, and the double integrals are then computed in Sections 7$ - $9.  One finds that the main contributions to $\textnormal{Var}(\psi_{K,X})$ come from second-order poles, while first-order poles contribute a correction factor smaller than the main term by a factor of $\log X$.\\
\\
The Ratios Conjectures moreover suggests an enlightening way to group terms.  The first integral, which corresponds to taking the first piece of each approximate functional equation in the ratios recipe, corresponds to the contribution of the diagonal terms, computed in Theorem \ref{trivial regime theorem}.  In particular, we note that its contribution to $\textnormal{Var}(\psi_{K,X})$ is independent of the value of $\lambda$ (Lemma \ref{1st total}).  In contrast, the contribution emerging from the second and third integrals depends on the value of $\lambda$ (Lemma \ref{second total}).  This accounts for the emergence of two bifurcation points in the lower order terms: one at $\lambda = 1/2$ and another at $\lambda = 1$.  The fourth integral, corresponding to taking the second piece of each approximate functional equation in the ratios recipe, only makes a significantly contribution to $\textnormal{Var}(\psi_{K,X})$ when $\lambda < 1/2$ (Lemma \ref{fourth total}).  This accounts for the bifurcation point in the main term, previously detected by the RMT model, as well as for the contribution of a complicated lower-order term, which appears to nicely fit the numerical data. \\
\\
\textbf{Acknowledgments:} This work emerged from a summer project developed and guided by E. Waxman, as part of the 2017 SMALL  Undergraduate Research Project at Williams College.  We thank Zeev Rudnick for advice, and for suggesting this problem, as well as Bingrong Huang and J. P. Keating for helpful discussions.  The summer research was supported by  NSF Grant DMS1659037.  Chen was moreover supported by Princeton University, and Miller was supported by NSF Grant DMS1561945.  Waxman was supported by the European Research Council under the European Union's Seventh Framework Programme (FP7/2007-2013) / ERC grant agreement no 320755., as well as by the Czech Science Foundation GA\v CR, grant 17-04703Y.
\section{Proof of Theorem \ref{trivial regime theorem}}
Recall that $X^{\lambda} = K$.  To compute $\textnormal{Var}(\psi_{K,X})$ in the regime $\lambda > 1$, it suffices to calculate the second moment, defined as
\begin{align}
\begin{split}
\Omega_{K,X} &:= \frac{1}{\pi/2}\int_{0}^{\frac \pi 2}\bigg|\psi_{K,X}(\theta)\bigg|^{2}d\theta\\
&= \frac{2}{\pi} \sum_{\substack{\A, \B \subset \Z[i]\\ }} \Phi \left(\frac{N(\A)}{X}\right)\Phi \left(\frac{N(\B)}{X}\right) \Lambda(\A)\Lambda(\B) \int_{0}^{\frac \pi 2} F_K(\theta_{\A}  -\theta)F_K(\theta_{\B}  -\theta)d\theta.\\
\end{split}
\end{align}
Indeed, note that as in Lemma 3.1 of \cite{RudWax}, 
\begin{equation}
\langle\psi_{K,X}\rangle \sim \frac{X}{K}\int_{\R}f(x)dx\int_{0}^{\infty}\Phi(u)du\\
 = O\left(\frac{X}{K}\right),
\end{equation}
so that for $\lambda > 1$,
\begin{align} \label{var vs. second moment}
\begin{split}
\textnormal{Var}(\psi_{K,X}) &= \Omega_{K,X} - \langle\psi_{K,X}\rangle^{2}\\
&= \Omega_{K,X} + O\left(X^{1-\epsilon}\right),
\end{split}
\end{align}
where $\epsilon = 2\lambda-1$.\\
\\
Suppose $\A \neq \B$, and that at least one of $\theta_{\A}, \theta_{\B} \neq 0$.  Then by Lemma 2.1 in \cite{RudWax},
\begin{equation}
|\theta_{\A}-\theta_{\B}| \geq \frac{1}{X} \gg \frac{1}{K}.
\end{equation}
Moreover, in order for the integral
\begin{equation}
\int_{0}^{\pi/2} F_K(\theta_{\A}  -\theta)F_K(\theta_{\B}  -\theta)d\theta
\end{equation}
to be nonzero, we require that $\theta_{\A} - \theta_{\B}< \frac{\pi}{2 K}$.  Since $X = o(K)$, such off-diagonal terms contribute nothing, and the contribution thus only comes from terms for which $\theta_{\A} = \theta_{\B}$.  We therefore may write
\begin{align} 
\begin{split}
\Omega_{K,X} &= \frac{2}{\pi}\sum_{\substack{\A \subset \Z[i] \\ \theta_{\A}\neq 0}} \Phi \left(\frac{N(\A)}{X}\right)^2 \Lambda^2(\A)\int_{0}^{\frac \pi 2} F_K(\theta)^2d\theta\\
&\phantom{=}+\frac{2}{\pi}\bigg |\sum_{\substack{\A \subset \Z[i] \\ \theta_{\A} = 0}} \Phi \left(\frac{N(\A)}{X}\right) \Lambda(\A)\bigg |^2 \int_{0}^{\frac \pi 2} F_K(\theta)^2d\theta.
\end{split}
\end{align}
By Parseval's theorem we have that for sufficiently large $K$,
\begin{align}\label{fourier simplification}
\begin{split}
\frac{2}{\pi}\int_{0}^{\frac \pi 2} \vert F_K(\theta)\vert^2d\theta &= \sum_{k \in \Z}\vert \widehat{F}_K(k)\vert^2d\theta= \frac{1}{K^{2}}\sum_{k \in \Z}\widehat{f}\left(\frac k K \right)^2 = 4\pi ^2 \frac {C_{f}}{K},
\end{split}
\end{align}
and therefore
\begin{align} \label{second moment}
\begin{split}
\Omega_{K,X} &= 4\pi ^2 \frac {C_{f}}{K}\left(\sum_{\substack{\A \subset \Z[i] \\ \theta_{\A}\neq 0}} \Phi \left(\frac{N(\A)}{X}\right)^2 \Lambda^2(\A)+\bigg |\sum_{\substack{\A \subset \Z[i] \\ \theta_{\A} = 0}} \Phi \left(\frac{N(\A)}{X}\right) \Lambda(\A)\bigg |^2\right).
\end{split}
\end{align}
Theorem \ref{trivial regime theorem} then follows from (\ref{var vs. second moment}), (\ref{second moment}), 
and the following two lemmas.
\begin{lemma}\label{Split Primes}
We have
\begin{equation}\label{error no grh}
\sum_{\substack{\A \subset \Z[i] \\ \theta_{\A}\neq 0}} \Phi \left(\frac{N(\A)}{X}\right)^2 \Lambda^2(\A) = \frac{1}{4\pi^2}\bigg(C_{\Phi}X\cdot \log X-X C'_{\Phi}\bigg)+O_{\Phi}\left(Xe^{-c \cdot \sqrt{\log X}}\right),
\end{equation}
while under GRH, the error term has a power saving, say, to $O_{\Phi}\left(X^{2/3}\right)$.
\end{lemma}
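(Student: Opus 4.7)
The plan is to classify the ideals in the sum, reduce to degree-one primes lying over rational primes $p \equiv 1 \pmod 4$, and then apply the Prime Number Theorem in arithmetic progressions modulo $4$. Since $\Lambda$ is supported on prime powers $\A = \p^r$, the constraint $\theta_\A \neq 0$ restricts $\p$ to either (i) a prime ideal above a split rational prime $p \equiv 1 \pmod 4$, yielding a conjugate pair $\p, \overline{\p}$ of norm $p$, all of whose powers have nonzero angle modulo $\pi/2$; or (ii) the ramified prime $(1+i)$ of norm $2$, whose odd powers have angle $\pi/4$. Inert primes $p \equiv 3 \pmod 4$ contribute the ideal $(p)$ on the real axis and are excluded. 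A trivial estimate shows the $r \geq 2$ contribution is $\ll_\Phi \sqrt{X}(\log X)^2$, and the ramified prime contributes $\ll_\Phi \log X$; both are absorbed into the error term.

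It thus remains to evaluate
$$S \ := \ 2 \sum_{\substack{p \text{ prime} \\ p \equiv 1 \pmod 4}} \Phi(p/X)^2 (\log p)^2,$$
where the factor $2$ counts the pair $\p, \overline{\p}$ above each split prime. Let $\theta_1(t) := \sum_{p \leq t,\, p \equiv 1 (4)} \log p$. The classical Prime Number Theorem for progressions modulo $4$ (the only relevant nontrivial character being $\chi_4$) gives $\theta_1(t) = t/2 + O(t e^{-c\sqrt{\log t}})$ via the standard zero-free region for $L(s, \chi_4)$, and under GRH for $L(s, \chi_4)$ one has $\theta_1(t) = t/2 + O(\sqrt{t}(\log t)^2)$. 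I would write $S$ as a Stieltjes integral against the compactly supported weight $g(t) := \Phi(t/X)^2 \log t$, split off the $t/2$ portion, and handle the remainder by integration by parts:
$$S \ = \ 2\int_0^\infty g(t)\, d\theta_1(t) \ = \ \int_0^\infty g(t)\, dt \; + \; 2 \int_0^\infty g(t)\, d\bigl(\theta_1(t) - t/2\bigr).$$
A change of variable $u = t/X$ transforms the main term into
$$X \log X \int_0^\infty \Phi(u)^2\, du \; + \; X \int_0^\infty \Phi(u)^2 \log u\, du,$$
which matches the claimed principal term $\frac{1}{4\pi^2}\bigl(C_{\Phi} X \log X - X C'_{\Phi}\bigr)$ under the stated definitions of the constants.

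The principal obstacle is the error analysis for the remainder integral, which is routine but must exploit the compact support and smoothness of $\Phi$. Since $|g'(t)| \ll_\Phi (\log X)/X$ uniformly on the support $t \asymp X$ of $\Phi(\cdot/X)$, integration by parts bounds the remainder by a constant multiple of $(\log X) \cdot \sup_{t \asymp X} |\theta_1(t) - t/2|$. This yields $O_\Phi\bigl(X e^{-c'\sqrt{\log X}}\bigr)$ unconditionally, and $O_\Phi\bigl(\sqrt{X}(\log X)^3\bigr) \ll X^{2/3}$ under GRH, matching the claimed error terms.
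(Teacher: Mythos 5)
Your proof is correct and follows essentially the same route as the paper's: reduce to the degree-one split primes $p\equiv 1 \pmod 4$ (bounding higher prime powers and the ramified prime trivially), then apply partial summation against the prime number theorem for the progression $1 \bmod 4$, with the zero-free region giving the unconditional error and GRH giving the power saving. One minor remark: your main term is $X\log X\int_0^\infty\Phi(u)^2\,du + X\int_0^\infty\Phi(u)^2\log u\,du$, with a \emph{plus} sign on the second integral, which agrees with the paper's intermediate display (\ref{abel main}) but not with the minus sign in the lemma's statement --- that sign discrepancy (and the $\Phi$ versus $\Phi^2$ reading of $C_\Phi$) is internal to the paper, not a flaw in your argument.
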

\begin{lemma}\label{Inert Primes}
Unconditionally we have that
\begin{equation}
\bigg|\sum_{\substack{\A \subset \Z[i] \\ \theta_{\A} = 0}} \Lambda(\A) \Phi\left(\frac{N(\A)}{X}\right)\bigg|^{2} = \frac{X}{4} \left(\tilde{\Phi}\left(\frac 1 2\right)\right)^{2}+O_{\Phi}\left(Xe^{-c \cdot \sqrt{\log X}}\right),
\end{equation}
while, again, under GRH, the error term has a power saving.
\end{lemma}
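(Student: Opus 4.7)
My plan is to reduce this to a sum over rational primes $p \equiv 3 \pmod 4$ with $k=1$, apply the prime number theorem in arithmetic progressions, and then square.

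First I would classify the prime-power ideals $\A \subset \Z[i]$ with $\theta_{\A} = 0$. Such an ideal must admit a generator on the positive real axis, forcing $\A = (n)$ for a positive integer $n$. Since rational primes $p \equiv 1 \pmod 4$ split as $\p\bar{\p}$ (and thus generate an ideal that is not a prime power), the ideals $(n)$ that are prime powers in $\Z[i]$ come only from $n = p^k$ with $p \equiv 3 \pmod 4$ or $p = 2$. The inert case gives $\A = \p^k$ with $N(\p) = p^2$, $\Lambda(\A) = 2\log p$, $N(\A) = p^{2k}$; the ramified case gives $\A = (1+i)^{2k}$ with $\Lambda(\A) = \log 2$, $N(\A) = 4^k$. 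The compact support of $\Phi$ allows only $O(1)$ values of $k$ in the ramified sum (total contribution $O(1)$), while Chebyshev's bound applied to each higher power $k \geq 2$ for $p \equiv 3 \pmod 4$ gives $\sum_{k \geq 2} X^{1/(2k)} \ll X^{1/4}$. Consequently
\begin{equation*}
S(X) \ :=\ \sum_{\substack{\A \subset \Z[i] \\ \theta_{\A} = 0}} \Lambda(\A)\, \Phi\!\left(\frac{N(\A)}{X}\right) \ =\ \sum_{p \equiv 3 \pmod 4} 2\log p \cdot \Phi\!\left(\frac{p^2}{X}\right) + O(X^{1/4}).
\end{equation*}

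Next I would extract the asymptotic of the remaining sum by partial summation against $\vartheta_3(y) := \sum_{p \leq y,\ p \equiv 3 \pmod 4} \log p$, which by Siegel--Walfisz satisfies $\vartheta_3(y) = y/2 + E(y)$ with $E(y) \ll y \exp(-c\sqrt{\log y})$ unconditionally, and $E(y) \ll \sqrt{y}\log^2 y$ under GRH. The main term then reduces to
\begin{equation*}
\int_{0}^{\infty} \Phi(t^2/X)\, dt \ =\ \tfrac{1}{2}\sqrt{X}\,\tilde{\Phi}(1/2)
\end{equation*}
after the substitution $u = t^2/X$, while the error piece, after integration by parts and using that the integrand is concentrated on the window $t \asymp \sqrt{X}$, contributes $O_{\Phi}(\sqrt{X}\exp(-c\sqrt{\log X}))$ unconditionally (or $O_{\Phi}(X^{1/4 + \epsilon})$ under GRH).

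Finally, squaring the resulting asymptotic
\begin{equation*}
S(X) \ =\ \tfrac{1}{2}\sqrt{X}\,\tilde{\Phi}(1/2) \ +\ O_{\Phi}\!\left(\sqrt{X}\,\exp(-c\sqrt{\log X})\right)
\end{equation*}
yields the claim: the cross term dominates, giving overall error $O_\Phi(X\exp(-c\sqrt{\log X}))$, which also absorbs the $O(X^{1/4})$ from higher prime powers, and under GRH the squared error becomes $O_{\Phi}(X^{3/4 + \epsilon})$, a power saving. The only delicate point, beyond bookkeeping, is confirming that the Siegel--Walfisz-type remainder survives the partial summation with no worse loss than $\exp(-c\sqrt{\log X})$; this is immediate once one restricts attention to the dyadic window $t \asymp \sqrt{X}$ dictated by the compact support of $\Phi$, where $E(t) \ll \sqrt{X}\exp(-c'\sqrt{\log X})$.
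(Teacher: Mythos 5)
Your proof is correct, and your opening reduction is exactly the paper's: the ideals with $\theta_{\A}=0$ and $\Lambda(\A)\neq 0$ are the powers of inert primes $p\equiv 3\ (4)$ together with even powers of $(1+i)$, the ramified and higher-power contributions are negligible, and one is left with $\sum_{p\equiv 3(4)}2\log p\,\Phi(p^2/X)$. Where you genuinely diverge is in extracting the asymptotic of that sum. The paper first replaces the prime sum by $2\sum_{n\equiv 3(4)}\Lambda(n)\Phi(n^2/X)$, writes the indicator of $n\equiv 3\ (4)$ via $\chi_0-\chi_1$, and evaluates the resulting Mellin integral of $\frac{L'}{L}(2s,\chi_1)-\frac{L'}{L}(2s,\chi_0)$ by shifting the contour past the pole of $\frac{L'}{L}(s,\chi_0)$ at $s=1$, which yields the residue $\tfrac12 X^{1/2}\tilde{\Phi}(\tfrac12)$ plus an error governed by the zero-free region (or GRH). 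You instead apply partial summation against $\vartheta(y;4,3)=y/2+E(y)$ and push the remainder through the derivative of $\Phi(t^2/X)$, localized to $t\asymp\sqrt{X}$ by the compact support of $\Phi$. The two routes are equivalent in strength, since both rest on the same information about $L(s,\chi)$ mod $4$ (for the fixed modulus $4$ you only need de la Vall\'ee Poussin's error term, so invoking Siegel--Walfisz is harmless overkill), and both give $S(X)=\tfrac12\sqrt{X}\,\tilde{\Phi}(\tfrac12)+O_{\Phi}(\sqrt{X}e^{-c\sqrt{\log X}})$, whose square is the claim; your factor bookkeeping (the $2$ from $\Lambda(\p^k)=2\log p$ cancelling the density $1/2$) checks out. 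Your version is slightly more elementary and self-contained; the paper's Mellin/contour formulation has the advantage of matching the $\tilde{\Phi}$-and-contour formalism used throughout the rest of the paper and in Lemma \ref{Split Primes}.
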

\textbf{Proof of Lemma \ref{Split Primes}:}
\begin{proof}
Consider the quantity
\begin{align}
\begin{split}
\sum_{\substack{\A \subset \Z[i] \\ \theta_{\A}\neq 0}}& \Phi \left(\frac{N(\A)}{X}\right)^2 \Lambda^2(\A) = \sum_{\p|p\equiv 1(4)}\sum_{n=1}^{\infty} \Phi \left(\frac{N(\p^{n})}{X}\right)^2 \Lambda^2(\p)+\sum_{m=0}^{\infty}\Phi \left(\frac{2^{2m+1}}{X}\right)^2 (\log 2)^2\\
&=\sum_{p \equiv 1(4)}2\cdot \Phi \left(\frac{p}{X}\right)^2 (\log p)^2+\sum_{\p|p\equiv 1(4)}\sum_{n=2}^{\infty} \Phi \left(\frac{N(\p^{n})}{X}\right)^2 \Lambda^2(\p)+O_{\Phi}\left(\log X\right),
\end{split}
\end{align}
where we note that since $\Phi$ is compactly supported, the sum on the far right has at most $O_{\Phi}\left(\log X\right)$ terms.  Moreover,
\begin{align}
\sum_{\p|p\equiv 1(4)}\Phi \left(\frac{N(\p^{n})}{X}\right)^2 \Lambda^2(\p)&\ll X^{\frac 1 n+\epsilon}
\end{align}
since the sum has at most $O_{\Phi}(X^{1/n})$ terms.  It follows that
\begin{equation}
\sum_{\p|p\equiv 1(4)}\sum_{n=2}^{\infty} \Phi \left(\frac{N(\p^{n})}{X}\right)^2 \Lambda^2(\p) \ll \sum_{n=2}^{\log X} X^{\frac{1}{n}+\epsilon}= O_{\Phi}\left(X^{\frac{2}{3}}\right),
\end{equation}
and therefore
\begin{equation}\label{split simplification}
\sum_{\substack{\A \subset \Z[i] \\ \theta_{\A}\neq 0}} \Phi \left(\frac{N(\A)}{X}\right)^2 \Lambda^2(\A) = \sum_{p \equiv 1(4)}2\cdot \Phi \left(\frac{p}{X}\right)^2 (\log p)^2+O_{\Phi}\left(X^{\frac{2}{3}}\right).
\end{equation}
Upon setting
\begin{equation}
f(t) := \log t \cdot \Phi\left(\frac{t}{X}\right)^2
\end{equation}
and
\begin{equation}
a_{p} := \left\{
\begin{array}{l l}
2\cdot \log p & \text{ if } p \equiv 1(4) \\
0 & \text{ otherwise,}
\end{array} \right.
\end{equation}
it follows from Abel's Summation Formula and the Prime Number Theorem that
\begin{align}\label{abel split}
\begin{split}
 \sum_{p \equiv 1(4)}2\cdot \Phi \left(\frac{p}{X}\right)^2 (\log p)^2
&= \int_{1}^{\infty}\log t \cdot \Phi\left(\frac{t}{X}\right)^2 dt+O\left(\int_{1}^{\infty}t^{\frac 1 2 +\epsilon}\cdot f'(t)dt\right).
\end{split}
\end{align}
where the error term assumes RH.  Applying the change of variables $u :=t/X$, we then obtain that for sufficiently large $X$,
\begin{align}\label{abel main}
\begin{split}
\int_{1}^{\infty}\log t \cdot \Phi\left(\frac{t}{X}\right)^2 dt &= X\cdot \log X \int_{0}^{\infty} \Phi\left(u\right)^2 du+X\cdot \int_{0}^{\infty}\log u\cdot  \Phi\left(u\right)^2 du\\
&= \frac{1}{4\pi^2}\bigg(X\cdot \log X C_{\Phi}-X C'_{\Phi}\bigg).
\end{split}
\end{align}
Under RH, the error term is then given as
\begin{align}\label{abel error}
\begin{split}
\int_{1}^{\infty}t^{\frac 1 2+\epsilon}\cdot f'(t)dt & \ll \int_{1}^{\infty}t^{-\frac 1 3} \cdot \Phi\left(\frac{t}{X}\right)^2 dt\ll_{\Phi} X^{\frac 2 3},
\end{split}
\end{align}
while unconditionally it is as in (\ref{error no grh}).  Combining the results of (\ref{split simplification}), (\ref{abel split}), (\ref{abel main}), and (\ref{abel error}), we then obtain Lemma \ref{Split Primes}.
\end{proof}
\textbf{Proof of Lemma \ref{Inert Primes}:}
\begin{proof}
Next, we consider the quantity
\begin{align}
\begin{split}
\sum_{\substack{\A \subset \Z[i] \\ \theta_{\A} = 0}} \Phi \left(\frac{N(\A)}{X}\right) \Lambda(\A) &= 2\sum_{p \equiv 3(4)}\sum_{j=1}^{\infty} \Phi \left(\frac{p^{2j}}{X}\right) \log p+\sum_{m=1}^{\infty} \Phi \left(\frac{2^{2m}}{X}\right) \log 2\\
&= 2\sum_{p \equiv 3(4)}\sum_{j=1}^{\infty} \Phi \left(\frac{p^{2j}}{X}\right) \log p+O_{\Phi}\left(\log X\right).
\end{split}
\end{align}

Since
\begin{equation}
\sum_{p \equiv 3(4)}\Phi \left(\frac{p^{2j}}{X}\right) \log p \ll_{\Phi} X^{\frac{1}{2j}+\epsilon},
\end{equation}
we have that
\begin{equation}
\sum_{p \equiv 3(4)}\sum_{j=2}^{\infty} \Phi \left(\frac{p^{2j}}{X}\right) \log p \ll_{\Phi} (\log X)\cdot X^{\frac{1}{4}+\epsilon}=O_{\Phi}\left( X^{\frac 1 3}\right),
\end{equation}
and therefore
\begin{equation}
\sum_{\substack{\A \subset \Z[i] \\ \theta_{\A} = 0}} \Phi \left(\frac{N(\A)}{X}\right) \Lambda(\A) = 2 \sum_{p \equiv 3(4)}\Phi \left(\frac{p^2}{X}\right) \Lambda (p)+O_{\Phi}\left(X^{\frac{1}{3}}\right).
\end{equation}
Moreover, since
\begin{align}
\begin{split}
\sum_{n \equiv 3(4)}\Phi \left(\frac{n^2}{X}\right) \Lambda (n) &=\sum_{p \equiv 3(4)}\Phi \left(\frac{p^2}{X}\right) \Lambda (p)+ \sum_{p \equiv 3(4)}\sum_{\substack{j=3 \\ \textnormal{odd}}}^{\infty}\Phi \left(\frac{p^{2j}}{X}\right) \Lambda (p)\\
&=\sum_{p \equiv 3(4)}\Phi \left(\frac{p^2}{X}\right) \Lambda (p)+ O_{\Phi}\left(X^{\frac{1}{3}}\right),
\end{split}
\end{align}
we obtain
\begin{equation}
\sum_{\substack{\A \subset \Z[i] \\ \theta_{\A} = 0}} \Phi \left(\frac{N(\A)}{X}\right) \Lambda(\A) = 2\sum_{n \equiv 3(4)}\Phi \left(\frac{n^2}{X}\right) \Lambda (n)+O_{\Phi}\left(X^{\frac{1}{3}}\right).
\end{equation}
By the Mellin inversion theorem, we find that
\begin{align}
\begin{split}
\sum_{n \equiv 3(4)}\Phi \left(\frac{n^2}{X}\right) \Lambda (n) & =\sum_{n \equiv 3(4)}  \Lambda (n) \frac{1}{2\pi i}\int_{(2)}\tilde{\Phi}(s)\left(\frac{n^2}{X}\right)^{-s}ds \\
& = \frac{1}{2\pi i}\int_{(2)}\tilde{\Phi}(s)\sum_{n \equiv 3(4)}\frac{\Lambda (n)}{n^{2s}}X^{s}ds.
\end{split}
\end{align}
Let $\chi_{0} \in \left(\Z/4\Z\right)^{\times}$ denote the principal character, and $\chi_{1} \in \left(\Z/4\Z\right)^{\times}$ denote the non-principal character, with corresponding $L$-functions given by $L(s, \chi_{0})$ and $L(s, \chi_{1})$, respectively.  Upon noting that
\begin{equation}
\chi_{0}(n)-\chi_{1}(n)=
\left\{
\begin{array}{l l}
2 & \text{ if } n = 3 \text{ mod }4 \\
0 & \text{otherwise}, \\
\end{array} \right.
\end{equation}
we obtain
\begin{align}
\begin{split}
\frac{L'}{L}(2s,\chi_1)-\frac{L'}{L}(2s,\chi_0) &=\sum_{n=1}^{\infty}\frac{\Lambda(n)(\chi_{0}(n)-\chi_{1}(n))}{n^{2s}}\\
&=2\sum_{n\equiv 3(4)}^{\infty}\frac{\Lambda(n)}{n^{2s}}.
\end{split}
\end{align}
It follows that
\begin{align}\label{integral2s}
\begin{split}
2\sum_{n \equiv 3(4)}\Phi \left(\frac{n^2}{X}\right) \Lambda (n) &= \frac{1}{2\pi i}\int_{(2)}\left(\frac{L'}{L}(2s,\chi_{1})-\frac{L'}{L}(2s,\chi_{0})\right)\tilde{\Phi}(s)X^s ds\\
&= \frac{1}{4\pi i}\int_{(4)}\left(\frac{L'}{L}(s,\chi_{1})-\frac{L'}{L}(s,\chi_0)\right)\tilde{\Phi}\left(\frac s 2\right)X^\frac{s}{2} ds.
\end{split}
\end{align}
Moreover, we compute
\begin{equation}
\frac{L'}{L}(s,\chi_{0}) = -\frac{1}{s-1}+\gamma_{0}+\log 2 +O(s-1),
\end{equation}
where $\gamma_{0}$ is the Euler-Mascheroni constant, while $L'/L(s,\chi_1)$ is holomorphic about $s=1$.  Shifting integrals, we pick up a pole at $s = 1$ and find that 
\begin{equation}
\sum_{\substack{\A \subset \Z[i] \\ \theta_{\A} = 0}} \Lambda(\A) \Phi\left(\frac{N(\A)}{X}\right) = \frac{1}{2} X^{\frac{1}{2}}\tilde{\Phi}\left(\frac 1 2\right)+O_{\Phi}\left(\sqrt{X}e^{-c \cdot \sqrt{\log X}}\right)
\end{equation}
for some $c > 0$.  Squaring this then yields
\begin{equation}
\left|\sum_{\substack{\A \subset \Z[i] \\ \theta_{\A} = 0}} \Lambda(\A) \Phi\left(\frac{N(\A)}{X}\right)\right|^{2} = \frac{X}{4} \left(\tilde{\Phi}\left(\frac 1 2\right)\right)^{2}+O_{\Phi}\left(Xe^{-c \cdot \sqrt{\log X}}\right).
\end{equation}
As above, we note that under the assumption of GRH the error term can be improved to have a power-saving.
\end{proof}
\section{Implementing the Ratios Conjecture}
Throughout this section, and the remainder of the paper, we will assume GRH.  
\subsection{The Recipe}
The $L$-Functions Ratios Conjecture described in \cite{Conrey}, provides a procedure for computing an average of $L$-function ratios over a designated family.  Let $\mathcal{L}(s,f)$ be an $L$-function, and $\mathcal{F} =\{f\}$ a family of characters with conductors $c(f)$, as defined in section 3 of \cite{ConreyII}.  $\mathcal{L}(s,f)$ has an approximate functional equation given by
\begin{equation}\label{approx equation}
\mathcal{L}(s,f) = \sum_{n<x}\frac{A_{n}(f)}{n^s}+\epsilon(f,s)\sum_{m <y} \frac{\overline{A_{m}(f)}}{m^{1-s}}+\textnormal{remainder}.
\end{equation}
Moreover, one may write
\begin{equation} \label{denominator}
\frac{1}{\mathcal{L}(s,f)} = \sum_{n=1}^{\infty}\frac{\mu_{f}(n)}{n^s},
\end{equation}
where the series converges absolutely for Re$(s)>1$.
To conjecture an asymptotic formula for the average
\begin{equation}
\sum_{f \in \mathcal{F}}\frac{\mathcal{L}(\frac{1}{2} +\alpha,f)\mathcal{L}(\frac{1}{2} + \beta,f)}{\mathcal{L}(\frac{1}{2}+ \gamma,f)\mathcal{L}(\frac{1}{2} + \delta,f)},
\end{equation}
the \textit{Ratios Conjecture} suggests the following recipe.\\
\\
\textbf{Step One}: Start with
\begin{equation}
\frac{\mathcal{L}(\frac{1}{2}+ \alpha,f)\mathcal{L}(\frac{1}{2} + \beta,f)}{\mathcal{L}(\frac{1}{2}+ \gamma,f)\mathcal{L}(\frac{1}{2} + \delta,f)}.
\end{equation}
Replace each $L$-function in the numerator with the two terms from its approximate functional equation, ignore the remainder terms and allow each of the four resulting sums to extend to infinity.  Replace each $L$-function in the denominator by its series (\ref{denominator}).  Multiply out the resulting expression to obtain 4 terms.  Write these terms as 
\begin{equation}(\text{product of }\epsilon(f,s) \text{ factors})\sum_{n_{1},\dots, n_{4}}(\text{summand}).
\end{equation}
\textbf{Step Two}: Replace each product of $\epsilon(f,s)$ factors by its expected value when averaged over the family.\\
\\
\textbf{Step Three}: Replace each summand by its expected value when averaged over the family.\\
\\
\textbf{Step Four}: Call the total $M_{f}:=M_{f}(\alpha,\beta,\gamma,\delta)$, and let $F = |\mathcal{F}| $.  Then for 
\begin{equation}\label{ratios domain}
-\frac{1}{4}<\textnormal{Re}(\alpha),\textnormal{Re}(\beta)< \frac{1}{4}, \hspace{10mm}
\frac{1}{\log F}\ll \textnormal{Re}(\gamma), \textnormal{Re}(\delta)<\frac{1}{4},
\end{equation}
and
\begin{equation}\label{ratios domain 2}
\textnormal{Im}(\alpha),\textnormal{Im}(\beta),\textnormal{Im}(\gamma),\textnormal{Im}(\delta) \ll_{\epsilon}F^{1-\epsilon},
\end{equation}
the conjecture is that
\begin{equation}
\sum_{f \in \mathcal{F}}\frac{\mathcal{L}(\frac{1}{2} + \alpha,f)\mathcal{L}(\frac{1}{2} + \beta,f)}{\mathcal{L}(\frac{1}{2} + \gamma,f)\mathcal{L}(\frac{1}{2} + \delta,f)}g(c(f))=\sum_{f \in \mathcal{F}}M_{f}\left(1+O\left(e^{(-\frac{1}{2}+\epsilon)c(f)}\right)\right)g(c(f))
\end{equation}
for all $\epsilon > 0$, where $g$ is a suitable weight function.
\subsection{Hecke $L$-functions}
We are interested in applying the ratios recipe to the following family of $L$-functions.  Consider the Hecke character

\begin{equation}
\Xi_{k}(\A):= \left(\alpha/ \overline{\alpha}\right)^{2k} = e^{i 4k \theta_{\A}}, \hspace{5mm} k \in \Z,
\end{equation}
which provides a well-defined function on the ideals of $\Z[i]$.  To each such character we may associate an $L$-function
\begin{align}
L_{k}(s) &:= \ \sum_{\substack{\A \subseteq \Z[i]\\ \A \neq 0}}\frac{\Xi_{k}(\A)}{N(\A)^{s}}=\prod_{\p \textnormal{ prime}}\left(1-\frac{\Xi_{k}(\p)}{N(\p)^s}\right)^{-1}, \hspace{5mm} \textnormal{Re}(s)>1.
\end{align}
Note that $L_{k}(s) = L_{-k}(s)$, and that

\begin{equation}\label{cmplxconj}
\overline{\frac{L'_{k}}{L_{k}}(s)}\ = \ -\sum_{\A \neq 0} \overline{\frac{\Lambda(\A) \Xi_{k}(\A)}{\overline{N(\A)^{s}}}}\ = \ - \sum_{\A \neq 0} \frac{\Lambda(\A)\overline{\Xi_{k}(\A)}}{N(\A)^{\overline{s}}}\ = \ \frac{L'_{-k}}{L_{-k}}(\overline{s})= \frac{L'_{k}}{L_{k}}(\overline{s}).
\end{equation}
Moreover, when $k\neq 0$, then $L_{k}(s)$ has an analytic continuation to the entire complex plane, and satisfies the \textit{functional equation}
\begin{equation}
\xi_{k}(s):=\pi^{-(s+|2k|)}\cdot \Gamma(s+|2k|)\cdot L_{k}(s)=\xi_{k}(1-s).
\end{equation}

\subsection{Step One: Approximate Function Equation}
We seek to apply the above procedure to compute the average

\begin{equation}
\sum_{k \neq 0}\bigg | \widehat{f}\left( \frac{k}{K}\right) \bigg |^{2}\frac{L_{k}(\frac 1 2+\alpha)L_{k}(\frac 1 2+\beta)}{L_{k}(\frac 1 2+\gamma)L_{k}(\frac 1 2+\delta)}
\end{equation}
for specified values of $\alpha, \beta,\gamma, \delta$.  For this particular family of $L$-functions, we have
\begin{equation}
\epsilon(f,s) := \frac{L_{k}(s)}{L_{k}(1-s)}=\pi^{2s-1}\cdot \frac{\Gamma(1-s+|2k|)}{\Gamma(s+|2k|)},
\end{equation}
and
\begin{align}
A_{k}(n) &= \sum_{\substack{N(\A) = n}}\Xi_{k}(\A),
\end{align}
which is a multiplicative function defined explicitly on prime powers by
\begin{equation}\label{coefficients}
A_{k}(p^{l}) = \left\{
\begin{array}{l l}
\sum_{j=-l/2}^{l/2}e^{2j4ki\theta_{p}} & \text{ if } p \equiv 1(4), l \text{ even}\\
\sum_{j=-(l+1)/2}^{(l-1)/2}e^{(2j+1)4ki\theta_{p}} & \text{ if } p \equiv 1(4), l \text{ odd}\\
0 & \text{ if } p \equiv 3(4), l \textnormal{ odd }\\
1 & \text{ if } p \equiv 3(4), l \textnormal{ even } \\
(-1)^{lk} & \text{ if } p = 2,
\end{array} \right.
\end{equation}
where, for prime $p \equiv 1 (4)$, we define $\theta_{p} := \theta_{\p}$, where $\p \subset \Z[i]$ is a prime ideal lying above $p$.  Note, moreover, that the above formula is independent of our specific choice of $\p$.\\
\\
 As per the recipe, we ignore the remainder term and allow both terms in the approximate functional equation to be summed to infinity.  This allows us to write
\begin{equation}
L_{k}(s) \approx \sum_{n}\frac{A_{k}(n)}{n^s}+\pi^{2s-1}\cdot \frac{\Gamma(1-s+|2k|)}{\Gamma(s+|2k|)}\sum_{m} \frac{A_{k}(m)}{m^{1-s}},
\end{equation}
upon noting that $\overline{A_{k}(n)}=A_{k}(n)$ for all $A_{k}(n)$.\\
\\
To compute the inverse coefficients, write

\begin{align}
\begin{split}
\frac{1}{L_k(s)}&=\prod_{\p} \left(1-\frac{e^{4ki \theta_{\p}}}{N(\p)^s}\right)\\
&=\left(1-\frac{(-1)^k}{2^{s}}\right)\prod_{p \equiv 1(4)} \left(1-\frac{(e^{4ki\theta_p}+e^{-4ki\theta_p})}{p^{s}}+\frac{1} {p^{2s}}\right)\prod_{p\equiv 3(4)} \left(1-\frac 1 {p^{2s}}\right)
\\
&=\left(1-\frac{A_{k}(2)}{2^{s}}\right)\prod_{p \equiv 1(4)} \left(1-\frac{A_{k}(p)}{p^{s}}+\frac{1}{p^{2s}}\right)\prod_{p\equiv 3(4)} \left(1-\frac{A_{k}(p)}{p^{s}}-\frac{A_{k}(p^2)}{p^{2s}}\right).
\end{split}
\end{align}
We then obtain
\begin{equation}\label{inverse series}
\frac{1}{L_k(s)}=\sum\limits_{h} \frac{\mu_{k}(h)}{h^s},
\end{equation}
where

\begin{align}\label{inverse coefficients}
\mu_k(p^h):=
\begin{cases}
1 & h=0\\
-A_{k}(p) & h=1
\\
-1 & h=2,p\equiv 3(4)
\\
1 & h=2, p\equiv 1(4)
\\
0 & \textnormal{otherwise}.
\end{cases}
\end{align}
Multiplying out the resulting expression gives
\begin{align}
\begin{split}
&\left(\sum\limits_{h=0}^{\infty} \frac{\mu_{k}(h)}{h^{\frac 1 2+\gamma}}\right)\left(\sum\limits_{l=0}^{\infty} \frac{\mu_{k}(l)}{l^{\frac 1 2+\delta}}\right)\times \left(\sum_{n=0}^{\infty}\frac{A_{k}(n)}{n^{\frac 1 2+\alpha}}+\pi^{2\alpha}\cdot \frac{\Gamma(\frac 1 2-\alpha+|2k|)}{\Gamma(\frac 1 2+\alpha+|2k|)}\sum_{n =0}^{\infty} \frac{A_{k}(n)}{n^{\frac 1 2-\alpha}}\right) \\
&\phantom{=}\times \left(\sum_{m=0}^{\infty}\frac{A_{k}(m)}{m^{\frac 1 2+\beta}}+\pi^{2\beta}\cdot \frac{\Gamma(\frac 1 2-\beta+|2k|)}{\Gamma(\frac 1 2+\beta+|2k|)}\sum_{m =0}^{\infty} \frac{A_{k}(m)}{m^{\frac 1 2-\beta}}\right)
\end{split}
\end{align}
\begin{align}
\begin{split}
&=\prod_{\substack{ p  }}\left(\sum_{m,n,h,l}\frac{\mu_{k}(p^h)\mu_{k}(p^l)A_{k}(p^{n})A_{k}(p^{m})}{p^{h(\frac 1 2+\gamma)+l(\frac 1 2+\delta)+n(\frac 1 2+\alpha)+m(\frac 1 2+\beta)}}\right)\\
&\phantom{=}+\pi^{2\alpha}\cdot \frac{\Gamma(\frac 1 2-\alpha+|2k|)}{\Gamma(\frac 1 2+\alpha+|2k|)}\prod_{\substack{ p  }}\left(\sum_{m,n,h,l}\frac{\mu_{k}(p^h)\mu_{k}(p^l)A_{k}(p^{n})A_{k}(p^{m})}{p^{h(\frac 1 2+\gamma)+l(\frac 1 2+\delta)+n(\frac 1 2-\alpha)+m(\frac 1 2+\beta)}}\right)\\
&\phantom{=}+\pi^{2\beta}\cdot \frac{\Gamma(\frac 1 2-\beta+|2k|)}{\Gamma(\frac 1 2+\beta+|2k|)}\prod_{\substack{ p  }}\left(\sum_{m,n,h,l}\frac{\mu_{k}(p^h)\mu_{k}(p^l)A_{k}(p^{n})A_{k}(p^{m})}{p^{h(\frac 1 2+\gamma)+l(\frac 1 2+\delta)+n(\frac 1 2+\alpha)+m(\frac 1 2-\beta)}}\right) \\
&\phantom{=}+\pi^{2(\alpha+\beta)}\cdot \frac{\Gamma(\frac 1 2-\alpha+|2k|)}{\Gamma(\frac 1 2+\alpha+|2k|)}\frac{\Gamma(\frac 1 2-\beta+|2k|)}{\Gamma(\frac 1 2+\beta+|2k|)}\\
&\phantom{=}\times\prod_{\substack{ p  }}\left(\sum_{m,n,h,l}\frac{\mu_{k}(p^h)\mu_{k}(p^l)A_{k}(p^{n})A_{k}(p^{m})}{p^{h(\frac 1 2+\gamma)+l(\frac 1 2+\delta)+n(\frac 1 2-\alpha)+m(\frac 1 2-\beta)}}\right),
\end{split} 
\end{align}
where the above follows upon noting that
\begin{align}
\begin{split}
&\left(\sum_{h=0}^{\infty}\frac{\mu_{k}(h)}{h^{\frac 1 2+\gamma}}\right)\left(\sum_{l=0}^{\infty}\frac{\mu_{k}(l)}{l^{(\frac 1 2+\delta)}}\right)\left(\sum_{n=0}^{\infty}\frac{A_{k}(n)}{n^{\frac 1 2+\alpha}}\right)\left(\sum_{m=0}^{\infty}\frac{A_{k}(m)}{m^{\frac 1 2+\beta}}\right)\\
&=\prod_{p}\left(\sum_{h}\frac{\mu_{k}(p^h)}{p^{h(\frac 1 2+\gamma)}}\right)\left(\sum_{l}\frac{\mu_{k}(p^{l})}{p^{l(\frac 1 2+\alpha)}}\right)\left(\sum_{n}\frac{A_{k}(p^{n})}{p^{n(\frac 1 2+\alpha)}}\right)\left(\sum_{m}\frac{A_{k}(p^{m})}{p^{m(\frac 1 2+\beta)}}\right)\\
& = \prod_{\substack{ p  }}\left(\sum_{m,n,h,l}\frac{\mu_{k}(p^h)\mu_{k}(p^l)A_{k}(p^{n})A_{k}(p^{m})}{p^{h(\frac 1 2+\gamma)+l(\frac 1 2+\delta)+n(\frac 1 2+\alpha)+m(\frac 1 2+\beta)}}\right).
\end{split}
\end{align}
The algorithm now dictates that we compute the $\Gamma$-average
\begin{equation}
\bigg \langle \pi^{2(\alpha+\beta)}\cdot \frac{\Gamma(\frac 1 2-\alpha+|2k|)}{\Gamma(\frac 1 2+\alpha+|2k|)}\frac{\Gamma(\frac 1 2-\beta+|2k|)}{\Gamma(\frac 1 2+\beta+|2k|)}\bigg \rangle_{K},
\end{equation}
as well as an average for the quantity coming from the first piece of each functional equation, namely 
\begin{equation}
\bigg \langle \prod_{\substack{ p  }}\left(\sum_{m,n,h,l}\frac{\mu_{k}(p^h)\mu_{k}(p^l)A_{k}(p^{n})A_{k}(p^{m})}{p^{h(\frac 1 2+\gamma)+l(\frac 1 2+\delta)+n(\frac 1 2+\alpha)+m(\frac 1 2+\beta)}}\right)\bigg \rangle_{K}.
\end{equation}
Here we write $\langle \cdot \rangle_{K}$ to denote the average over all $0< |k| \leq K$.  The average of the remaining three pieces will then follow similarly upon applying the appropriate change of variables. 
\subsection{Step Two: Averaging the Gamma Factors}
The gamma factor averages over the family of Hecke $L$-functions are provided by the following lemma.
\begin{lemma}
Fix $0 < \alpha, \beta < \frac 1 2$.  We find that 
 
\begin{align}\label{single gamma}
\bigg\langle \frac{\Gamma(\frac 1 2-\alpha+|2k|)}{\Gamma(\frac 1 2+\alpha+|2k|)}\bigg\rangle_{K} = \frac{\left(2K\right)^{-2\alpha}}{1-2\alpha} + O\left(K^{-1}\right),
\end{align}
and similarly
\begin{align}\label{double gamma}
\begin{split}
\bigg\langle \frac{\Gamma(\frac 1 2-\alpha+|2k|)}{\Gamma(\frac 1 2+\alpha+|2k|)}\frac{\Gamma(\frac 1 2-\beta+|2k|)}{\Gamma(\frac 1 2+\beta+|2k|)}\bigg\rangle_{K}&=\frac{(2K)^{-2(\alpha+\beta)}}{1-2(\alpha+\beta)}+ O\left(K^{-1}\right).
\end{split}
\end{align}

\end{lemma}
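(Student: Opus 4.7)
The plan is to reduce both averages to a real-variable sum of the form $\sum_{k=1}^{K} k^{-2s}$ using Stirling's asymptotic for the gamma ratios, and then evaluate that sum by Euler--Maclaurin (or equivalently, Abel summation against the Hurwitz zeta function).

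First I would apply the standard asymptotic
\begin{equation*}
\frac{\Gamma(x+a)}{\Gamma(x+b)} \;=\; x^{a-b}\left(1+O_{a,b}\!\left(\tfrac{1}{x}\right)\right) \quad \text{as } x \to \infty,
\end{equation*}
with $x=|2k|+\tfrac{1}{2}$, $a=-\alpha$, $b=\alpha$, to obtain
\begin{equation*}
\frac{\Gamma(\tfrac{1}{2}-\alpha+|2k|)}{\Gamma(\tfrac{1}{2}+\alpha+|2k|)} \;=\; (2k)^{-2\alpha}\bigl(1+O_\alpha(1/k)\bigr),
\end{equation*}
uniformly for $k \geq 1$ (after absorbing the $\tfrac{1}{2}$ shift into the error). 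Next I would use that the summand depends only on $|k|$ to write
\begin{equation*}
\Bigl\langle \tfrac{\Gamma(\tfrac{1}{2}-\alpha+|2k|)}{\Gamma(\tfrac{1}{2}+\alpha+|2k|)} \Bigr\rangle_{K} \;=\; \frac{1}{2K}\sum_{0<|k|\le K}(2k)^{-2\alpha}\bigl(1+O_\alpha(1/k)\bigr) \;=\; \frac{2^{-2\alpha}}{K}\sum_{k=1}^{K}k^{-2\alpha}\;+\;O_\alpha(K^{-1}),
\end{equation*}
where the error bound uses that $\sum_{k\ge 1}k^{-2\alpha-1}$ converges.

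Then I would evaluate the main sum by Euler--Maclaurin (or the Hurwitz zeta asymptotic): for $0<2\alpha<1$,
\begin{equation*}
\sum_{k=1}^{K}k^{-2\alpha} \;=\; \frac{K^{1-2\alpha}}{1-2\alpha}\;+\;\zeta(2\alpha)\;+\;O_\alpha(K^{-2\alpha}).
\end{equation*}
Dividing by $K$ and multiplying by $2^{-2\alpha}$ gives the main term $(2K)^{-2\alpha}/(1-2\alpha)$, while the zeta constant contributes $O(K^{-1})$, proving \eqref{single gamma}.

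For the double gamma average \eqref{double gamma}, I would observe that the product has the identical shape with $\alpha$ replaced by $\alpha+\beta$ after applying Stirling to each factor:
\begin{equation*}
\frac{\Gamma(\tfrac{1}{2}-\alpha+|2k|)\Gamma(\tfrac{1}{2}-\beta+|2k|)}{\Gamma(\tfrac{1}{2}+\alpha+|2k|)\Gamma(\tfrac{1}{2}+\beta+|2k|)}\;=\;(2k)^{-2(\alpha+\beta)}\bigl(1+O_{\alpha,\beta}(1/k)\bigr),
\end{equation*}
so the same Euler--Maclaurin argument yields the claimed expression. The only mild obstacle is bookkeeping the error terms uniformly in the shifts: one needs $\alpha$ (respectively $\alpha+\beta$) bounded away from $\tfrac{1}{2}$ to keep $1/(1-2\alpha)$ under control, which is guaranteed by the hypothesis and the implicit assumption that the shifts lie in the domain \eqref{ratios domain}. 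No serious analytic difficulty arises.
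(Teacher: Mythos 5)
Your proposal is correct and follows essentially the same route as the paper, which reduces each gamma ratio to $\left(\tfrac12+|2k|\right)^{-2\alpha}\left(1+O(1/k)\right)$ via Stirling and then averages the resulting power of $k$ over $0<|k|\le K$ (the paper outsources this last summation step to the reference \cite{Waxman}, whereas you carry it out explicitly with Euler--Maclaurin). Your closing remark about keeping $\alpha+\beta$ away from $\tfrac12$ is a fair caveat, but it applies equally to the paper's own statement and is handled by the domain restrictions in \eqref{ratios domain}.
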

\begin{proof}
A proof of (\ref{single gamma}) is given in \cite{Waxman}, and the proof of (\ref{double gamma}) is essentially identical.  Specifically, one may use Stirling's approximation and Taylor expansion to demonstrate that
\begin{equation}
\frac{\Gamma\left(\frac 1 2+|2k|-\alpha\right)}{\Gamma\left(\frac 1 2+|2k|+\alpha\right)}\frac{\Gamma\left(\frac 1 2+|2k|-\beta\right)}{\Gamma\left(\frac 1 2+|2k|+\beta\right)}= \left(\frac 1 2+|2k|\right)^{-2(\alpha+\beta)}\left(1+O\left(\frac{1}{k}\right)\right),
\end{equation}
and then average over $0<|k| \leq K$ to obtain (\ref{double gamma}).
\end{proof}
\subsection{Step Three: Coefficient Average}
In this section, we seek to compute the coefficient average
\begin{equation}\label{coefficient average}
\bigg \langle \mu_{k}(p^h)\mu_{k}(p^l)A_{k}(p^{n})A_{k}(p^{m})\bigg \rangle_{K}.
\end{equation}
To do so, we must consider several cases depending on the value of $p$ mod 4.  Define
\begin{align}
\delta_p(m,n,h,l)\ :=\ \lim_{K\to \infty}\bigg \langle \mu_{k}(p^h)\mu_{k}(p^l)A_{k}(p^{n})A_{k}(p^{m})\bigg \rangle_{K}
\end{align}
and write

\begin{align}
\delta_p(m,n,h,l)\ :=\ \begin{cases}
\delta_{3(4)}(m,n,h,l) & \text{ when }p\equiv 3(4)\\
\delta_{1(4)}(m,n,h,l) &  \text{ when }p\equiv 1(4)\\
\delta_{2}(m,n,h,l) &  \text{ when }p=2.
\end{cases}
\end{align}

\subsubsection{}\textit{\textbf{p}} $\equiv$ \textbf{1(4):} By (\ref{inverse coefficients}), we may restrict to the case in which $h,l 
\in \{0,1,2\}$. If $h,l \in \{0,2\}$, then $\delta_{1(4)}(m,n,h,l)$ reduces to $\left<A_{k}(p^m)A_{k}(p^n)\right>_K$, where
\begin{equation}
A_{k}(p^{m}) = \left\{
\begin{array}{l l}
\sum_{j=-\frac{m}{2}}^{\frac{m}{2}}e^{2j4ki\theta_p} &  m \text{ even}\\
\sum_{j=-\frac{\left(m+1\right)}{2}}^{\frac{\left(m-1\right)}{2}}e^{\left(2j+1\right)4ki\theta_p} & m \textnormal{ odd}.\\
\end{array} \right.
\end{equation}
Expanding the product $A_{k}(p^m)A_{k}(p^n)$ yields a double sum of points on the unit circle, and averaging over $ k \leq K$ then eliminates, in the limit, any such terms which are not identically equal to 1.  Collecting the significant terms, we find that
\begin{align}
\delta_{1(4)}(m,n,h,l) =  
\begin{cases}
\min{\left\{m,n\right\}}+1& m+n ~ \mathrm{even}\\
0& m+n ~ \mathrm{odd}.
\end{cases}
\end{align}
If either $h=1$ and $l \in \{0,2\}$, or $l=1$ and $h \in \{0,2\}$, then the product
$\mu_{k}(p^h)\mu_{k}(p^l)=-A_{k}(p)=-(e^{4ki\theta_p}+e^{-4ki\theta_p})$, so that (\ref{coefficient average}) reduces to
\begin{equation}
\left<-\left(e^{4ki\theta_p}+e^{-4ki\theta_p}\right)A_{k}(p^m)A_{k}(p^n)\right>_K.
\end{equation} Expanding out this product yields again a sum of points on the unit circle, which upon averaging over $k \leq K$ eliminates, in the limit, any such terms not identically equal to 1. We then obtain
\begin{align}
\delta_{1(4)}(m,n,h,l)=
\begin{cases}
0 & m+n ~ \mathrm{even}\\
-2\left(\min{\left\{m,n\right\}}+1\right) & m+n ~ \mathrm{odd}.
\end{cases}
\end{align}
Finally, suppose $h = l = 1$.  In this case, the product $\mu_{k}(p^h)\mu_{k}(p^l)=A_{k}(p)^2=e^{2\cdot4ki\theta_p}+2+e^{-2\cdot4ki\theta_p}$, so that (\ref{coefficient average}) reduces to 
\begin{equation}
\left<\left(e^{2\cdot 4ki\theta_p}+2+e^{-2\cdot 4ki\theta_p}\right)A_{k}(p^m)A_{k}(p^n)\right>_K.
\end{equation} Collecting significant contributions as before, we conclude that
\begin{align}
\delta_{1(4)}(m,n,h,l)=
\begin{cases}
4n+2 & m=n\\
4\left(\min{\left\{m,n\right\}}+1\right)& m\neq n, m+n ~ \mathrm{even}\\
0 & m+n ~ \mathrm{odd}.
\end{cases}
\end{align}

\subsubsection{}\textit{\textbf{p}} $\equiv$ \textbf{3(4):}
Again we may restrict to the case in which $h,l \in \{0,2\}$.  If $h = l
\in \{0,2\}$, then $\mu_{k}(p^h)\mu_{k}(p^l)=1$, and therefore 
\begin{align}
\delta_{3(4)}(m,n,h,l)&=
\begin{cases}
1 & m, n \text{ are even}
\\
0 & \text{otherwise}.
\end{cases}
\end{align}
Likewise, if $(h,l) = (0,2)$ or $(h,l)=(2,0)$ then $\mu_{k}(p^h)\mu_{k}(p^l)=-1$ and 
\begin{align}
\delta_{3(4)}(m,n,h,l) &=
\begin{cases}
-1 & m, n \text{ are even}
\\
0 & \text{otherwise}.
\end{cases}
\end{align}

\subsubsection{}\textit{\textbf{p}} \textbf{= 2:} When $p=2$, we may restrict to the case in which $h,l \in \{0,1\}$.  If, moreover, $h=l$, then 

\begin{align}
\delta_{2}(m,n,h,l)&=\big \langle(-1)^{(m+n)k}\big \rangle_K =
\begin{cases}
1 & m+n \text{ is even}
\\
0 & \text{otherwise,}
\end{cases}
\end{align}
while if $h \neq l$, 
\begin{align}
\delta_{2}(m,n,h,l)&= -\big \langle(-1)^{(m+n+1)k}\big \rangle_K =
\begin{cases}
-1 & m+n \text{ is odd}
\\
0 & \text{otherwise.}
\end{cases}
\end{align}
\subsubsection{}\textit{\textbf{Summary:}}
Summarizing the above results, we then conclude that
\begin{align}\label{coefficient cases}
\begin{split}
&\delta_{1(4)}(m,n,h,l)\ =\ 
\begin{cases}
\min{\{m,n\}}+1 & m+n\text{ even, } h,l \in \{0,2\}\\
-2(\min{\{m,n\}}+1) & m+n\text{ odd, }(h,l)=(0,1),(1,0),(1,2)\text{ or }(2,1)\\
4n+2 & m=n,\ (h,l) = (1,1)\\
4\left(\min{\left\{m,n\right\}}+1\right) & m\neq n, \hspace{2mm} m+n ~ \mathrm{even,} \hspace{2mm} (h,l) = (1,1)\\
0 & \text{otherwise},
\end{cases}
\\
&\delta_{3(4)}(m,n,h,l)\ =\ 
\begin{cases}
1 & m,n\text{ even, }(h,l)=(0,0)\text{ or }(2,2)\\
-1 & m,n\text{ even, }(h,l)=(0,2)\text{ or }(2,0)\\
0 &\text{otherwise},
\end{cases}\\
&\hspace{4mm}\delta_2(m,n,h,l)\ =\ 
\begin{cases}
1 & m+n\text{ even, }(h,l)=(0,0)\text{ or }(1,1)\\
-1 & m+n\text{ odd, }(h,l)=(0,1)\text{ or }(1,0)\\
0 & \text{otherwise}.
\end{cases}
\end{split}
\end{align}
\subsection{Step Four: Conjecture}
Upon applying the averages, the Ratios Conjecture recipe claims that for $\alpha, \beta,\gamma,\delta$ satisfying the conditions specified in (\ref{ratios domain}), we have

\begin{align}\label{the conjecture M}
\sum_{k \neq 0}\bigg | \widehat{f}\left( \frac{k}{K}\right) \bigg |^{2}\frac{L_{k}(\frac 1 2+\alpha)L_{k}(\frac 1 2+\beta)}{L_{k}(\frac 1 2+\gamma)L_{k}(\frac 1 2+\delta)}=\sum_{k \neq 0}\bigg | \widehat{f} \left( \frac{k}{K}\right) \bigg |^{2} M_{K}(\alpha,\beta,\gamma,\delta)+O\left(K^{\frac{1}{2}+\epsilon}\right),
\end{align}
where
\begin{align}
\begin{split}
&M_{K}(\alpha,\beta,\gamma,\delta):=\prod_{\substack{ p  }}G_{p}(\alpha,\beta,\gamma,\delta)+\frac{\left(\pi/2K\right)^{2\alpha}}{1-2\alpha}\prod_{\substack{ p  }}G_{p}(-\alpha,\beta,\gamma,\delta)\\
&\phantom{=}+\frac{\left(\pi/2K\right)^{2\beta}}{1-2\beta} \prod_{\substack{ p  }}G_{p}(\alpha,-\beta,\gamma,\delta)+\frac{\left(\pi/2K\right)^{2(\alpha+\beta)}}{1-2(\alpha+\beta)}\prod_{\substack{ p  }}G_{p}(-\alpha,-\beta,\gamma,\delta),
\end{split}
\end{align}

and
\begin{equation}
G_{p}(\alpha,\beta,\gamma,\delta):=\sum_{m,n,h,l}\frac{\delta_{p}(m,n,h,l)}{p^{h(\frac 1 2+\gamma)+l(\frac 1 2+\delta)+n(\frac 1 2+\alpha)+m(\frac 1 2+\beta)}}.
\end{equation}

\section{Simplifying the Ratios Conjecture Prediction}

In this section we seek a simplified form of $M_{K}(\alpha,\beta,\gamma,\delta)$.  First, we again consider several separate cases, depending on the value of $p$ mod 4.
\subsection{Pulling out Main Terms}
Suppose $p \equiv 3(4)$. By (\ref{coefficient cases}), we expand each local factor as
\begin{align}
\begin{split}
G_{p}(\alpha,\beta,\gamma,\delta)&=\sum_{\substack{m,n \\ \text{even}}}\frac{\delta_{3(4)}(m,n,0,0)}{p^{n(\frac 1 2+\alpha)+m(\frac 1 2+\beta)}}+\frac{\delta_{3(4)}(m,n,2,2)}{p^{2(\frac 1 2+\gamma)+2(\frac 1 2+\delta)+n(\frac 1 2+\alpha)+m(\frac 1 2+\beta)}}\\
&\phantom{=}+ \frac{\delta_{3(4)}(m,n,0,2)}{p^{2(\frac 1 2+\delta)+n(\frac 1 2+\alpha)+m(\frac 1 2+\beta)}}+\frac{\delta_{3(4)}(m,n,2,0)}{p^{2(\frac 1 2+\gamma)+n(\frac 1 2+\alpha)+m(\frac 1 2+\beta)}}\\
&=\sum_{\substack{m,n \\ \text{even}}}\frac{1}{p^{n(\frac 1 2+\alpha)+m(\frac 1 2+\beta)}}+\frac{1}{p^{(1+2\gamma)+(1+2\delta)+n(\frac 1 2+\alpha)+m(\frac 1 2+\beta)}}\\
&\phantom{=}- \frac{1}{p^{(1+2\delta)+n(\frac 1 2+\alpha)+m(\frac 1 2+\beta)}}-\frac{1}{p^{(1+2\gamma)+n(\frac 1 2+\alpha)+m(\frac 1 2+\beta)}}\\
&=\left(1+\frac{1}{p^{2+2\gamma+2\delta}}-\frac{1}{p^{1+2\delta}}-\frac{1}{p^{1+2\gamma}}\right)\sum_{m,n}\frac{1}{p^{n(1+2\alpha)+m(1+2\beta)}}.
\end{split}
\end{align}
Assuming small positive fixed values of $\text{Re}(\alpha),\text{Re}(\beta),\text{Re}(\gamma),\text{Re}(\delta)$, we factor out all terms which, for fixed $p$, converge substantially slower than $1/p^2$ and note that
\begin{align}
\begin{split}
G_{p}(\alpha,\beta,\gamma,\delta)&=\left(1-\frac{1}{p^{1+2\delta}}-\frac{1}{p^{1+2\gamma}}+O\left(\frac{1}{p^{2}}\right)\right)\left(1+\frac{1}{p^{1+2\alpha}}+\frac{1}{p^{1+2\beta}}+O\left(\frac{1}{p^{2}}\right)\right)\\
&=1-\frac{1}{p^{1+2\delta}}-\frac{1}{p^{1+2\gamma}}+\frac{1}{p^{1+2\alpha}}+\frac{1}{p^{1+2\beta}}+O\left(\frac{1}{p^{2}}\right)\\
&=\left(1-\frac{1}{p^{1+2\alpha}}\right)^{-1}\left(1-\frac{1}{p^{1+2\beta}}\right)^{-1}\left(1-\frac{1}{p^{1+2\gamma}}\right)\left(1-\frac{1}{p^{1+2\delta}}\right)+O\left(\frac{1}{p^{2}}\right).
\end{split}
\end{align}
In fact we write
\[G_{p}(\alpha,\beta,\gamma,\delta)=Y_{p}(\alpha,\beta,\gamma,\delta)\times A_{p}(\alpha,\beta,\gamma,\delta),\]
where
\begin{align}
Y_{p}(\alpha,\beta,\gamma,\delta):=&\frac{\left(1-\frac{1}{p^{1+\alpha+\gamma}}\right)\left(1-\frac{1}{p^{1+\beta+\gamma}}\right)
\left(1-\frac{1}{p^{1+\alpha+\delta}}\right)\left(1-\frac{1}{p^{1+\beta+\delta}}\right)}{\left(1-\frac{1}{p^{1+2\alpha}}\right)\left(1-\frac{1}{p^{1+2\beta}}\right)\left(1-\frac{1}{p^{1+\alpha+\beta}}\right)\left(1-\frac{1}{p^{1+\gamma+\delta}}\right)}\nonumber 
\\
& \times\frac{\left(1+\frac{1}{p^{1+\alpha+\gamma}}\right)\left(1+\frac{1}{p^{1+\beta+\gamma}}\right)
\left(1+\frac{1}{p^{1+\alpha+\delta}}\right)\left(1+\frac{1}{p^{1+\beta+\delta}}\right)}{\left(1+\frac{1}{p^{1+\alpha+\beta}}\right)\left(1+\frac{1}{p^{1+2\gamma}}\right)\left(1+\frac{1}{p^{1+2\delta}}\right)\left(1+\frac{1}{p^{1+\gamma+\delta}}\right)}
\end{align}
and $A_{p}(\alpha,\beta,\gamma,\delta):= G_{p}(\alpha,\beta,\gamma,\delta)/Y_{p}(\alpha,\beta,\gamma,\delta)$
 is another local function, which converges like $1/p^{2}$ for sufficient small $\text{Re}(\alpha),\text{Re}(\beta),\text{Re}(\gamma),$ and $\text{Re}(\delta)$.\\
\\
Next, suppose $p \equiv 1(4)$.  Factoring out terms with slow convergence as above, we expand $G_{p}(\alpha,\beta,\gamma,\delta)$ as

\begin{align}
\begin{split}
&G_{p}(\alpha,\beta,\gamma,\delta)=\sum_{\substack{m+n \\ \text{even}}}\bigg ( \frac{\min{\{m,n\}}+1}{p^{n(\frac 1 2+\alpha)+m(\frac 1 2+\beta)}}+\frac{\min{\{m,n\}}+1}{p^{(1+2\gamma)+(1+2\delta)+n(\frac 1 2+\alpha)+m(\frac 1 2+\beta)}}\\
&\phantom{=}+ \frac{\min{\{m,n\}}+1}{p^{(1+2\delta)+n(\frac 1 2+\alpha)+m(\frac 1 2+\beta)}}+\frac{\min{\{m,n\}}+1}{p^{(1+2\gamma)+n(\frac 1 2+\alpha)+m(\frac 1 2+\beta)}}\bigg )\\
&\phantom{=}+
\sum_{\substack{m+n \\ \text{odd}}}\bigg ( \frac{-2(\min{\{m,n\}}+1)}{p^{(\frac 1 2+\delta)+n(\frac 1 2+\alpha)+m(\frac 1 2+\beta)}}+\frac{-2(\min{\{m,n\}}+1)}{p^{(\frac 1 2+\gamma)+n(\frac 1 2+\alpha)+m(\frac 1 2+\beta)}}\\
&\phantom{=}+ \frac{-2(\min{\{m,n\}}+1)}{p^{(\frac 1 2+\gamma)+2(\frac 1 2+\delta)+n(\frac 1 2+\alpha)+m(\frac 1 2+\beta)}}+\frac{-2(\min{\{m,n\}}+1)}{p^{2(\frac 1 2+\gamma)+(\frac 1 2+\delta)+n(\frac 1 2+\alpha)+m(\frac 1 2+\beta)}}\bigg )\\
&\phantom{=}+
\sum_{n}\bigg ( \frac{4n+2}{p^{(1+\gamma+\delta)+n(1+\alpha+\beta)}}\bigg )+\sum_{\substack{m+n \\ \text{even}\\ m\neq n}}\frac{4\left(\min{\left\{m,n\right\}}+1\right)}{p^{(1+\gamma+\delta)+n(\frac 1 2+\alpha)+m(\frac 1 2+\beta)}}\\
&=\left ( \sum_{\substack{m+n \\ \text{ even}}} \frac{\min\{m,n\} +1}{p^{n(\frac 1 2+\alpha)+m(\frac 1 2+\beta)}} \right )\left( 1+ \frac{1}{p^{1+2\gamma}}+ \frac{1}{p^{1+2\delta}} + \frac{1}{p^{2+2\gamma + 2\delta}} \right )\\
&\phantom{=}+ \left ( \sum_{\substack{m+n \\ \text{ odd}}} \frac{-2(\min\{m,n\}+1)}{p^{n(\frac 1 2+\alpha)+m(\frac 1 2+\beta)}} \right )\times\left ( \frac{1}{p^{\frac 1 2+\gamma}} +\frac{1}{p^{\frac 1 2+\delta}} + \frac{1}{p^{\frac 3 2+2\gamma +\delta}} + \frac{1}{p^{\frac 3 2+\gamma+2\delta}} \right )\\
&\phantom{=}+\left ( \sum_{\substack{m+n \\ \text{ even} \\ m \neq n}} \frac{4 \min\{m,n\}+4}{p^{n(\frac 1 2+\alpha)+m(\frac 1 2+\beta)}} + \sum_{n} \frac{4 n+2}{p^{n(1+\alpha+\beta)}} \right ) \left ( \frac{1}{p^{1+\gamma+\delta}} \right ).
\end{split}
\end{align}

Since
\begin{equation}
 \sum_{\substack{m+n\\ \text{ even}}} \frac{\min\{m,n\} +1}{p^{n(\frac 1 2+\alpha)+m(\frac 1 2+\beta)}} = \left(1+\frac{1}{p^{1+2\alpha}}+\frac{1}{p^{1+2\beta}}+\frac{2}{p^{1+\alpha+\beta}}+O\left(\frac{1}{p^2}\right)\right), \\
\end{equation}
\begin{equation}
    \sum_{\substack{m+n\\ \text{ odd}}} \frac{-2(\min\{m,n\}+1)}{p^{n(\frac 1 2+\alpha)+m(\frac 1 2+\beta)}} = \left (\frac{-2}{p^{\frac 1 2+\alpha}} +\frac{-2}{p^{\frac 1 2+\beta}}+O\left ( \frac{1}{p^{\frac 3 2}} \right )
    \right ), \\
\end{equation}
and
\begin{align}
\left ( \sum_{\substack{m+n \\ \text{ even} \\ m \neq n}} \frac{4 \min\{m,n\}+4}{p^{n(\frac 1 2+\alpha)+m(\frac 1 2+\beta)}} + \sum_{n} \frac{4n+2}{p^{n(1+\alpha+\beta)}} \right ) \left ( \frac{1}{p^{1+\gamma+\delta}} \right ) &= \frac{2}{p^{1+\gamma+\delta}} + O \left ( \frac{1}{p^2} \right ),
\end{align}
we conclude that, for $p \equiv 1(4)$, we may write
\begin{equation}
G_{p}(\alpha,\beta,\gamma,\delta) = Y_{p}(\alpha,\beta,\gamma,\delta)\times A_{p}(\alpha,\beta,\gamma,\delta),
\end{equation}
where
\begin{align}
Y_{p}(\alpha,\beta,\gamma,\delta):=&\frac{\left(1-\frac{1}{p^{1+\alpha+\gamma}}\right)\left(1-\frac{1}{p^{1+\beta+\gamma}}\right)
\left(1-\frac{1}{p^{1+\alpha+\delta}}\right)\left(1-\frac{1}{p^{1+\beta+\delta}}\right)}{\left(1-\frac{1}{p^{1+2\alpha}}\right)\left(1-\frac{1}{p^{1+2\beta}}\right)\left(1-\frac{1}{p^{1+\alpha+\beta}}\right)\left(1-\frac{1}{p^{1+\gamma+\delta}}\right)}\nonumber 
\\
& \times\frac{\left(1-\frac{1}{p^{1+\alpha+\gamma}}\right)\left(1-\frac{1}{p^{1+\beta+\gamma}}\right)
\left(1-\frac{1}{p^{1+\alpha+\delta}}\right)\left(1-\frac{1}{p^{1+\beta+\delta}}\right)}{\left(1-\frac{1}{p^{1+\alpha+\beta}}\right)\left(1-\frac{1}{p^{1+2\gamma}}\right)\left(1-\frac{1}{p^{1+2\delta}}\right)\left(1-\frac{1}{p^{1+\gamma+\delta}}\right)},
\end{align}
and $A_{p}(\alpha,\beta,\gamma,\delta)$ is a function that converges sufficiently rapidly.\\
\\
Finally, note that
\begin{align}
\begin{split}
G_{2}(\alpha,\beta,\gamma,\delta)&=\sum_{\substack{m+n \\ \text{even}}}\left(\frac{\delta_{2}(m,n,0,0)}{2^{n(\frac 1 2+\alpha)+m(\frac 1 2+\beta)}}+\frac{\delta_{2}(m,n,1,1)}{2^{(\frac 1 2+\gamma)+(\frac 1 2+\delta)+n(\frac 1 2+\alpha)+m(\frac 1 2+\beta)}}\right)\\
&\phantom{=}+\sum_{\substack{m+n \\ \text{odd}}}\left(\frac{\delta_{2}(m,n,1,0)}{2^{(\frac 1 2+\gamma)+n(\frac 1 2+\alpha)+m(\frac 1 2+\beta)}}+\frac{\delta_{2}(m,n,0,1)}{2^{(\frac 1 2+\delta)+n(\frac 1 2+\alpha)+m(\frac 1 2+\beta)}}\right)\\
&=\left(1+\frac 1 {2^{1+\gamma+\delta}}\right)\sum_{\substack{m+n \\ \text{even}}}\left(\frac{1}{2^{n(\frac 1 2+\alpha)+m(\frac 1 2+\beta)}}\right)\\
&\phantom{=}-\left(\frac 1 {2^{\frac 1 2+\gamma}}+\frac 1 {2^{\frac 1 2+\delta}}\right)\sum_{\substack{m+n \\ \text{odd}}}\left(\frac{1}{2^{n(\frac 1 2+\alpha)+m(\frac 1 2+\beta)}}\right).
\end{split}
\end{align}

We therefore may write
\begin{equation}
G_{2}(\alpha,\beta,\gamma,\delta) = Y_{2}(\alpha,\beta,\gamma,\delta)\times A_{2}(\alpha,\beta,\gamma,\delta),
\end{equation}
where
\begin{align}
\begin{split}
Y_{2}(\alpha,\beta,\gamma,\delta)&:=\frac{\left(1-\frac{1}{2^{1+\alpha+\gamma}}\right)\left(1-\frac{1}{2^{1+\beta+\gamma}}\right)
\left(1-\frac{1}{2^{1+\alpha+\delta}}\right)\left(1-\frac{1}{2^{1+\beta+\delta}}\right)}{\left(1-\frac{1}{2^{1+2\alpha}}\right)\left(1-\frac{1}{2^{1+2\beta}}\right)\left(1-\frac{1}{2^{1+\alpha+\beta}}\right)\left(1-\frac{1}{2^{1+\gamma+\delta}}\right)}
\end{split}
\end{align}
and $A_{2}(\alpha,\beta,\gamma,\delta):= G_{2}(\alpha,\beta,\gamma,\delta)/Y_{2}(\alpha,\beta,\gamma,\delta)$.
\subsection{Expanding the Euler Product}

Recall that for Re$(x)> 0$,
\begin{align}
\zeta(1+x) &= \prod_p \left(1-\frac{1}{p^{1+x}}\right)^{-1}, \label{zeta+}
\end{align}
and
\begin{align}
\begin{split}
L(1+x) &= \prod_{p \equiv 1(4)} \left(1- \frac{1}{p^{1+x}} \right)^{-1}\prod_{p \equiv 3(4)} \left(1+ \frac{1}{p^{1+x}}\right)^{-1},
\end{split}
\end{align}
where $L(s):=L(s,\chi_{1})$.  Incorporating the above simplifications, and again collecting only terms which converge substantially slower that $p^{-3/2}$, we arrive at the following conjecture.
\begin{conjecture} With constraints on $\alpha, \beta, \gamma, \delta$ as described in (\ref{ratios domain}) and (\ref{ratios domain 2}), we have
\begin{align}
\begin{split}
&\sum_{k\neq 0}\bigg | \widehat{f}\left( \frac{k}{K}\right) \bigg |^{2}\frac{L_{k}(\frac 1 2+\alpha)L_{k}(\frac 1 2+\beta)}{L_{k}(\frac 1 2+\gamma)L_{k}(\frac 1 2+\delta)}=\sum_{k \neq 0}\bigg | \widehat{f} \left( \frac{k}{K}\right) \bigg |^{2}\bigg (G(\alpha,\beta,\gamma,\delta)\\
&\phantom{=}+\frac{1}{1-2\alpha}\left(\frac{\pi}{2K}\right)^{2\alpha}G(-\alpha,\beta,\gamma,\delta)+\frac{1}{1-2\beta}\left(\frac{\pi}{2K}\right)^{2\beta}G(\alpha,-\beta,\gamma,\delta)\\
&\phantom{=}+\left(\frac{1}{1-2(\alpha+\beta)}\right)\left(\frac{\pi}{2K}\right)^{2(\alpha+\beta)}G(-\alpha,-\beta,\gamma,\delta)\bigg )+O\left(K^{\frac{1}{2}+\epsilon}\right),
\end{split}
\end{align}
where
 
\begin{align}\label{G factor}
G(\alpha,\beta,\gamma,\delta)&:=\prod_{p}G_{p}(\alpha,\beta,\gamma,\delta) \\
&\hspace{1mm}= Y(\alpha,\beta,\gamma,\delta)\times A(\alpha,\beta,\gamma,\delta),
\end{align}

\begin{align}
\begin{split}
Y(\alpha,\beta,\gamma,\delta) &:= \prod_{p}Y_{p}(\alpha,\beta,\gamma,\delta)\\
&= \frac{\zeta(1+2\alpha)\zeta(1+2\beta)\zeta(1+\gamma+\delta)\zeta(1+\alpha+\beta)}{\zeta(1+\alpha+\gamma)\zeta(1+\beta+\gamma)\zeta(1+\beta+\delta)\zeta(1+\alpha+\delta)}\\
&\hspace{5mm}\times \frac{L(1+2\gamma)L(1+2\delta)L(1+\gamma+\delta)L(1+\alpha+\beta)}{L(1+\alpha+\gamma)L(1+\beta+\gamma)L(1+\beta+\delta)L(1+\alpha+\delta)},
\end{split}
\end{align}
and $ A(\alpha,\beta,\gamma,\delta):= \prod_{p}A_{p}(\alpha,\beta,\gamma,\delta) $ is an Euler product that converges for sufficiently small fixed values of $\textnormal{Re}(\alpha),\textnormal{Re}(\beta),\textnormal{Re}(\gamma),\textnormal{Re}(\delta)$.
\end{conjecture}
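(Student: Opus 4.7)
The plan is to derive the asserted conjectural identity by organizing the Ratios recipe output from Section 3.6 using the local factorizations obtained in Section 4.1. Starting from (\ref{the conjecture M}), I would observe that each of the four terms in $M_K$ has the form $\prod_p G_p$ evaluated at a sign-flipped copy of $(\alpha,\beta,\gamma,\delta)$; since the local decomposition $G_p = Y_p \cdot A_p$ is defined symbolically in the four shifts, it suffices to establish the global factorization $G = Y \cdot A$ once, with the stated $Y$, and the three remaining terms follow by the substitutions $\alpha \mapsto -\alpha$ and/or $\beta \mapsto -\beta$ on both $Y$ and $A$.

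For this main factorization, I would pass the local decomposition through the Euler product, setting $Y := \prod_p Y_p$ and $A := \prod_p A_p$. The heart of the argument is matching $\prod_p Y_p$ against the proposed product of $\zeta$ and $L$-values. Using (\ref{zeta+}) and the analogous Euler expansion for $L(1+x,\chi_1)$, a monomial $p^{-(1+x)}$ with coefficient uniform across all primes contributes a $\zeta(1+x)$ factor, while a monomial whose coefficient flips sign between $p \equiv 1(4)$ and $p \equiv 3(4)$ contributes an $L(1+x,\chi_1)$ factor; a monomial appearing with coefficient $\pm 2$ only at primes $p \equiv 1(4)$ (and vanishing at that order for $p \equiv 3(4)$) must come from the product $\zeta(1+x) L(1+x,\chi_1)$ or its reciprocal, since such a product expands as $1 \pm 2/p^{1+x} + O(p^{-2})$ at primes $p \equiv 1(4)$ and cancels at order $p^{-(1+x)}$ at primes $p \equiv 3(4)$. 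Reading off the explicit $Y_p$ from Section 4.1, the monomials $p^{-(1+2\alpha)}$ and $p^{-(1+2\beta)}$ yield $\zeta(1+2\alpha)\zeta(1+2\beta)$; the monomials $p^{-(1+2\gamma)}$ and $p^{-(1+2\delta)}$ yield $L(1+2\gamma)L(1+2\delta)$; the $+2$ coefficients at $p^{-(1+\alpha+\beta)}$ and $p^{-(1+\gamma+\delta)}$ yield $\zeta(1+\alpha+\beta)L(1+\alpha+\beta)\zeta(1+\gamma+\delta)L(1+\gamma+\delta)$ in the numerator; and the four $-2$ coefficients at the cross positions $p^{-(1+\alpha+\gamma)}, p^{-(1+\alpha+\delta)}, p^{-(1+\beta+\gamma)}, p^{-(1+\beta+\delta)}$ yield the matching four $\zeta L$ products in the denominator. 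The $p = 2$ Euler factor carries no pole of $\zeta$ or $L$ and is fully absorbed into $A$.

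The main obstacle I anticipate is bookkeeping rather than genuine mathematical difficulty: I must carefully track every monomial coefficient across both residue classes (including the exceptional prime $p = 2$), and verify that after stripping off $Y_p$ the remaining local factors satisfy $A_p = 1 + O(p^{-3/2-\epsilon})$ uniformly on compact subsets of the domain (\ref{ratios domain})--(\ref{ratios domain 2}). Once this is established, $A = \prod_p A_p$ converges absolutely on that domain, and the stated conjectural identity follows by applying the $Y \cdot A$ factorization to each of the four sign-flipped summands of $M_K$, inheriting the error $O(K^{1/2+\epsilon})$ already present in (\ref{the conjecture M}).
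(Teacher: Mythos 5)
Your proposal is correct and follows essentially the same route as the paper: take the four sign-flipped terms of $M_K$ from the recipe, use the local factorizations $G_p = Y_p \cdot A_p$ of Section 4.1, and match the slowly-converging monomials of $\prod_p Y_p$ (tracking coefficients across the residue classes mod $4$, with the $p=2$ factor and all $O(p^{-3/2})$ discrepancies absorbed into the convergent Euler product $A$) against the stated ratio of $\zeta$- and $L$-values, exactly as in Section 4.2. Your identification of which monomials produce $\zeta$, $L$, or $\zeta L$ factors reproduces the paper's $Y(\alpha,\beta,\gamma,\delta)$ correctly.
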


In further calculations, it will be helpful to define
\begin{equation}
\mathcal{Y}(\alpha,\beta,\gamma,\delta):=\frac{\zeta(1+2\alpha)\zeta(1+2\beta)\zeta(1+\gamma+\delta)\zeta(1+\alpha+\beta)}{\zeta(1+\alpha+\gamma)\zeta(1+\beta+\gamma)\zeta(1+\beta+\delta)\zeta(1+\alpha+\delta)},
\end{equation}
as well as 
\begin{equation}
\mathcal{A}(\alpha,\beta,\gamma,\delta):=\frac{G(\alpha,\beta,\gamma,\delta)}{\mathcal{Y}(\alpha,\beta,\gamma,\delta)}.
\end{equation}
It will also be necessary to make use of the following lemma.
\begin{lemma}\label{A is 1}
We have that 
\begin{align}
A(\alpha,\beta,\alpha,\beta)=\mathcal{A}(\alpha,\beta,\alpha,\beta)=1.
\end{align}
\end{lemma}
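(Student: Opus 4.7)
The plan is to exploit the tautology that the ratio $\mathcal{L}(\frac{1}{2}+\alpha,f)\mathcal{L}(\frac{1}{2}+\beta,f)/\mathcal{L}(\frac{1}{2}+\gamma,f)\mathcal{L}(\frac{1}{2}+\delta,f)$ becomes identically $1$ when $(\gamma,\delta)=(\alpha,\beta)$, and to track this identity through both the zeta/$L$-function factorization and the local Euler factors.

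First I would verify that $\mathcal{Y}(\alpha,\beta,\alpha,\beta)=1$ by direct substitution. Setting $\gamma=\alpha$ and $\delta=\beta$ in the definition
\begin{equation}
\mathcal{Y}(\alpha,\beta,\gamma,\delta)=\frac{\zeta(1+2\alpha)\zeta(1+2\beta)\zeta(1+\gamma+\delta)\zeta(1+\alpha+\beta)}{\zeta(1+\alpha+\gamma)\zeta(1+\beta+\gamma)\zeta(1+\beta+\delta)\zeta(1+\alpha+\delta)},
\end{equation}
the numerator becomes $\zeta(1+2\alpha)\zeta(1+2\beta)\zeta(1+\alpha+\beta)^{2}$ while the denominator becomes $\zeta(1+2\alpha)\zeta(1+\alpha+\beta)\zeta(1+2\beta)\zeta(1+\alpha+\beta)$; these cancel pairwise. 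The analogous cancellation in the $L$-function half of $Y(\alpha,\beta,\alpha,\beta)$ gives $Y(\alpha,\beta,\alpha,\beta)=\mathcal{Y}(\alpha,\beta,\alpha,\beta)=1$.

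Next, I would argue that $G(\alpha,\beta,\alpha,\beta)=1$ by returning to the defining sum. For each prime $p$ and each $k\neq 0$, the expansion in Step One shows
\begin{equation}
\sum_{m,n,h,l}\frac{\mu_{k}(p^{h})\mu_{k}(p^{l})A_{k}(p^{n})A_{k}(p^{m})}{p^{h(\frac{1}{2}+\gamma)+l(\frac{1}{2}+\delta)+n(\frac{1}{2}+\alpha)+m(\frac{1}{2}+\beta)}}
\end{equation}
factors as a product of four geometric-type series, yielding the local factor of
\begin{equation}
\frac{L_{k}(\tfrac{1}{2}+\alpha)L_{k}(\tfrac{1}{2}+\beta)}{L_{k}(\tfrac{1}{2}+\gamma)L_{k}(\tfrac{1}{2}+\delta)}.
\end{equation}
Specializing to $(\gamma,\delta)=(\alpha,\beta)$, every local factor of this ratio equals $1$ identically in $k$. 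Since the local sum is absolutely convergent in the range (\ref{ratios domain}) uniformly in $k$, I may interchange the limit $K\to\infty$ and the (finite-in-effect) averaging with the four summations; this replaces each summand by $\delta_{p}(m,n,h,l)$ and yields $G_{p}(\alpha,\beta,\alpha,\beta)=1$ for every prime $p$. Taking the product over $p$ gives $G(\alpha,\beta,\alpha,\beta)=1$.

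Combining the two steps,
\begin{equation}
A(\alpha,\beta,\alpha,\beta)=\frac{G(\alpha,\beta,\alpha,\beta)}{Y(\alpha,\beta,\alpha,\beta)}=\frac{1}{1}=1, \qquad \mathcal{A}(\alpha,\beta,\alpha,\beta)=\frac{G(\alpha,\beta,\alpha,\beta)}{\mathcal{Y}(\alpha,\beta,\alpha,\beta)}=1,
\end{equation}
which is the claim. The only mildly subtle point is the interchange in the second step, but since each local factor equals $1$ already \emph{before} taking the average over $k$, no delicate analytic estimate is required; the identity is essentially a formal consequence of the multiplicativity of $A_{k}$ and $\mu_{k}$. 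Thus there is no genuine obstacle, only bookkeeping.
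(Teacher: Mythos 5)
Your proposal is correct, but it takes a genuinely different route from the paper. The paper proves $G_p(\alpha,\beta,\alpha,\beta)=1$ by brute force: for each residue class of $p$ modulo $4$ it evaluates the local sums in closed form (e.g.\ writing $\sum_{m+n\ \mathrm{even}}(\min\{m,n\}+1)\,p^{-n(\frac12+\alpha)-m(\frac12+\beta)}$ and its companions as explicit rational functions of $p^{-\alpha},p^{-\beta}$) and checks that everything collapses to $1$ when $\gamma=\alpha$, $\delta=\beta$. You instead use the structural fact that $\sum_h\mu_k(p^h)p^{-hs}$ is, by the very definition of $\mu_k$ in (\ref{inverse series}), the reciprocal of $\sum_nA_k(p^n)p^{-ns}$, so that for each fixed $k$ the quadruple local sum is the local Euler factor of $L_k(\frac12+\alpha)L_k(\frac12+\beta)/\bigl(L_k(\frac12+\gamma)L_k(\frac12+\delta)\bigr)$ and is identically $1$ at $(\gamma,\delta)=(\alpha,\beta)$ \emph{before} averaging; passing the $k$-average inside is legitimate because $|A_k(p^n)|\le n+1$ and $|\mu_k(p^h)|\le 2$ uniformly in $k$, so the quadruple sum converges absolutely and uniformly in $k$ for $\mathrm{Re}(\alpha),\mathrm{Re}(\beta)>-\frac14$. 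You correctly flag this interchange as the one point needing justification. Your argument is shorter and explains \emph{why} the identity holds (it is the local avatar of the tautology $\mathcal{L}/\mathcal{L}=1$ that the Ratios recipe is built to respect); what the paper's explicit computation buys is the closed-form expression for $G_p$ at a general argument, which is then differentiated in the proof of Lemma \ref{A integral}, so the brute force there is not wasted. Your treatment of $Y(\alpha,\beta,\alpha,\beta)=\mathcal{Y}(\alpha,\beta,\alpha,\beta)=1$ coincides with the paper's.
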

\begin{proof}
Since $Y(\alpha,\beta,\alpha,\beta)=\mathcal{Y}(\alpha,\beta,\alpha,\beta)=1$, it suffices to show that
$\hfill \break G(\alpha,\beta,\alpha,\beta)=1$.  Note that $G_{2}(\alpha,\beta,\alpha,\beta )=1$, and upon writing

\begin{equation}
\sum_{m, n} \frac{1}{p^{n(1+2\alpha)+m(1+2\beta)}}=\left(1-\frac{1}{p^{1+2\beta}}\right)^{-1}
\left(1-\frac{1}{p^{1+2\alpha}}\right)^{-1},
\end{equation}
we similarly obtain that $G_{p}(\alpha,\beta,\alpha,\beta )=1$ whenever $p \equiv 3(4)$.  Moreover, we rewrite
\begin{align}
\sum_{\substack{m+n\\ \text{even}}} \frac{\text{min}(m,n) +1}{p^{m(\frac{1}{2}+\alpha)+n(\frac{1}{2}+\beta)}}= \frac{p^{2(1+\alpha +\beta)}(1+p^{1+\alpha+\beta})}{(p^{1+2\alpha}-1)(p^{1+\alpha+\beta}-1)(p^{1+2\beta}-1)},
\end{align}
and
\begin{equation}
\sum_{\substack{m+n \\ \text{odd}}} \frac{-2(\text{min}(m,n) +1)}{p^{m(\frac{1}{2}+\alpha)+n(\frac{1}{2}+\beta)}}=\frac{-2p^{\frac{5}{2}+2\alpha+2\beta}(p^{\alpha}+p^{\beta})}{(p^{1+2\alpha}-1)(p^{1+\alpha+\beta}-1)(p^{1+2\beta}-1)},
\end{equation}
as well as
\begin{align}
\begin{split}
\sum_{\substack{m \neq n \\ m+n \text{ even}}} \frac{4\cdot \text{min}(m, n)+4}{p^{m(\frac{1}{2}+\alpha)+n(\frac{1}{2}+\beta)}} &= \frac{4p^{2(1+\alpha+\beta)}(1+p^{1+\alpha+\beta})}{(p^{1+2\alpha}-1)(p^{1+\alpha+\beta}-1)(p^{1+2\beta}-1)}\\
&\phantom{=}-\frac{4p^{2(1+\alpha+\beta)}}{(p^{1+\alpha+\beta}-1)^{2}},
\end{split}
\end{align}
and

\begin{equation}
\sum_{n=0}^\infty \frac{4n+2}{p^{n(1+\alpha+\beta)}}=\frac{2p^{1+\alpha+\beta}(1+p^{1+\alpha+\beta})}{(p^{1+\alpha+\beta}-1)^2},
\end{equation}
so that for $p \equiv 1(4)$,
\begin{align}
\begin{split}
&G_{p}(\alpha,\beta,\gamma,\delta)=\left (  \frac{p^{2(1+\alpha +\beta)}(1+p^{1+\alpha+\beta})}{(p^{1+2\alpha}-1)(p^{1+\alpha+\beta}-1)(p^{1+2\beta}-1)} \right )\\
& \times\bigg(1+ \frac{1}{p^{1+2\gamma}}+ \frac{1}{p^{1+2\delta}} + \frac{1}{p^{2+2\gamma + 2\delta}} \bigg)- \left (\frac{2p^{\frac{5}{2}+2\alpha+2\beta}(p^{\alpha}+p^{\beta})}{(p^{1+2\alpha}-1)(p^{1+\alpha+\beta}-1)(p^{1+2\beta}-1)} \right )\\
&\times \left ( \frac{1}{p^{\frac 1 2+\gamma}} +\frac{1}{p^{\frac 1 2+\delta}} + \frac{1}{p^{\frac 3 2+2\gamma +\delta}} + \frac{1}{p^{\frac 3 2+\gamma+2\delta}} \right )+\bigg(\frac{2p^{1+\alpha+\beta}(1+p^{1+\alpha+\beta})}{(p^{1+\alpha+\beta}-1)^2}\\
&+ \frac{4p^{2+2\alpha+2\beta}(1+p^{1+\alpha+\beta})}{(p^{1+2\alpha}-1)(p^{1+\alpha+\beta}-1)(p^{1+2\beta}-1)} -\frac{4p^{2+2\alpha+2\beta}}{(p^{1+\alpha+\beta}-1)^{2}}\bigg) \left ( \frac{1}{p^{1+\gamma+\delta}} \right )\nonumber.
\end{split}
\end{align}
Upon setting $\alpha = \gamma$ and $\beta = \delta$, we then have $G_{p}(\alpha,\beta,\alpha,\beta)=1$.  The lemma then follows from (\ref{G factor}).
\end{proof}
\begin{lemma}\label{A integral}
Define $A_{\beta}(\alpha):=A(-\alpha,-\beta,\alpha,\beta).$ Then 
\begin{align}
\frac{d}{d \alpha}A_{\beta}(\alpha)\bigg \vert_{\alpha = -\beta} &= -2\sum_{p\equiv 3(4)}\frac{\left(p^{2+8\beta}+p^2-2 p^{4\beta}\right) \log p}{p^{2+8\beta}+p^2-p^{4\beta}-p^{4+4\beta}}.
\end{align}
\end{lemma}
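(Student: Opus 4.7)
The plan is to use the Euler product factorization $A_\beta(\alpha)=A(-\alpha,-\beta,\alpha,\beta)=\prod_p A_p(-\alpha,-\beta,\alpha,\beta)$, with $A_p=G_p/Y_p$, and reduce the derivative at $\alpha=-\beta$ to a sum of local logarithmic derivatives. As a preparation, I would check that $Y_p=1$ identically at the substituted point (the arguments of the eight $\zeta$- and eight $L$-factors pair into identical multisets) and that $G_p=1+O(p^{-2})$ there, so each $A_p=1+O(p^{-2})$ and the logarithmic derivative is meaningful with
\begin{equation*}
A_\beta'(-\beta)=A_\beta(-\beta)\sum_p\left.\tfrac{d}{d\alpha}\log A_p\right|_{\alpha=-\beta}.
\end{equation*}

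For $p\equiv 3(4)$ the local contribution is explicit. From the $\delta_{3(4)}$-table of (4.1) one obtains
\begin{equation*}
G_p(-\alpha,-\beta,\alpha,\beta)=\frac{(1-p^{-1-2\alpha})(1-p^{-1-2\beta})}{(1-p^{-1+2\alpha})(1-p^{-1+2\beta})},
\end{equation*}
so $\frac{d}{d\alpha}\log G_p|_{\alpha=-\beta}=2\log p\left[\frac{p^{-1+2\beta}}{1-p^{-1+2\beta}}+\frac{p^{-1-2\beta}}{1-p^{-1-2\beta}}\right]$. A parallel differentiation of the 16-factor $\zeta/L$ expression for $Y_p$ (with local $L$-factor $(1+p^{-s})^{-1}$) produces a matching expression in which two of the denominators carry $+$ instead of $-$; subtracting and applying $\tfrac{x}{1-x}-\tfrac{x}{1+x}=\tfrac{2x^{2}}{1-x^{2}}$ yields $2\log p\bigl[\tfrac{1}{p^{2-4\beta}-1}+\tfrac{1}{p^{2+4\beta}-1}\bigr]$. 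Combining these two fractions and multiplying numerator and denominator by $p^{4\beta}$ gives exactly the summand $-2(p^{2+8\beta}+p^{2}-2p^{4\beta})\log p/(p^{2+8\beta}+p^{2}-p^{4\beta}-p^{4+4\beta})$ of the claim.

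The main obstacle is to show that $[\tfrac{d}{d\alpha}\log A_p]_{\alpha=-\beta}=0$ for every $p\equiv 1(4)$ and for $p=2$, so that the sum collapses to only $p\equiv 3(4)$. For these primes the $\zeta$- and $L$-local factors coincide, and pairing the 16 local factors yields $[\tfrac{d}{d\alpha}\log Y_p]_{\alpha=-\beta}=0$ directly. The matching vanishing for $G_p$ is the delicate step: it amounts to a combinatorial identity on the $\delta_{1(4)}$- (resp.\ $\delta_{2}$-) tables. A term-by-term enumeration organized by the total $h+l+m+n$ verifies the cancellation through orders $p^{-1}$ and $p^{-2}$; to handle all orders at once, the cleanest route is to use the split-prime decomposition $L_{k}^{(p)}(s)=(1-e^{4ki\theta_{p}}p^{-s})^{-1}(1-e^{-4ki\theta_{p}}p^{-s})^{-1}$ at $p\equiv 1(4)$, which factors the local Ratios-Conjecture input into a symmetric two-character object in which the required vanishing becomes transparent. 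Granting this, the sum collapses to $p\equiv 3(4)$ and the previous paragraph yields the stated formula.
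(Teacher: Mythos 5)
Your strategy coincides with the paper's: factor $A_\beta$ into local factors $G_p/Y^{(p)}$ (where $Y^{(p)}$ is the local Euler factor of the sixteen $\zeta$- and $L$-values, which is indeed what $A=G/Y$ requires), note that each local factor equals $1$ at $\alpha=-\beta$ so that the product rule reduces everything to a sum of local logarithmic derivatives, and then compute those. Your $p\equiv 3(4)$ computation is correct and complete: $G_p(-\alpha,-\beta,\alpha,\beta)$ does collapse to the four-factor ratio you wrote, the derivative of $\log Y^{(p)}$ at $\alpha=-\beta$ is $\log p\,\bigl[\tfrac{x}{1-x}+\tfrac{y}{1-y}+\tfrac{x}{1+x}+\tfrac{y}{1+y}\bigr]$ with $x=p^{-1+2\beta}$, $y=p^{-1-2\beta}$, and the difference telescopes to the claimed summand exactly as you describe. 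This is in fact more detail than the paper supplies, since the paper only tabulates the three local derivatives without computation.

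The step that fails as written is $p=2$. Since $\chi_1(2)=0$, the $L$-factors are locally trivial at $2$ --- the $\zeta$- and $L$-local factors do \emph{not} coincide there --- so $Y^{(2)}$ has only eight nontrivial factors, and while its value at $\alpha=-\beta$ is still $1$, its logarithmic derivative there is \emph{not} zero: one finds $\frac{d}{d\alpha}\log Y^{(2)}\bigl|_{\alpha=-\beta} = \log 2\,\bigl[\tfrac{2^{-1+2\beta}}{1-2^{-1+2\beta}}+\tfrac{2^{-1-2\beta}}{1-2^{-1-2\beta}}\bigr]$. The vanishing of the local contribution at $2$ therefore cannot come from $G_2$ and $Y^{(2)}$ each having vanishing derivative; instead $\frac{d}{d\alpha}\log G_2\bigl|_{\alpha=-\beta}$ equals the same nonzero quantity and the two cancel in the ratio. (This is checkable from the closed form $G_2(-\alpha,-\beta,\alpha,\beta)=\bigl(\tfrac14+(A-C)(B-D)\bigr)/\bigl((1-A^2)(1-B^2)\bigr)$ with $A=2^{-1/2+\alpha}$, $B=2^{-1/2+\beta}$, $C=2^{-1/2-\alpha}$, $D=2^{-1/2-\beta}$.) For $p\equiv 1(4)$ your claim that $\frac{d}{d\alpha}\log Y^{(p)}\bigl|_{\alpha=-\beta}=0$ is correct, but the matching vanishing of $\frac{d}{d\alpha}\log G_p$ is only sketched and explicitly deferred ("granting this"); the paper asserts the same fact with no more justification, and your suggestion to use the closed rational form of $G_p$ at split primes (as in the proof of Lemma \ref{A is 1}) is a workable way to finish. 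So: right architecture and a correct main computation, but the $p=2$ justification must be repaired along the lines above, and the $p\equiv 1(4)$ cancellation still needs to be carried out.
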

\begin{proof}
Write
\begin{align}
A_{\beta}(\alpha) = \prod_{p} p_{\beta}(\alpha),
\end{align}
where

\begin{equation}
p_{\beta}(\alpha):= A_{p}(-\alpha,-\beta,\alpha,\beta)
\end{equation}

 are the local factors of $A_{\beta}(\alpha)$, and note that $p_{\beta}(-\beta)=1$ at each prime $p$.  By the product rule,
\begin{equation}
\frac{d}{d \alpha}A_{\beta} = \sum_{q}\frac{d}{d \alpha}p_{\beta}\prod_{p \neq q}q_{\beta}.
\end{equation}

Note that
\begin{equation}
    p_{\beta}(\alpha)=\left\{
\begin{array}{l l l}
\frac{2^{-\alpha-\beta}(-2+2^{\alpha+\beta})(-1+2^{1+\alpha+\beta})(2-2^{1+2\alpha}+2^{\alpha+\beta}-2^{1+2\beta}+2^{1+2\alpha+2\beta})(-2+2^{2\alpha})(-2+2^{2\beta})}{(2^{1/2}-2^{\alpha})(2^{1/2}+2^{\alpha})(2^{1/2}-2^{\beta})(2^{1+\alpha}-2^\beta)(2^{1/2}+2^\beta)(2^\alpha-2^{1+\beta})} & \text{ if } p=2 \\
-\frac{1}{(-1+p)^4(p^{1+\alpha}-p^\beta)^2(p^\alpha-p^{1+\beta})^2}p^{-4(\alpha+\beta)}(-1+p^{1+2\alpha})(-p+p^{\alpha+\beta}) & \text{ if } p\equiv 1(4)\\
\quad \times (-1+p^{1+\alpha+\beta})^2(-1+p^{1+2\beta})(p-2p^{1+2\alpha}+p^{2+2\alpha}+p^{\alpha+\beta}-2p^{3(\alpha+\beta)} & \\
\quad -4p^{1+\alpha+\beta}-4p^{2(1+\alpha+\beta)}+2p^{2+\alpha+\beta}+3p^{1+3\alpha+\beta}-2p^{2+3\alpha+\beta}-2p^{1+2\beta} & \\
\quad +p^{2+2\beta}+4p^{1+2\alpha+2\beta}+p^{3+2\alpha+2\beta}+3p^{1+\alpha+3\beta}-2p^{2+\alpha+3\beta}+p^{2+3\alpha+3\beta}) & \\
\frac{p^{-4(\alpha+\beta)}(-1+p^{1+2\alpha})(1+p^{1+2\alpha})(-p+p^{\alpha+\beta})(p+p^{\alpha+\beta})(-1+p^{1+\alpha+\beta})}{(-1+p^2)^2(p^{2+4\alpha}-p^{2(\alpha+\beta)}-p^{2(2+\alpha+\beta)}+p^{2+4\beta}}) & \text{ if } p\equiv 3(4), \\
\quad \times (1+p^{1+\alpha+\beta})(-1+p^{1+2\beta})(1+p^{1+2\beta}) & \end{array} \right.
\end{equation}
so that
\begin{equation}
\frac{d}{d \alpha}p_{\beta}(\alpha)\bigg \vert_{\alpha = -\beta} = \left\{
\begin{array}{l l l}
0 & \text{ if } p=2 \\
0 & \text{ if } p\equiv 1(4)\\
-2\frac{\left(p^{2+8 \beta}+p^2-2 p^{4 \beta}\right) \log p}{p^{2+8 \beta}+p^2-p^{4 \beta }-p^{4+4 \beta }} & \text{ if } p\equiv 3(4),
\end{array} \right.
\end{equation}
from which the result follows.
\end{proof}
\section{The Ratios Conjecture Prediction for $\text{Var}(\psi_{K,X})$:}
Let $F_{K}(\theta)$ be as in (\ref{Fk}). By the Fourier expansion of $F_{K}$, we may write
\begin{align}
\begin{split}
\psi_{K,X}(\theta) &=\sum_{\A } \Phi \left(\frac{N(\A)}{X}\right) \Lambda(\A) F_K(\theta_{\A}  -\theta)\\
&= \sum_{\A }\Phi\left(\frac{N(\A)}{X}\right)\Lambda(\A)\sum_{k \in \Z}\frac{1}{K}\widehat{f}\left(\frac{k}{K}\right) e^{4ki(\theta_{\A}-\theta)}.
\end{split}
\end{align}
Since the mean value is given by the zero mode $k=0$, the variance may be computed as
\begin{align}
\begin{split}
\text{Var}(\psi_{K,X})&=\frac{2}{\pi}\int_{0}^{\frac \pi 2}\bigg|\psi_{K,X}(\theta) - \langle \psi_{K,X}\rangle\bigg|^{2}d\theta \\
&=\frac{2}{\pi}\int_{0}^{\frac \pi 2}\bigg|\sum_{k\neq 0} e^{-i4k\theta} \frac 1{ K}\widehat{f}\left(\frac {k}{ K}\right) \sum_{\A} \Phi\left(\frac{N(\A)}{X}\right) \Lambda(\A)\Xi_{k}(\A)\bigg|^{2}d\theta.\label{nonk}
\end{split}
\end{align}
By applying the \textit{Mellin Inversion Formula}
\begin{equation}
\Phi(x) = \frac{1}{2\pi i}\int_{(2)}\tilde{\Phi}(s)x^{-s}ds,
\end{equation}
we obtain
\begin{align}
\begin{split}
 \sum_{\mathfrak a} \Lambda(\mathfrak a) \Xi_{k}(\mathfrak a)\Phi\left(\frac{N(\mathfrak a)}{X}\right) &= \frac 1{2\pi i}\int_{(2)}  \sum_{\mathfrak a} \Lambda(\mathfrak a) \Xi_{k}(\A) \frac{X^s}{N(\A)^s} \tilde \Phi(s) ds\\
&=  \frac 1{2\pi i}\int_{(2)} -\frac{L_{k}'}{L_{k}}(s) \tilde \Phi(s) X^s ds.
\end{split}
\end{align}
Inserting this into (\ref{nonk}), we find that
\begin{align}
\begin{split}
\text{Var}(\psi_{K,X}) &= \frac{2}{\pi}\int_{0}^{\frac \pi 2}\bigg|\sum_{k\neq 0} e^{-i4k\theta} \frac 1{ K}\widehat{f}\left(\frac {k}{ K}\right) \sum_{\A} \Lambda(\A)\Xi_{k}(\A)\Phi\left(\frac{N(\A)}{X}\right)\bigg|^{2}d\theta\\
&= \frac{2}{\pi}\int_{0}^{\frac \pi 2}\bigg|\sum_{k\neq 0} e^{-i4k\theta} \frac 1{ K}\widehat{f}\left(\frac {k}{ K}\right) \frac {i}{2\pi}\int_{(2)}\frac{L_{k}'}{L_{k}}(s) \tilde \Phi(s) X^s ds\bigg|^{2}d\theta.
\end{split}
\end{align}
Upon recalling that 
\begin{align}
\int_{0}^{\frac \pi 2}e^{4i(k'-k)\theta}d\theta =  \left\{
\begin{array}{l l}
0 & \text{ if } k \neq k' \\
\frac \pi 2 & \text{ if } k = k',\\
\end{array} \right.
\end{align}
$\text{Var}(\psi_{K,X})$ can be restricted to terms for which the Fourier coefficients are equal, i.e.,
\begin{align}
\begin{split}
\text{Var}(\psi_{K,X}) &=\frac 1{4\pi^2K^{2}}\int_{(2)}\int_{(2)}\sum_{k\neq 0}\bigg |\widehat{f}\left(\frac {k}{ K}\right)\bigg |^{2}\frac{L_{k}'}{L_{k}}(s)\frac{L_{k}'}{L_{k}}(\overline{s'}) \tilde \Phi(s)\tilde \Phi(\overline{s'})X^{s} X^{\overline{s'}}ds\overline{ds'}
\end{split}
\end{align} 
by Fubini's theorem.  Moreover, under GRH, $\frac {L_{k}'}{L_{k}}(s)$ is holomorphic in the half-plane Re$(s)>\frac {1}{2}$, and thus we may shift the vertical integrals to Re$(s)=\frac{1}{2}+\epsilon$, and Re$(s')=\frac{1}{2}+\epsilon'$, for any $\epsilon, \epsilon'>0$.  Upon making the change of variables $\alpha:= s-\frac{1}{2}$ and $\beta:= s'-\frac 1 2$ we find that
\begin{equation}\label{varcontour}
\begin{split}
\text{Var}(\psi_{K,X})&=- \frac {X^{1-2\lambda}}{4\pi^2}\int_{(\epsilon')}\int_{(\epsilon)}\sum_{k\neq 0} \bigg |\widehat{f}\left(\frac {k}{ K}\right)\bigg |^{2}\frac{L_{k}'}{L_{k}}\left(\frac{1}{2}+\alpha\right)\frac{L_{k}'}{L_{k}}\left(\frac{1}{2}+\beta\right)\\
&\phantom{=}\times \tilde \Phi\left(\frac{1}{2}+\alpha\right)\tilde \Phi\left(\frac{1}{2}+\beta\right) X^{\beta}X^{\alpha}d\alpha d\beta.
\end{split}
\end{equation}
Note by (\ref{ratios domain 2}) that the substitution of the ratios conjecture is only valid when Im$(\alpha)$, Im$(\beta)\ll_{c} K^{1-c}$, for small $c>0$.  If either Im$(\alpha)> K^{1-c}$ or Im$(\beta)> K^{1-c}$, we use the rapid decay of $\tilde{\Phi}$, as well as upper bounds on the growth of $\frac{L'_{k}}{L_{k}}$ within the critical strip, to show that the contribution to the double integral coming from these tails is bounded by $O_{c}\left(K^{-1+c}\right)$.    For Im$(\alpha)$, Im$(\beta) < K^{1-c}$, we take the derivative of (\ref{the conjecture M}) to obtain
\begin{align}\label{M prime}
\sum_{\substack{k \neq 0}} \bigg |\widehat{f}\left(\frac {k}{ K}\right)\bigg |^{2}\frac{L_{k}'}{L_{k}}\left(\frac{1}{2}+\alpha\right)\frac{L_{k}'}{L_{k}}\left(\frac{1}{2}+\beta\right)= \sum_{\substack{k \neq 0}} \bigg |\widehat{f}\left(\frac {k}{ K}\right)\bigg |^{2}M'_{K}(\alpha,\beta)+O\left(K^{\frac{1}{2}+\epsilon}\right),
\end{align}

where\footnote{Here, and elsewhere, we allow for a slight abuse of notation: $\alpha$ and $\beta$ denote coordinates of $M_{K}$, as well as coordinates of the point at which the derivative is then evaluated.}
\begin{align}
\begin{split}
&M_{K}'(\alpha,\beta) := \frac{\partial }{\partial \beta}\frac{\partial}{\partial \alpha}M_{K}(\alpha,\beta,\gamma,\delta)\Bigg \vert_{(\alpha,\beta,\alpha,\beta)}.
\end{split}
\end{align}

Plugging (\ref{M prime}) into (\ref{varcontour}) for Im$(\alpha)$, Im$(\beta) < K^{1-c}$, and using a similar argument as above to bound the tails, we then arrive at the following conjecture:
\begin{conjecture}\label{full conjecture}  We have that
\begin{align}
&\text{Var}(\psi_{K,X}) = -C_{f}\frac {X}{K}\bigg(I_{1} + I_{2}+I_{3}+I_{4}\bigg)+O\left(X^{-\frac{\lambda}{2}+\epsilon}\right),
\end{align}
where
\begin{equation}
\begin{split}
I_{1}:&= \int_{(\epsilon')} \int_{(\epsilon)}\frac{\partial }{\partial \beta}\frac{\partial}{\partial \alpha} G(\alpha,\beta,\gamma,\delta)\bigg \vert_{(\alpha,\beta,\alpha,\beta)}\\
&\phantom{=}\times \tilde \Phi\left(\frac{1}{2}+\alpha\right)\tilde \Phi\left(\frac{1}{2}+\beta\right)X^{\alpha+\beta}d\alpha d\beta,
\end{split}
\end{equation}

\begin{align}
\begin{split}
I_{2}:&=\int_{(\epsilon')}\int_{(\epsilon)}\frac{\partial }{\partial \beta}\frac{\partial}{\partial \alpha}\left(\frac{\pi}{2}\right)^{2\beta}\frac{1}{1-2\beta} G(\alpha,-\beta,\gamma,\delta)\bigg \vert_{(\alpha,\beta,\alpha,\beta)}\\
&\phantom{=}\times \tilde \Phi\left(\frac{1}{2}+\alpha\right)\tilde \Phi\left(\frac{1}{2}+\beta\right) X^{\alpha}X^{\beta(1-2\lambda)}d\alpha d\beta 
\end{split}
\end{align}

\begin{align}
\begin{split}
I_{3}:&= \int_{(\epsilon')}\int_{(\epsilon)}\frac{\partial }{\partial \beta}\frac{\partial}{\partial \alpha}\left(\frac{\pi}{2}\right)^{2\alpha}\frac{1}{1-2\alpha} G(-\alpha,\beta,\gamma,\delta)\bigg \vert_{(\alpha,\beta,\alpha,\beta)}\\
&\phantom{=} \times \tilde \Phi\left(\frac{1}{2}+\alpha\right)\tilde \Phi\left(\frac{1}{2}+\beta\right) X^{\alpha(1-2\lambda)}X^{\beta}d\alpha d\beta
\end{split}
\end{align}
and
\begin{align}
\begin{split}
I_{4}&:=\int_{(\epsilon')}\int_{(\epsilon)}\frac{\partial }{\partial \beta}\frac{\partial}{\partial \alpha} \left(\frac{1}{1-2(\alpha+\beta)}\right)\left(\frac{\pi}{2}\right)^{2(\alpha+\beta)} G(-\alpha,-\beta,\gamma,\delta)\bigg \vert_{(\alpha,\beta,\alpha,\beta)}\\
&\hspace{5mm}\times\tilde \Phi\left(\frac{1}{2}+\alpha\right) \tilde \Phi\left(\frac{1}{2}+\beta \right) X^{\alpha(1-2\lambda)}X^{\beta(1-2\lambda)}d\alpha d\beta.
\end{split}
\end{align}
\end{conjecture}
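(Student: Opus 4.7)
The plan is to start from the Fourier expansion of $F_K$ in \eqref{fourapprox}, which expresses $\psi_{K,X}(\theta)$ as a sum over $k \in \Z$ of terms weighted by $\widehat{f}(k/K) e^{-i4k\theta} \sum_{\A} \Lambda(\A) \Xi_k(\A) \Phi(N(\A)/X)$. Since the mean value $\langle \psi_{K,X} \rangle$ is captured entirely by the $k=0$ Fourier mode, the variance is the $L^2$-norm of the $k \neq 0$ contribution, and expanding the square together with the orthogonality relation $\int_0^{\pi/2} e^{4i(k'-k)\theta} d\theta = \frac{\pi}{2} \delta_{k,k'}$ collapses the double sum to a diagonal sum in $k$. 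I would then invoke the Mellin inversion formula on the integral $(2)$-line to rewrite the inner $\A$-sum as $\frac{1}{2\pi i} \int_{(2)} -\frac{L_k'}{L_k}(s) \tilde{\Phi}(s) X^s \, ds$, turning $\text{Var}(\psi_{K,X})$ into a double contour integral weighted by $\widehat{f}(k/K)^2 \frac{L_k'}{L_k}(s) \overline{\frac{L_k'}{L_k}(s')}$.

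Next, using GRH to guarantee holomorphy of $L_k'/L_k$ in $\text{Re}(s) > 1/2$, I would shift both contours to $\text{Re}(s) = 1/2 + \epsilon$ and $\text{Re}(s') = 1/2 + \epsilon'$ and substitute $\alpha = s - 1/2$, $\beta = s' - 1/2$, together with the conjugation identity \eqref{cmplxconj}, to obtain the expression \eqref{varcontour}. The heart of the proof is then to replace the $k$-sum
\[
\sum_{k \neq 0} \bigl|\widehat{f}(k/K)\bigr|^2 \frac{L_k'}{L_k}\bigl(\tfrac{1}{2}+\alpha\bigr) \frac{L_k'}{L_k}\bigl(\tfrac{1}{2}+\beta\bigr)
\]
with its prediction under the Ratios Conjecture. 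This is achieved by differentiating the asymptotic formula \eqref{the conjecture M} with respect to $\gamma$ and $\delta$ and then setting $\gamma = \alpha$, $\delta = \beta$; the differentiated recipe produces $M_K'(\alpha,\beta)$, which consists of four summands matching the four pieces of $M_K$ coming from the two choices in each approximate functional equation. The factor structure of $M_K$ ensures that each of the four summands, after substitution into \eqref{varcontour}, yields precisely one of $I_1, I_2, I_3, I_4$ (with the prefactor $-C_f X/K = -C_f X^{1-\lambda}$ coming from $-X^{1-2\lambda}/(4\pi^2)$ combined with the Parseval-type identity $\frac{1}{K^2}\sum_k |\widehat{f}(k/K)|^2 = 4\pi^2 C_f/K$ as in \eqref{fourier simplification}).

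The main obstacle is that the Ratios Conjecture as stated is only valid in the regime \eqref{ratios domain 2}, i.e., for $|\text{Im}(\alpha)|, |\text{Im}(\beta)| \ll_\epsilon K^{1-\epsilon}$, so the contour integrals must be truncated. For $|\text{Im}(\alpha)|, |\text{Im}(\beta)| \leq K^{1-c}$ one applies the differentiated recipe and gets the four integrals plus the stated error $O(K^{\epsilon-1/2})$ inherited from \eqref{the conjecture M}, which when multiplied by the overall $X/K$ factor yields an admissible $O(X^{-\lambda/2+\epsilon})$. For $|\text{Im}(\alpha)| > K^{1-c}$ or $|\text{Im}(\beta)| > K^{1-c}$, I would bound the tail using the rapid (super-polynomial) decay of $\tilde{\Phi}(1/2+\alpha)$ on vertical lines — since $\Phi \in C_c^\infty(0,\infty)$, its Mellin transform decays faster than any polynomial in $|\text{Im}(\alpha)|$ — combined with the standard GRH-conditional convexity-type bound $\frac{L_k'}{L_k}(1/2+\epsilon+it) \ll (\log(|t|+|k|+2))^2$. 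Balancing these against the $X^{\alpha+\beta}$ growth shows the tail contribution is absorbed into the error term.

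Finally, I would assemble the four surviving pieces. Each integrand is the double derivative $\frac{\partial^2}{\partial \alpha \partial \beta}$ of the corresponding factor of $M_K(\alpha,\beta,\gamma,\delta)$ evaluated along the diagonal $(\gamma,\delta) = (\alpha,\beta)$ after substituting in the appropriate pair of arguments (e.g., $(-\alpha,-\beta)$ for $I_4$), multiplied by $\tilde{\Phi}(1/2+\alpha)\tilde{\Phi}(1/2+\beta)$ and the appropriate $X$-powers. Relabeling $\pi/(2K) = (\pi/2) X^{-\lambda}$ converts the $K$-dependence into the factors $X^{-2\lambda \alpha}, X^{-2\lambda \beta}, X^{-2\lambda(\alpha+\beta)}$ that appear in $I_2, I_3, I_4$, and the structural formula of Conjecture \ref{full conjecture} follows. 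Lemma \ref{A is 1} will serve as a useful check, since it ensures the diagonal evaluation behaves well near $(\gamma,\delta) = (\alpha,\beta)$.
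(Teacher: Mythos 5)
Your proposal follows the paper's derivation essentially step for step: Fourier expansion and orthogonality to reduce to the diagonal in $k$, Mellin inversion, a GRH contour shift to $\mathrm{Re}=\tfrac12+\epsilon$, substitution of the differentiated Ratios prediction with the tails $|\mathrm{Im}(\alpha)|,|\mathrm{Im}(\beta)|>K^{1-c}$ truncated via the decay of $\tilde{\Phi}$ and bounds on $L_k'/L_k$, and the Parseval-type identity to extract the prefactor $-C_f X/K$. The only slip is cosmetic: the paper (and the statement of Conjecture \ref{full conjecture}) differentiates $M_K$ in $\alpha$ and $\beta$ and then evaluates at $(\gamma,\delta)=(\alpha,\beta)$, rather than differentiating in $\gamma$ and $\delta$, though both operations recover the same product $\frac{L_k'}{L_k}(\tfrac12+\alpha)\frac{L_k'}{L_k}(\tfrac12+\beta)$ on the left-hand side.
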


Conjecture \ref{conj} now follows from Conjecture \ref{full conjecture} as a consequence of the following three lemmas:
\begin{lemma}\label{1st total} We have
\begin{equation}
I_{1} = -(\log X) C_{\Phi}-C'_{\Phi}-\pi^2\tilde{\Phi}\left(\frac{1}{2}\right)^2+O_{\Phi}\left(X^{-\frac 1 5}\right).
 \end{equation}
\end{lemma}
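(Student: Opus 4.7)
The plan is to compute $I_1$ by shifting both contours past zero and summing residues, which yield the three main terms. By Lemma \ref{A is 1}, $G(\alpha,\beta,\alpha,\beta)=1$, so the product-rule identity
\begin{equation*}
\partial_\alpha\partial_\beta G = G\cdot\bigl(\partial_\alpha\log G\cdot\partial_\beta\log G + \partial_\alpha\partial_\beta\log G\bigr)
\end{equation*}
reduces at the evaluation slice to $\partial_\alpha\log G\,\partial_\beta\log G + \partial_\alpha\partial_\beta\log G$, where $\log G=\log Y+\log A$. A direct differentiation of the explicit formula for $Y$, followed by the substitution $\gamma=\alpha,\delta=\beta$, causes massive cancellation and yields
\begin{equation*}
\partial_\alpha\log Y\big|_{(\alpha,\beta,\alpha,\beta)} = \frac{\zeta'}{\zeta}(1+2\alpha) - \frac{L'}{L}(1+2\alpha),
\end{equation*}
\begin{equation*}
\partial_\alpha\partial_\beta\log Y\big|_{(\alpha,\beta,\alpha,\beta)} = \Bigl(\tfrac{\zeta'}{\zeta}\Bigr)'(1+\alpha+\beta) + \Bigl(\tfrac{L'}{L}\Bigr)'(1+\alpha+\beta),
\end{equation*}
and the analogous formula for $\partial_\beta\log Y$.

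Next I would argue that the $\log A$ contributions and the $L$-function pieces are absorbed into the error term. Under GRH the nearest singularities of $L'/L(1+2\alpha)$ sit at $\mathrm{Re}(\alpha)=-1/4$, while the partials of $\log A$ arising from a convergent Euler product are holomorphic near the origin and, via the matching of singularities between $G$ and $Y$, extend analytically to the larger region $\mathrm{Re}(\alpha),\mathrm{Re}(\beta)>-1/5$. Shifting both contours to $\mathrm{Re}(\alpha)=\mathrm{Re}(\beta)=-1/5$ then provides $O_\Phi(X^{-1/5})$ decay via the $X^{\alpha+\beta}$ factor for every subintegral whose integrand contains a regular factor from $\log A$ or from an $L$-function derivative. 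This leaves only the pure zeta pieces as main contributions, namely
\begin{equation*}
J^{(1)} := \iint \frac{\zeta'}{\zeta}(1+2\alpha)\frac{\zeta'}{\zeta}(1+2\beta)\tilde\Phi\bigl(\tfrac{1}{2}+\alpha\bigr)\tilde\Phi\bigl(\tfrac{1}{2}+\beta\bigr) X^{\alpha+\beta}\,d\alpha\,d\beta
\end{equation*}
and
\begin{equation*}
J^{(2)} := \iint \Bigl(\tfrac{\zeta'}{\zeta}\Bigr)'(1+\alpha+\beta)\tilde\Phi\bigl(\tfrac{1}{2}+\alpha\bigr)\tilde\Phi\bigl(\tfrac{1}{2}+\beta\bigr) X^{\alpha+\beta}\,d\alpha\,d\beta.
\end{equation*}

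For $J^{(1)}$ the integrand factors as a function of $\alpha$ times a function of $\beta$, so $J^{(1)}$ factors as the product of two identical single integrals; shifting past the simple pole with residue $-1/2$ at the origin makes each single integral equal $2\pi i\cdot(-1/2)\cdot\tilde\Phi(1/2)+O(X^{-1/5})$, giving $J^{(1)}=-\pi^2\tilde\Phi(1/2)^2+O_\Phi(X^{-1/5})$. For $J^{(2)}$ I would change variables to $u=\alpha+\beta,\,v=\alpha$ and invoke Mellin--Plancherel to evaluate the inner $v$-integral as $2\pi i\,\widetilde{\Phi^2}(1+u)$; shifting the remaining $u$-contour past the double pole of $(\zeta'/\zeta)'(1+u)=u^{-2}+O(1)$ at $u=0$ yields the residue
\begin{equation*}
\frac{d}{du}\left[X^u\,\widetilde{\Phi^2}(1+u)\right]\bigg|_{u=0} = \log X\int_0^\infty\!\Phi(x)^2\,dx + \int_0^\infty\!\Phi(x)^2\log x\,dx,
\end{equation*}
which after multiplying by the prefactor $(2\pi i)^2=-4\pi^2$ produces $-C_\Phi\log X-C'_\Phi+O_\Phi(X^{-1/5})$. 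The principal technical obstacle will be establishing the claimed analytic continuation and uniform vertical-line bounds for $\partial_\alpha\log A$, $\partial_\beta\log A$, and $\partial_\alpha\partial_\beta\log A$, together with verifying that the mixed cross terms of the form $\frac{\zeta'}{\zeta}(1+2\alpha)\,\partial_\beta\log A|_{(\alpha,\beta,\alpha,\beta)}$ remain holomorphic in the surviving variable after the first residue at $\alpha=0$ has been taken, so that the subsequent contour shift yields the required $X^{-1/5}$ decay.
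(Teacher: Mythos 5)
Your proposal is correct, and its skeleton matches the paper's: expand $\partial_\alpha\partial_\beta G$ on the diagonal slice via logarithmic derivatives, discard every term carrying a holomorphic factor (the derivatives of $\log A$ and, in your bookkeeping, the $L'/L$ factors) by contour shifts --- exactly the content of the paper's Lemmas \ref{holomorphic integrals} and \ref{holomorphic bound 2} --- and evaluate the surviving zeta pieces by residues. Two differences are worth recording. First, the paper absorbs the Dirichlet $L$-factors into $\mathcal{A}=G/\mathcal{Y}$ rather than keeping them in $Y$, so its main-term identity (\ref{dadb Y}) contains no explicit $L'/L$ cross terms; your version generates them but correctly disposes of them using the GRH-holomorphy of $L'/L(1+2\alpha)$ in $\mathrm{Re}(\alpha)>-1/4$. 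Second, and more substantively, the paper keeps $\tfrac{\zeta''}{\zeta}(1+\alpha+\beta)$ and $-\bigl(\tfrac{\zeta'}{\zeta}\bigr)^2(1+\alpha+\beta)$ as two separate double integrals, evaluates each by iterated residues (producing Stieltjes-constant terms $\pm 2\gamma_0 C_\Phi$ that cancel only when the two boxed results are added), and then identifies the resulting $\beta$-integrals with $C_\Phi$ and $C'_\Phi$ via the Fourier-analytic identities (\ref{C constant 2}) and (\ref{C' constant 2}). Your route combines the two into $\bigl(\tfrac{\zeta'}{\zeta}\bigr)'(1+\alpha+\beta)$, changes variables to $u=\alpha+\beta$, and uses the Mellin convolution identity $\int\tilde\Phi\bigl(\tfrac12+v\bigr)\tilde\Phi\bigl(\tfrac12+u-v\bigr)\,dv=2\pi i\,\widetilde{\Phi^2}(1+u)$, so that the double pole at $u=0$ (with no residual $u^{-1}$ term) hands you $\log X\int\Phi^2+\int\Phi^2\log x$ directly; this eliminates both the $\gamma_0$ bookkeeping and the need for the separate lemma identifying the contour integrals with $C_\Phi$ and $C'_\Phi$, and it reproduces the same total $-C_\Phi\log X-C'_\Phi$. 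The remaining work you flag --- analytic continuation and polynomial growth of the $\log A$ derivatives left of the imaginary axis, and holomorphy in the surviving variable of the residue of each mixed cross term after the pole at $\alpha=0$ is taken --- is genuine but routine, and is precisely what the paper's Lemma \ref{holomorphic bound 2} supplies.
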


\begin{lemma} \label{second total} 
We have
\begin{equation}
I_{2}+I_{3} = \left\{
\begin{array}{l l l}
O_{\Phi}\left(X^{-\epsilon}\right)  & \text{ if } \lambda > 1 \\
2 \pi^2\left(\tilde{\Phi}\left(\frac 1 2\right)\right)^{2} +O_{\Phi}\left(X^{-\epsilon}\right) & \text{ if } \frac 1 2 < \lambda < 1 \\
4 \pi^2\left(\tilde{\Phi}\left(\frac 1 2\right)\right)^{2} + O_{\Phi}\left(X^{-\epsilon}\right)& \text{ if } \lambda < \frac 1 2,
\end{array}\right.
\end{equation}
where $\epsilon > 0$ is a constant $($depending on $\lambda)$.
\end{lemma}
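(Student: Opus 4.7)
The plan is to exploit the symmetries of the Euler product $G$ to reduce $I_2+I_3$ to twice a single integral, then extract the main behavior of that integral by contour shifting and residue calculus, with the active pole contributions depending on the regime of $\lambda$.

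First I would establish $I_2 = I_3$. The coefficient symmetries $\delta_p(m,n,h,l) = \delta_p(n,m,h,l) = \delta_p(m,n,l,h)$ recorded in Section~3 propagate through the Euler product to give $G(\alpha,\beta,\gamma,\delta) = G(\beta,\alpha,\gamma,\delta) = G(\alpha,\beta,\delta,\gamma)$. Relabeling $\alpha \leftrightarrow \beta$ in the integrand of $I_2$, commuting the mixed partial, and invoking the $(\gamma,\delta)$-symmetry to match the diagonal evaluation $|_{\gamma=\beta,\delta=\alpha}$ against $|_{\gamma=\alpha,\delta=\beta}$ identifies its integrand with that of $I_3$. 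Thus $I_2 + I_3 = 2 I_3$ and it suffices to compute $I_3$ in each regime and double the result.

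Next I would analyze the pole structure of $I_3$'s integrand as a meromorphic function of $(\alpha, \beta)$ after the derivatives and diagonal substitution. Near the origin, $G(-\alpha,\beta,\gamma,\delta)$ contributes a simple pole at $\alpha = 0$ from $\zeta(1-2\alpha)$ (with the apparent ``zero'' from $1/\zeta(1-\alpha+\gamma)|_{\gamma=\alpha}$ reducing what would be a double pole after $\partial_\alpha$ to a simple one). The partial $\partial_\beta$ similarly reveals a simple pole at $\beta = 0$ from $\zeta(1+2\beta)$; the formal cancellation against $\zeta(1+\beta+\delta)|_{\delta=\beta}$ is defeated by the derivative. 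By Lemma~\ref{A is 1}, the convergent Euler product $A$ equals $1$ at the origin on the diagonal so contributes only trivially to the leading residue. At the double pole $(\alpha,\beta) = (0,0)$, one has $\tilde\Phi(\tfrac12+\alpha)\tilde\Phi(\tfrac12+\beta) = \tilde\Phi(1/2)^2$ and $X^{\alpha(1-2\lambda)+\beta} = 1$, so the double residue produces a constant multiple of $\tilde\Phi(1/2)^2$, with the explicit constant traceable to $(\pi/2)^{2\alpha}|_{\alpha=0}=1$, the residue $-1/2$ of $\zeta(1-2\alpha)$, and the factors of $2\pi i$ arising from the Cauchy integrations in each variable (producing the $\pi^2$).

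For the case analysis, I would shift contours according to the sign of the exponent in each variable. For $\lambda > 1$, on the initial contour $\mathrm{Re}(\alpha) = \epsilon$, $\mathrm{Re}(\beta) = \epsilon'$ chosen with $\epsilon(2\lambda-1) > \epsilon'$, the factor $X^{\epsilon(1-2\lambda)+\epsilon'}$ is $O(X^{-c})$; combining with rapid decay of $\tilde\Phi$ in vertical strips and polynomial bounds on $\zeta'/\zeta$, $L'/L$ gives $I_3 = O(X^{-\epsilon})$ directly, with no residue extracted. For $\lambda < 1/2$, both $X^{\alpha(1-2\lambda)}$ and $X^{\beta}$ decay under leftward shifts, so pushing both contours past the origin and extracting the double residue yields $I_3 = 2\pi^2 \tilde\Phi(1/2)^2 + O(X^{-\epsilon})$, with the residual tail integral along $(\mathrm{Re}(\alpha), \mathrm{Re}(\beta)) = (-\epsilon'', -\epsilon''')$ absorbed in the error. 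For $1/2 < \lambda < 1$ the contour shifts are asymmetric: the $\beta$-leftward shift is favorable but the $\alpha$-leftward shift enlarges the tail, and here one extracts the $\beta = 0$ residue to obtain a one-dimensional $\alpha$-integral whose further treatment balances the $\alpha = 0$ residue against the tail, producing $\pi^2 \tilde\Phi(1/2)^2$, exactly half of the $\lambda < 1/2$ value. The principal obstacle is this intermediate regime: the precise residue bookkeeping requires verifying that after all cancellations the leading constant is exactly $\pi^2 \tilde\Phi(1/2)^2$ with all lower-order contributions absorbed into $O(X^{-\epsilon})$, and that the many auxiliary terms generated by $\partial_\beta\partial_\alpha$ acting on the prefactor, on the various $L$-function factors, and on the Euler product correction $A$ (cf.\ Lemma~\ref{A integral}) contribute only to the error and not to the leading constant.
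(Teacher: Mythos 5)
Your reduction $I_2=I_3$ is the critical flaw, and it propagates into a wrong answer in the middle regime. The integrands of $I_2$ and $I_3$ are indeed exchanged by relabeling $\alpha\leftrightarrow\beta$ (using $G(\alpha,\beta,\gamma,\delta)=G(\beta,\alpha,\delta,\gamma)$), but that relabeling also exchanges the two contours $\mathrm{Re}(\alpha)=\epsilon$ and $\mathrm{Re}(\beta)=\epsilon'$, and the common integrand has a simple pole on the diagonal $\beta=\alpha$: after differentiation and restriction to $(\gamma,\delta)=(\alpha,\beta)$, the $\mathcal{Y}$-part contributes the term $-\frac{\zeta'}{\zeta}(1-\alpha+\beta)$ (respectively its mirror $-\frac{\zeta'}{\zeta}(1+\alpha-\beta)$). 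Hence $I_2$ and $I_3$ differ by exactly the residue along $\beta=\alpha$, a one-dimensional integral carrying the weight $X^{\alpha(1-2\lambda)+\alpha}=X^{2\alpha(1-\lambda)}$; shifting it past the pole of $\zeta(1-2\alpha)$ at $\alpha=0$ shows this difference equals $\pm\bigl(2\pi^2\tilde\Phi(\tfrac12)^2+O(X^{-\epsilon})\bigr)$ for $\lambda<1$. This is precisely why the paper fixes an ordering $0<\mathrm{Re}(\alpha)<\mathrm{Re}(\beta)$ and finds that the diagonal pole contributes to one of the two integrals and not to the other (Section 8.2): the $\alpha\leftrightarrow\beta$ symmetry holds only for the sum, not termwise. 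A sanity check: if $I_2=I_3$ were true, your own trivial bound for $\lambda>1$ (take $\epsilon'$ much smaller than $\epsilon(2\lambda-1)$) would apply verbatim for every $\lambda>\tfrac12$ and force $I_2+I_3=O(X^{-c})$ there, contradicting the lemma.

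The second, related gap is that your inventory of singularities omits the diagonal poles at $\beta=\alpha$ and $\beta=-\alpha$, so you attribute the main term for $\tfrac12<\lambda<1$ to the wrong source. The $(\alpha,\beta)=(0,0)$ contribution coming from $\zeta(1-2\alpha)\cdot\frac{\zeta'}{\zeta}(1+2\beta)$ (and its mirror in the other integral) carries the weight $X^{\alpha(1-2\lambda)}$ after taking the $\beta=0$ residue, and therefore survives only for $\lambda<\tfrac12$; for $\lambda>\tfrac12$ one shifts the $\alpha$-contour to the right and the whole term is $O(X^{-(2\lambda-1)/5})$ --- there is no ``balancing'' that leaves behind $\pi^2\tilde\Phi(\tfrac12)^2$. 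The entire $2\pi^2\tilde\Phi(\tfrac12)^2$ in the range $\tfrac12<\lambda<1$ comes from the diagonal pole $\beta=\alpha$ followed by the pole at $\alpha=0$, whose exponent $X^{2\alpha(1-\lambda)}$ is the only mechanism capable of producing a transition at $\lambda=1$; your described pole structure cannot generate that threshold at all. The correct bookkeeping for the sum is: the poles at $\beta=0$ and $\alpha=0$ together give $2\pi^2\tilde\Phi(\tfrac12)^2\cdot\mathbf{1}_{\lambda<1/2}$, the diagonal pole $\beta=\alpha$ gives $2\pi^2\tilde\Phi(\tfrac12)^2\cdot\mathbf{1}_{\lambda<1}$, and the pole at $\beta=-\alpha$ is always negligible; adding these reproduces the three regimes of the lemma, whereas doubling any single integral cannot.
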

\begin{lemma}\label{fourth total}
We have
\begin{equation}
I_{4} = \left\{\begin{array}{l l}
C_{\Phi}(1 - 2\lambda)\log X+\kappa+ O_{\Phi}\left(X^{-\epsilon}\right)& \text{ if } \frac 1 2  < \lambda\\
O_{\Phi}\left(X^{-\epsilon}\right) & \text{ if } \frac 1 2  > \lambda,
\end{array}\right.
\end{equation}

where
\begin{equation}
\kappa := C_{\Phi}\left(\log \left(\frac {\pi^2}{4}\right)+2\right)+C_{\Phi,\zeta}-C_{\Phi,L} +C'_\Phi-\pi^2\left(\tilde{\Phi}\left(\frac 1 2\right)\right)^{2}-A_{\Phi}'.
\end{equation}

Here $\epsilon > 0$ is a constant $($depending on $\lambda)$, and $C_{\Phi,\zeta}$, $C_{\Phi,L}$, and $A_{\Phi}'$, are as in $(\ref{zeta constant})$, $(\ref{L constant})$, and $(\ref{A constant})$, respectively.
\end{lemma}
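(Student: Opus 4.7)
The plan is to evaluate the double contour integral $I_4$ by shifting contours and extracting residues, with the sign of $1-2\lambda$ determining the direction of the shift. First, I would simplify the integrand by computing
\[
H(\alpha,\beta) \ := \ \partial_\alpha\partial_\beta \left[\left(\frac{\pi}{2}\right)^{2(\alpha+\beta)}\frac{G(-\alpha,-\beta,\gamma,\delta)}{1-2(\alpha+\beta)}\right]\bigg|_{(\gamma,\delta)=(\alpha,\beta)}.
\]
Using the decomposition $G = Y\cdot A$ and observing that at the evaluation locus the factors $\zeta(1-\alpha+\gamma)^{-1}$ and $\zeta(1-\beta+\delta)^{-1}$ in $Y(-\alpha,-\beta,\gamma,\delta)$ each contribute a simple zero at $\gamma = \alpha$ and $\delta = \beta$ (since $\zeta(s)$ has a simple pole at $s=1$), the derivatives $\partial_\alpha\partial_\beta$ extract the leading coefficient of the Taylor expansion in $(\gamma-\alpha)(\delta-\beta)$. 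After these cancellations, $H(\alpha,\beta)$ becomes an explicit meromorphic function involving $\zeta(1\pm2\alpha)$, $\zeta(1\pm2\beta)$, $\zeta(1\pm(\alpha+\beta))$, $\zeta(1\pm(\alpha-\beta))$, the corresponding $L$-function factors attached to $\chi_1$, the exponential $(\pi/2)^{2(\alpha+\beta)}$, the factor $(1-2(\alpha+\beta))^{-1}$ (whose derivatives raise the order of the pole at $\alpha+\beta = 1/2$), and the Euler product $A_\beta(\alpha)$ from Lemma \ref{A integral}.

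Next, I would change variables to $s = \alpha + \beta$ and $t = \alpha - \beta$, so that the $X$-dependence becomes $X^{s(1-2\lambda)}$, and reduce the analysis to a shift in $s$. Depending on the sign of $1-2\lambda$, one of the following scenarios occurs: either the relevant singularity to cross is the simple pole of $(1-2s)^{-1}$ at $s = 1/2$, whose residue is of size $X^{(1-2\lambda)/2} = O(X^{-\epsilon})$; or the relevant singularities lie at $s = 0$, where the product $\zeta(1+s)\zeta(1-s)$ creates a second-order pole. In the latter case, this double pole produces the main term $C_\Phi(1-2\lambda)\log X$, since $\frac{\partial}{\partial s}X^{s(1-2\lambda)}\big|_{s=0} = (1-2\lambda)\log X$ and the residue of a second-order pole extracts exactly this $s$-derivative. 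Simultaneously, the simple poles at $\alpha = 0$ and $\beta = 0$ from $\zeta(1-2\alpha),\ \zeta(1-2\beta)$, together with the Laurent expansions $\zeta(1+u) = u^{-1}+\gamma_0+O(u)$ and the analogous expansion of $L(s,\chi_1)$ near $s=1$, the evaluation of $(\pi/2)^{2s}$ and its $s$-derivative at $s = 0$ (producing $C_\Phi\log(\pi^2/4)$), the value $\tfrac{d}{d\alpha}A_\beta(\alpha)|_{\alpha = -\beta}$ from Lemma \ref{A integral} (yielding $A'_\Phi$), and the remaining $t$-integral against $\tilde\Phi(1/2+\alpha)\tilde\Phi(1/2+\beta)$ (producing $\tilde\Phi(1/2)^2$ and $C'_\Phi$ contributions) together assemble into $\kappa$.

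The main obstacle is the careful bookkeeping required to pin down $\kappa$: every Laurent coefficient of $\zeta$, $L$, $(\pi/2)^{2s}$, and $A_\beta$ at the relevant points contributes, and a single miscounted term would propagate into the final constant. To secure the $O(X^{-\epsilon})$ error one must additionally bound the integrand on the shifted contours uniformly in the imaginary directions, using GRH-based bounds for $\zeta$ and $L(\cdot,\chi_1)$ on vertical lines together with the rapid decay of $\tilde\Phi$; the technical lemmas of Section 6 are designed for exactly this. The cleanest organization is to perform the $s$-shift and $t$-residue calculations as a sequence of one-dimensional computations, with the convergence of the Euler product $A_\beta(\alpha)$ on the relevant vertical strips (established in Section 4) providing the required analytic control over the non-zeta piece.
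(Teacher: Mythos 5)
Your proposal follows essentially the same route as the paper: the double derivative collapses via the two simple zeros of $1/\zeta(1-\alpha+\gamma)$ and $1/\zeta(1-\beta+\delta)$ to the explicit meromorphic integrand of the paper's display (9.1), and the main term and $\kappa$ are then extracted from exactly the poles you identify — the double pole at $\alpha+\beta=0$ (giving $C_\Phi(1-2\lambda)\log X$, the $\log(\pi^2/4)$ and ``$+2$'' pieces, $C_{\Phi,\zeta}$, $C_{\Phi,L}$, $C'_\Phi$, and $A'_\Phi$ via Lemma \ref{A integral}) together with the simple poles at $\alpha=0$ and $\beta=0$ (giving $-\pi^2\tilde{\Phi}(\tfrac12)^2$); your $(s,t)=(\alpha+\beta,\alpha-\beta)$ substitution and the option of crossing the pole of $(1-2s)^{-1}$ at $s=\tfrac12$ are cosmetic variants of the paper's iterated $\alpha$-then-$\beta$ shifts, which simply stop at $\mathrm{Re}(\alpha)=1/5$ without crossing that pole (and note that a residual shift in $t$ is still needed to produce the $\tilde{\Phi}(\tfrac12)^2$, $C_\Phi$, and $C'_\Phi$ integrals, so the reduction is not purely one-dimensional). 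One point to make explicit: your direction-of-shift logic places the main term in the regime $1-2\lambda>0$, i.e.\ $\lambda<\tfrac12$, and the $O(X^{-\epsilon})$ bound when $\lambda>\tfrac12$; this is what the paper's proof actually establishes and what consistency with Conjecture \ref{conj} demands, but it is the opposite of the printed case labels in the lemma statement, so you should state your case assignment explicitly rather than leaving it implicit in ``the sign of $1-2\lambda$.''
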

Conjecture \ref{conj} follows upon inserting the results from Lemma \ref{1st total}, Lemma \ref{second total}, and Lemma \ref{fourth total}, into Conjecture \ref{full conjecture}.  Note that when $\lambda >1$, Conjecture \ref{full conjecture} moreover agrees with Theorem \ref{trivial regime theorem}.

\section{Auxiliary Lemmas}
Before proceeding to the proofs of Lemmas \ref{1st total}, \ref{second total}, and \ref{fourth total}, we will prove a few auxiliary lemmas that will be used frequently in the rest of the paper.
\begin{lemma}\label{countour bound}
Let $h(\alpha)$ be holomorphic in $\Omega := \left\{-\frac{1}{4}< \textnormal{Re}(\alpha)< \epsilon\right\}$ for some $\epsilon>0$, except for possibly at a finite set of poles.  Moreover, suppose that $h(\alpha)$ does not grow too rapidly in $\Omega$, i.e., there exists a fixed $d>0$ such that $h(\alpha) \ll |\alpha|^{d}$ away from the poles in $\Omega$.  Set
\begin{equation}
f(\alpha) := h(\alpha)\tilde{\Phi}\left(\frac 1 2+\alpha \right)X^{\alpha},
\end{equation}
where $\alpha, \beta,$ and $\tilde{\Phi}$ are as above.  Then

\begin{equation}
\int_{(\epsilon)} f(\alpha)d\alpha =2\pi i\cdot \sum_{k}\textnormal{Res}(f,a_{k})+O\left( X^{-\frac{1}{5}}\right),
\end{equation}

where $\textnormal{Res}(f,a_{k})$ denotes the residue of $f$ at each pole $a_{k} \in \Omega$.
\end{lemma}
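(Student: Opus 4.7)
The plan is to move the vertical contour in $\int_{(\epsilon)}f(\alpha)\,d\alpha$ to the left, from $\textnormal{Re}(\alpha)=\epsilon$ to $\textnormal{Re}(\alpha)=-1/5$, using the residue theorem to pick up the poles of $f$ in the strip, and then to bound the shifted integral trivially using $|X^{\alpha}|=X^{-1/5}$.

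Concretely, I would fix a large height $T>0$ and apply the residue theorem to the rectangular contour $\mathcal{R}_{T}$ with vertical sides $\textnormal{Re}(\alpha)=\epsilon$ and $\textnormal{Re}(\alpha)=-1/5$ and horizontal sides $\textnormal{Im}(\alpha)=\pm T$, chosen large enough to enclose every pole $a_{k}$ of $h$ in $\Omega$ (there are only finitely many by hypothesis). This gives
\begin{equation}
\int_{\epsilon-iT}^{\epsilon+iT}f(\alpha)\,d\alpha\ =\ 2\pi i\sum_{k}\textnormal{Res}(f,a_{k})\ +\int_{-1/5-iT}^{-1/5+iT}f(\alpha)\,d\alpha\ +\ (\text{horizontal pieces}),
\end{equation}
where the strip $-1/5\le\textnormal{Re}(\alpha)\le\epsilon$ lies inside $\Omega=\{-1/4<\textnormal{Re}(\alpha)<\epsilon\}$ since $-1/5>-1/4$.

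Next I would dispose of the horizontal pieces as $T\to\infty$. Because $\Phi\in C_{c}^{\infty}(0,\infty)$, integration by parts in the Mellin transform $N$ times yields $\tilde{\Phi}(1/2+\alpha)\ll_{N,\sigma}(1+|\textnormal{Im}(\alpha)|)^{-N}$ uniformly for $\textnormal{Re}(\alpha)$ in any compact subset of $(-1/2,\infty)$. Combined with the polynomial bound $h(\alpha)\ll|\alpha|^{d}$ away from the poles and the uniform bound $|X^{\alpha}|\le\max(X^{\epsilon},X^{-1/5})$ on the horizontal segments, this forces the horizontal contributions to tend to $0$ as $T\to\infty$.

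Finally I would bound the shifted vertical integral. On the line $\textnormal{Re}(\alpha)=-1/5$ we have $|X^{\alpha}|=X^{-1/5}$, and the same Mellin decay of $\tilde{\Phi}(1/2+\alpha)$ against the polynomial growth $|h(\alpha)|\ll(1+|\textnormal{Im}(\alpha)|)^{d}$ makes
\begin{equation}
\int_{\R}\bigl|h(-1/5+it)\bigr|\cdot\bigl|\tilde{\Phi}(3/10+it)\bigr|\,dt\ =\ O(1),
\end{equation}
so the entire shifted integral is $O(X^{-1/5})$, yielding the claimed formula. The only delicate point is the justification of the horizontal decay, since one needs Mellin decay of $\tilde{\Phi}$ uniform in the horizontal strip together with the a priori growth condition on $h$; everything else is a standard residue-shift argument.
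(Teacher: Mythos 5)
Your proposal is correct and is essentially the paper's own proof: the paper likewise applies the residue theorem to a rectangle with horizontal sides at height $\pm T$, kills the horizontal segments using the rapid decay $\tilde{\Phi}\left(\frac 1 2+\sigma+iT\right)\ll T^{-A}$ together with the polynomial growth of $h$, and bounds the left vertical segment trivially via $|X^{\alpha}|$. The only (immaterial) difference is that the paper places the shifted line at $\textnormal{Re}(\alpha)=-\frac{1}{4}+\epsilon$ and then observes $X^{-\frac{1}{4}+\epsilon}\ll X^{-\frac{1}{5}}$, whereas you shift directly to $\textnormal{Re}(\alpha)=-\frac{1}{5}$.
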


\begin{proof}
Consider the contour integral drawn counter-clockwise along the closed box
\begin{equation}
C_{T} := V_{1} \cup H_{1} \cup V_{2} \cup  H_{2},
\end{equation}
where
\begin{align}
\left\{
\begin{array}{l l}
V_{1} &:= [\epsilon-iT,\epsilon+iT]\\
H_{1} &:= [\epsilon+iT,-\frac 1 4+\epsilon+iT]\\
V_{2} &:= [-\frac 1 4+\epsilon+iT,-\frac 1 4+\epsilon-iT]\\
H_{2} &:= [-\frac 1 4+\epsilon-iT,\epsilon-iT].
\end{array} \right.
\end{align}

By Cauchy's residue theorem,

\begin{align}
\begin{split}
\int_{(\epsilon)}f(\alpha) ~d\alpha &= 2\pi i \cdot \sum_{k}\textnormal{Res}(f,a_{k})- \lim_{T \rightarrow \infty}\bigg ( \int_{H_{1} \cup V_{2} \cup H_{2}} f(\alpha)d\alpha\bigg ).
\end{split}
\end{align}

Set $\alpha = \sigma+iT$.  By the properties of the Mellin transform, we find that for any fixed $A >0$,
\begin{equation}
\tilde{\Phi}\left(\frac 1 2 +it \right) \ll \textnormal{min}(1,|t|^{-A}).
\end{equation}
Since moreover $h(\alpha)$ does not grow too rapidly, we bound
\begin{align}
\begin{split}
\int_{H_{1}} f(\alpha)d\alpha &= \int_{\epsilon}^{-1/4+\epsilon}h(\sigma+iT)\tilde{\Phi}\left(\frac 1 2+\sigma+iT\right)X^{\sigma+iT}d\sigma \ll \frac{X^{\epsilon}}{T^{A}},
\end{split}
\end{align}
so that 
\begin{equation}
\lim_{T \rightarrow \infty}\int_{H_{1}} f(\alpha)d\alpha = 0,
\end{equation}
and similarly 
\begin{equation}
\lim_{T \rightarrow \infty}\int_{H_{2}} f(\alpha)d\alpha = 0.
\end{equation}
Finally, we bound

\begin{align}
\begin{split}
\lim_{T \rightarrow \infty} \int_{V_{2}} f(\alpha)d\alpha &= -i \int_{\R}h\left(-\frac{1}{4}+\epsilon+it\right)\tilde{\Phi}\left(\frac{1}{4}+\epsilon+it\right)X^{(-\frac{1}{4}+\epsilon+it)}dt\\
&\ll \int_{\R} \textnormal{min}(1,|t|^{-A})X^{-\frac{1}{4}+\epsilon}X^{it}dt \ll X^{-\frac{1}{5}},
\end{split}
\end{align}
from which the theorem then follows.
\end{proof}

\begin{lemma}\label{holomorphic integrals} 
Let $\alpha, \beta, \tilde{\Phi}$ be as above.  Suppose $h(\alpha,\beta)$ is holomorphic\footnote{A function $f:\Omega \subset \C^{2} \mapsto \C$ is said to be \textit{homolorphic} if it is holomorphic in each variable separately.} in the region 
\begin{equation}
\Omega \times \Omega := \left\{(\alpha,\beta): -\frac{1}{4}< \textnormal{Re}(\alpha),\textnormal{Re}(\beta)< \epsilon\right\}
\end{equation}
for some $\epsilon >0$, and moreover that $h(\alpha,\beta)$ does not grow too rapidly in $\Omega \times \Omega$, i.e., does not grow too rapidly in each variable, separately.  Then
\begin{equation}
\int_{(\epsilon')}\tilde{\Phi}\left (\frac 1 2+\beta \right)\int_{(\epsilon)}h(\alpha,\beta)\tilde{\Phi}\left ( \frac 1 2+\alpha \right)X^{\alpha+\beta} ~d\alpha d\beta \ll X^{-\frac{2}{5}}.
\end{equation}
\end{lemma}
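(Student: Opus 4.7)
The plan is to iterate the contour-shift argument of Lemma \ref{countour bound} in each of the two variables. Since $h(\alpha,\beta)$ is holomorphic throughout $\Omega\times\Omega$ (with no poles in either variable, separately), neither shift picks up any residues, and the double integral gets the full benefit of two $X^{-1/4+\eta}$ decay factors.

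First, I fix $\beta$ on the vertical line $\textnormal{Re}(\beta)=\epsilon'$. Since $h(\cdot,\beta)$ is holomorphic in $\Omega$ with no poles, Lemma \ref{countour bound} (applied with an empty residue sum) allows me to replace the inner $\alpha$-contour at $\textnormal{Re}(\alpha)=\epsilon$ by one at $\textnormal{Re}(\alpha)=-\tfrac{1}{4}+\eta$ for any small $\eta>0$, the horizontal edges of the box vanishing in the limit thanks to the super-polynomial decay of $\tilde{\Phi}(\tfrac{1}{2}+\alpha)$ along horizontal lines combined with the polynomial growth of $h$. The resulting inner integral is bounded by $X^{-1/4+\eta}$ times an $L^1$ factor in the variable $\beta$.

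Next, I use Fubini (justified by the hypothesized polynomial growth of $h$ in each variable against the super-polynomial decay of both copies of $\tilde{\Phi}$) to interchange the order of integration and repeat the contour-shift argument in $\beta$: moving $\textnormal{Re}(\beta)$ from $\epsilon'$ to $-\tfrac{1}{4}+\eta'$ and again collecting no residues by holomorphicity of $h(\alpha,\cdot)$. After both shifts, the integrand is bounded in modulus by
\begin{equation*}
|h(\alpha,\beta)|\cdot\left|\tilde{\Phi}\left(\tfrac{1}{2}+\alpha\right)\right|\cdot\left|\tilde{\Phi}\left(\tfrac{1}{2}+\beta\right)\right|\cdot X^{-\tfrac{1}{2}+\eta+\eta'},
\end{equation*}
and the remaining double integral over the two shifted vertical lines converges absolutely by the rapid decay of $\tilde{\Phi}$. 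Choosing $\eta,\eta'$ with $\eta+\eta'\leq\tfrac{1}{10}$ produces the claimed bound $O(X^{-2/5})$.

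The main obstacle will be the bookkeeping between the two contour shifts: one must verify that the polynomial-growth hypothesis on $h$ (interpreted separately in each variable) controls the horizontal box edges uniformly as the companion variable ranges along its vertical contour, and that the super-polynomial decay of $\tilde{\Phi}$ on vertical lines makes the resulting iterated integral absolutely convergent so that Fubini applies. This is standard but requires care, since ``does not grow too rapidly'' is a slightly informal hypothesis that must be unpacked as a separable polynomial bound of the form $|h(\alpha,\beta)|\ll(1+|\alpha|)^{d_1}(1+|\beta|)^{d_2}$ on the relevant polystrip.
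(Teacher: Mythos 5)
Your proposal is correct and follows essentially the same route as the paper: apply the single-variable contour-shift lemma (Lemma \ref{countour bound}) in $\alpha$ for fixed $\beta$, note that holomorphy of $h$ means no residues are collected, and then repeat in $\beta$ so that the two power savings multiply; the paper records each saving as $X^{-1/5}$ and concludes $X^{-2/5}$, while you keep the sharper $X^{-1/4+\eta}$ per variable and then relax to the stated bound. The only cosmetic difference is that you invoke Fubini to reorder the integrals before the second shift, whereas the paper simply bounds the inner integral by $O(g(\beta)X^{-1/5})$ with $g$ of controlled growth and applies the lemma once more to the outer integral.
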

\begin{proof}

Set
\begin{equation}
f_{\beta}(\alpha) := h_{\beta}(\alpha)\tilde{\Phi}\left(\frac 1 2+\alpha \right)X^{\alpha},
\end{equation}

where $h_{\beta}(\alpha):=h(\alpha,\beta)$. Since $f_{\beta}$ is holomorphic, by an application of Lemma \ref{countour bound} we write
\begin{equation}
\int_{(\epsilon)} f_{\beta}(\alpha)d\alpha =O_{\beta}\left(X^{-\frac{1}{5}}\right)=O\left(g(\beta)\cdot X^{-\frac{1}{5}}\right),
\end{equation}
where $g$ does not grow too rapidly as a function of $\beta$.  By another application of Lemma \ref{countour bound}, it then follows that

\begin{align}
\begin{split}
\int_{(\epsilon')}\tilde{\Phi}\left ( \frac 1 2+\beta \right)X^{\beta} \bigg(\int_{(\epsilon)}f_{\beta}(\alpha)~d\alpha\bigg) d\beta &\ll \int_{(\epsilon')}g(\beta) \tilde{\Phi}\left (\frac 1 2+\beta \right)X^{-\frac{1}{5}+\beta} ~d\beta\\
&\ll X^{-\frac{2}{5}}.
\end{split}
\end{align}
\end{proof}

\begin{lemma}\label{holomorphic bound 2}
Let $\alpha, \beta$,$\tilde{\Phi}$, and $f_{\beta}$ be as above.  Suppose $f_{\beta}(\alpha)$ has a finite pole at $a_{k}(\beta)$ with residue $\textnormal{Res}(f_{\beta},a_{k}(\beta))$.  Moreover, suppose that for each $a_{k}(\beta)$, $\textnormal{Res}(f_{\beta},a_{k}(\beta))$ is holomorphic in $\Omega := \left\{-\frac{1}{4}< \textnormal{Re}(\beta)< \epsilon\right\}$ for some $\epsilon > 0$, and that $\textnormal{Res}(f_{\beta},a_{k}(\beta))$ does not grow too rapidly in $\Omega$.  Then
\begin{equation}
\int_{(\epsilon')}\tilde{\Phi}\left ( \frac 1 2+\beta \right)X^{\beta}\int_{(\epsilon)}f_{\beta}(\alpha) ~d\alpha d\beta \ll X^{-\frac{1}{5}}.
\end{equation}
\end{lemma}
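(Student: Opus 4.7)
The plan is to split the inner $\alpha$-integral into its residue contribution plus a small error via Lemma \ref{countour bound}, and then to handle each piece separately by a second application of the same lemma (for the residue piece) and by direct estimation (for the error piece).

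First, I would apply Lemma \ref{countour bound} to the inner integral $\int_{(\epsilon)} f_{\beta}(\alpha)\,d\alpha$, which yields
\begin{equation*}
\int_{(\epsilon)}f_{\beta}(\alpha)\,d\alpha \ =\ 2\pi i\sum_{k}\textnormal{Res}(f_{\beta},a_{k}(\beta))\ +\ O\!\left(g(\beta)\,X^{-1/5}\right),
\end{equation*}
where $g(\beta)$ is a function of $\beta$ which does not grow too rapidly. This is the same mechanism used in the proof of Lemma \ref{holomorphic integrals}: the error originates from the shifted contour $V_{2}$ in Lemma \ref{countour bound}, and the proof of that lemma tracks how the bound depends on the polynomial growth of the integrand.

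Next, for the error contribution, I would plug the $O(g(\beta)X^{-1/5})$ term into the outer $\beta$-integral and use the rapid decay of $\tilde{\Phi}(\tfrac{1}{2}+\beta)$ along vertical lines to bound
\begin{equation*}
\int_{(\epsilon')}\tilde{\Phi}\!\left(\tfrac{1}{2}+\beta\right)X^{\beta}\cdot O\!\left(g(\beta)X^{-1/5}\right)d\beta \ \ll\ X^{\epsilon'-1/5}.
\end{equation*}
Since $\epsilon'>0$ may be chosen arbitrarily small, this contribution is $O(X^{-1/5})$. For the residue contribution, the hypothesis gives that $\sum_{k}\textnormal{Res}(f_{\beta},a_{k}(\beta))$ is holomorphic in $\beta$ throughout $\Omega$ and does not grow too rapidly there. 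Hence I can apply Lemma \ref{countour bound} a second time, now in the $\beta$ variable, with $h(\beta):=\sum_{k}\textnormal{Res}(f_{\beta},a_{k}(\beta))$; since $h$ has no poles in $\Omega$, the contour shift picks up no residues and produces the bound
\begin{equation*}
\int_{(\epsilon')}\tilde{\Phi}\!\left(\tfrac{1}{2}+\beta\right)X^{\beta}\cdot 2\pi i\sum_{k}\textnormal{Res}(f_{\beta},a_{k}(\beta))\,d\beta \ \ll\ X^{-1/5}.
\end{equation*}
Combining the two estimates gives the desired bound.

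The only subtle point is ensuring that the implied constant in Lemma \ref{countour bound}, when the lemma is applied pointwise in $\beta$, depends on $\beta$ only through a function of at most polynomial growth. This is essentially built into the proof of Lemma \ref{countour bound} (the shifted-line integral $\int_{V_{2}}$ inherits the polynomial growth hypothesis on $h$), and it is precisely what is used in Lemma \ref{holomorphic integrals}, so no new idea is needed; the present lemma differs from Lemma \ref{holomorphic integrals} only in that we no longer get an additional $X^{-1/5}$ saving from the outer integral, since the residue sum is not itself small.
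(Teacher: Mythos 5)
Your proposal matches the paper's proof: both decompose the inner integral via Lemma \ref{countour bound} into a residue sum plus an $O\left(g(\beta)X^{-1/5}\right)$ error, bound the error's contribution to the outer integral, and then handle the residue piece by a second application of Lemma \ref{countour bound} in the $\beta$ variable, using the hypothesis that the residues are holomorphic and of moderate growth. The only cosmetic difference is that your direct estimate of the error piece on the line $\mathrm{Re}(\beta)=\epsilon'$ gives $X^{\epsilon'-1/5}$ rather than the paper's $O\left(X^{-2/5}\right)$ (obtained by shifting the $\beta$-contour as in Lemma \ref{holomorphic integrals}); to get literally $\ll X^{-1/5}$ you should shift that contour rather than appeal to ``$\epsilon'$ arbitrarily small,'' but this is a trivial repair.
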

\begin{proof}
By Lemma \ref{countour bound}, we write
\begin{equation}\label{inner error}
\int_{(\epsilon)} f_{\beta}(\alpha)d\alpha =2\pi i\cdot \sum_{k}\textnormal{Res}(f_{\beta},a_{k}(\beta))+O\left(g(\beta)\cdot X^{-\frac{1}{5}}\right),
\end{equation}
where, as in the proof of Lemma \ref{holomorphic integrals}, we explicitly note the dependence of the error term on $\beta$. Applying Lemma \ref{holomorphic integrals} to the error term in (\ref{inner error}), we obtain
\begin{align}
\int_{(\epsilon')}\tilde{\Phi}\left ( \frac 1 2+\beta \right)X^{\beta} &\int_{(\epsilon)}f_{\beta}(\alpha)~d\alpha d\beta \\
&= 2\pi i \cdot \int_{(\epsilon')}\tilde{\Phi}\left ( \frac 1 2+\beta \right)\sum_{k}\textnormal{Res}(f_{\beta},a_{k}(\beta))X^{\beta} d\beta+O\left(X^{-\frac{2}{5}}\right),
\end{align}
and finally by another application of Lemma \ref{countour bound},
\begin{align}
\int_{(\epsilon')}\tilde{\Phi}\left ( \frac 1 2+\beta \right)\sum_{k}\textnormal{Res}(f_{\beta},a_{k}(\beta))X^{\beta} d\beta &\ll X^{-\frac{1}{5}}.
\end{align}
\end{proof}
\begin{lemma} Let $C_{\Phi}$ and $C'_{\Phi}$ be as in $(\ref{mean value constants})$ and $(\ref{C constant 1})$, respectively.  Then
\begin{equation}\label{C constant 2}
C_{\Phi} = - 2\pi i \int_{(\epsilon')}\tilde{\Phi}\left(\frac 1 2+\beta \right)\tilde{\Phi}\left(\frac 1 2-\beta \right)~d\beta
\end{equation}

and
\begin{equation}\label{C' constant 2}
C'_{\Phi} = - 2 \pi i\int_{(\epsilon')}\tilde{\Phi}\left(\frac 1 2+\beta \right)\tilde{\Phi}'\left(\frac 1 2-\beta \right)~d\beta.
\end{equation}
\end{lemma}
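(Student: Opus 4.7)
My plan is to prove both identities as direct applications of Mellin inversion in Parseval form. The underlying fact is: for $\Phi, \Psi \in C_c^\infty(0,\infty)$ and any real $c$ for which the contours are admissible,
\begin{equation*}
\int_0^\infty \Phi(x)\Psi(x)\, dx \;=\; \frac{1}{2\pi i}\int_{(c)} \tilde\Phi(s)\,\tilde\Psi(1-s)\, ds,
\end{equation*}
which I would derive by writing $\Psi(x) = \frac{1}{2\pi i}\int_{(c)} \tilde\Psi(s)\, x^{-s}\, ds$ via Mellin inversion, swapping the order of integration, and recognizing the resulting inner $x$-integral as $\tilde\Phi(1-s)$.

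For $(\ref{C constant 2})$, I take $\Psi = \Phi$ and $c = \frac{1}{2}+\epsilon'$, then substitute $s = \frac{1}{2}+\beta$ to obtain
\begin{equation*}
\int_0^\infty \Phi(x)^2\, dx \;=\; \frac{1}{2\pi i}\int_{(\epsilon')} \tilde\Phi\bigl(\tfrac{1}{2}+\beta\bigr)\,\tilde\Phi\bigl(\tfrac{1}{2}-\beta\bigr)\, d\beta.
\end{equation*}
Multiplying by $4\pi^2$ and using the elementary identity $\tfrac{4\pi^2}{2\pi i} = -2\pi i$ yields the claim, under the interpretation of $C_\Phi$ as $4\pi^2\int_0^\infty \Phi(x)^2\, dx$ (the normalization consistent with the statement of Theorem \ref{trivial regime theorem} and with Lemma \ref{Split Primes}).

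For $(\ref{C' constant 2})$, I instead take $\Psi(x) := \log x \cdot \Phi(x)$. Differentiating the defining integral $\tilde\Phi(s) = \int_0^\infty \Phi(x)\, x^{s-1}\, dx$ in $s$ under the integral sign (permitted because $\Phi \in C_c^\infty(0,\infty)$) shows that the Mellin transform of $\Psi$ is precisely $\tilde\Phi'(s)$, so the same Parseval-style identity gives
\begin{equation*}
\int_0^\infty \log x \cdot \Phi(x)^2\, dx \;=\; \frac{1}{2\pi i}\int_{(\epsilon')} \tilde\Phi\bigl(\tfrac{1}{2}+\beta\bigr)\,\tilde\Phi'\bigl(\tfrac{1}{2}-\beta\bigr)\, d\beta,
\end{equation*}
and multiplying by $4\pi^2$ recovers $(\ref{C' constant 2})$. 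The only technical point is justifying the Fubini swap used to derive the Parseval-style identity; this follows from the super-polynomial decay of the Mellin transform of a $C_c^\infty(0,\infty)$ function on any vertical line, so no real obstacle arises beyond routine bookkeeping.
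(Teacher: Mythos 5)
Your proof is correct, and it is essentially the paper's argument in a different costume: both rest on a Parseval identity for the Mellin transform on a vertical line, and the factor $-2\pi i = 4\pi^2/(2\pi i)$ arises the same way. The cosmetic difference is that the paper substitutes $x=e^y$, identifies $\tilde{\Phi}\left(\frac 1 2 \pm it\right)$ with Fourier transforms of $\phi(y)=\Phi(e^y)e^{y/2}$ and $g(y)=y\,\Phi(e^y)e^{y/2}$, shifts the contour to $\textnormal{Re}(\beta)=0$, and invokes Plancherel together with $\overline{\widehat{\phi}(-t/2\pi)}=\widehat{\phi}(t/2\pi)$; you instead prove the Mellin--Parseval formula directly from Mellin inversion plus Fubini, which lets you work on the line $(\epsilon')$ without any contour shift or appeal to the reality of $\Phi$. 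Your identification of the Mellin transform of $\log x\cdot\Phi(x)$ with $\tilde{\Phi}'(s)$ matches the paper's computation of $\tilde{\Phi}'\left(\frac 1 2-\beta\right)$, and your remark that the lemma requires reading $C_{\Phi}$ as $4\pi^2\int_0^\infty\Phi(x)^2\,dx$ (rather than the literal definition in (\ref{mean value constants}), which lacks the square) correctly flags a genuine notational inconsistency in the paper that its own proof silently adopts as well. The Fubini justification you defer is indeed routine given the rapid decay of $\tilde{\Phi}$ on vertical lines, so there is no gap.
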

\begin{proof}
Set $\phi(y) = \Phi(e^{y})e^{y/2}$ so that

\begin{align}
\tilde{\Phi}\left(\frac 1 2 +it \right) = \int_{0}^{\infty}\Phi(x)x^{-\frac  1 2 +it}dx= \int_{\R} \phi(y)e^{iy t}dy = \widehat{\phi}\left(-\frac{t}{2\pi}\right),
\end{align}
and similarly $\tilde{\Phi}\left(\frac 1 2 -it \right) =  \widehat{\phi}\left(\frac{t}{2\pi}\right).$  By shifting the integral to Re$(\beta) = 0$ we obtain
\begin{align}
2\pi i \int_{(\epsilon')}\tilde{\Phi}\left(\frac 1 2+\beta\right)\widetilde{\Phi}\left(\frac{1}{2}-\beta\right)~d\beta= 2\pi i \int_{\R}\widehat{\phi}\left(-\frac{t}{2\pi}\right)\widehat{\phi}\left(\frac{t}{2\pi}\right)i ~dt.
\end{align}
Since $\overline{\widehat{\phi}\left(-\frac{t}{2\pi}\right)}=\widehat{\phi} \left(\frac{t}{2\pi}\right)$, we moreover have that

\begin{align}
\begin{split}
\int_{\R}\widehat{\phi}\left(-\frac{t}{2\pi}\right)\widehat{\phi}\left(\frac{t}{2\pi}\right)i dt&= \int_{\R}\left|\widehat{\phi}\left(-\frac{t}{2\pi}\right)\right|^2 i dt= 2\pi i\cdot \int_{\R}\left|\widehat{\phi}\left(x\right)\right|^2 dx\\
&= 2\pi i\cdot \int_{0}^{\infty}\Phi(x)^2 dx,
\end{split}
\end{align}
i.e.,
\begin{equation}
C_{\Phi} = 4 \pi^2 \int_{0}^{\infty}\Phi(x)^2 dx = -2\pi i \int_{(\epsilon')}\tilde{\Phi}\left(\frac 1 2+\beta\right)\widetilde{\Phi}\left(\frac{1}{2}-\beta\right)~d\beta.
\end{equation}
Next, note that
\begin{align}
\begin{split}
\tilde{\Phi}'\left(\frac 1 2-\beta \right)=-\frac{d}{d\beta}\tilde{\Phi}\left(\frac 1 2-\beta\right)&=-\frac{d}{d\beta}\int_{0}^{\infty}\Phi(x)x^{\frac{1}{2}-\beta-1}dx\\
&=\int_{0}^{\infty}\Phi(x)(\log x) x^{-\beta -\frac{1}{2}}dx.
\end{split}
\end{align}
Upon setting $g(y) = y \cdot \Phi(e^{y})e^{y/2}$, we write
\begin{align}
\int_{0}^{\infty}\Phi(x)(\log x) x^{ -\frac{1}{2}-it}dx= \int_{\R}g(y)e^{-iyt}dy =\widehat{g}\left(\frac {t}{2\pi}\right),
\end{align}
so that by shifting to the half-line Re$(\beta) = 1/2$, it follows that
\begin{align}
\begin{split}
2 \pi i\int_{(\epsilon')}\tilde{\Phi}\left(\frac 1 2+\beta \right)\tilde{\Phi}'\left(\frac 1 2-\beta \right)~d\beta &= 2\pi i \int_{\R}\widehat{g}\left(\frac {t}{2\pi}\right)\widehat{\phi}\left(-\frac {t}{2\pi}\right)i ~dt\\
&= (2\pi i)^{2} \cdot \int_{\R}\widehat{g}(x)\overline{\widehat{\phi}(x)} ~dx\\
&= -4\pi ^2 \cdot \int_{\R}g(x)\overline{\phi(x)} ~dx\\
&= -4\pi ^2 \cdot \int_{0}^{\infty}\log x \cdot \Phi(x)^2 ~dx.
\end{split}
\end{align}
\end{proof}

\section{Proof of Lemma \ref{1st total}}
In this section we seek to compute

\begin{equation}\label{first integral}
I_{1}= \int_{(\epsilon')}\int_{(\epsilon)}\frac{\partial }{\partial \beta}\frac{\partial}{\partial \alpha} G(\alpha,\beta,\gamma,\delta)\bigg \vert_{(\alpha,\beta,\alpha,\beta)}\tilde \Phi\left(\frac{1}{2}+\alpha\right)\tilde \Phi\left(\frac{1}{2}+\beta\right)X^{\alpha+\beta}d\alpha d\beta.
\end{equation}
  Note that

\begin{align}\label{dadb Y}
\begin{split}
\frac{\partial}{\partial \alpha}\frac{\partial}{\partial\beta}&G(\alpha,\beta,\gamma,\delta)\bigg \vert_{(\alpha,\beta,\alpha,\beta)} = \frac{\partial}{\partial \alpha}\frac{\partial}{\partial\beta}\bigg (\mathcal{Y}(\alpha,\beta,\gamma,\delta)\cdot \mathcal{A}(\alpha,\beta,\gamma,\delta)\bigg )\bigg \vert_{(\alpha,\beta,\alpha,\beta)}\\
&=\frac{\zeta''}{\zeta}(1+\alpha+\beta)-\frac{\zeta'}{\zeta}(1+\alpha+\beta)^2+\frac{\zeta'}{\zeta}(1+2\alpha)\frac{\zeta'}{\zeta}(1+2\beta)\\
&\phantom{=}+\frac{\zeta'}{\zeta}(1+2\alpha)\cdot\frac{\partial}{\partial \beta}\mathcal{A}(\alpha,\beta,\gamma,\delta)\bigg|_{(\alpha,\beta,\alpha,\beta)}\\
    & \phantom{=}+\frac{\zeta'}{\zeta}(1+2\beta)\cdot \frac{\partial}{\partial \alpha}\mathcal{A}(\alpha,\beta,\gamma,\delta)\bigg|_{(\alpha,\beta,\alpha,\beta)}+\frac{\partial}{\partial \alpha}\frac{\partial}{\partial \beta}\mathcal{A}(\alpha,\beta,\gamma,\delta)\bigg|_{(\alpha,\beta,\alpha,\beta)},
\end{split}
\end{align}

where we recall that $\tilde{\mathcal{A}}(\alpha,\beta,\alpha,\beta)=1$. Since
\begin{equation}
h(\alpha,\beta) := \frac{\partial}{\partial \alpha}\frac{\partial}{\partial \beta}A(\alpha,\beta,\gamma,\delta)\bigg|_{(\alpha,\beta,\alpha,\beta)}
\end{equation}
is holomorphic in $\Omega \times \Omega$,  by Lemma \ref{holomorphic integrals} we find that the integral corresponding to this term is bounded by $O\left(X^{-2/5}\right)$.  Moreover, by an application of Lemma \ref{holomorphic bound 2}, the integrals corresponding to
\begin{equation}
\frac{\zeta'}{\zeta}(1+2\alpha)\cdot \frac{\partial}{\partial \beta}A(\alpha,\beta,\gamma,\delta)\bigg|_{(\alpha,\beta,\alpha,\beta)}\hspace{5mm} \textnormal{and} \hspace{5mm} \frac{\zeta'}{\zeta}(1+2\beta)\cdot \frac{\partial}{\partial \alpha}A(\alpha,\beta,\gamma,\delta)\bigg|_{(\alpha,\beta,\alpha,\beta)}
\end{equation}
are each bounded by $O\left(X^{-1/5}\right)$. The main contributions to (\ref{first integral}) thus come from
\begin{equation}
\frac{\zeta''}{\zeta}(1+\alpha+\beta), \hspace{5mm} -\frac{\zeta'}{\zeta}(1+\alpha+\beta)^2, \hspace{5mm} \textnormal{and } \hspace{5mm} \frac{\zeta'}{\zeta}(1+2\alpha)\cdot \frac{\zeta'}{\zeta}(1+2\beta),
\end{equation}
and we now proceed to separately compute each of the three corresponding integrals.

\subsection{Computing $\frac{\zeta''}{\zeta}(1+\alpha+\beta):$}\label{1st}
The first double integral we would like to compute is

\begin{align}\label{firstintegral}
\begin{split}
I_{(\ref{1st})}&:=\int_{(\epsilon')}\int_{(\epsilon)}\frac{\zeta''}{\zeta}(1+\alpha+\beta)\tilde{\Phi}\left(\frac 1 2+\alpha\right)\tilde{\Phi}\left(\frac 1 2+\beta\right)X^{(\alpha+\beta)} ~d\alpha~d\beta\\
&=\int_{(\epsilon')}\tilde{\Phi}\left(\frac 1 2+\beta\right)X^{\beta} \int_{(\epsilon)}f_{(\ref{1st})}(\alpha) ~d\alpha ~d\beta,
\end{split}
\end{align}
 
where
\begin{equation}
f_{(\ref{1st})}(\alpha) := \frac{\zeta''}{\zeta}(1+\alpha+\beta)\tilde{\Phi}\left(\frac 1 2+\alpha\right)X^{\alpha}.
\end{equation}

Since $f_{(\ref{1st})}$ has one double pole at $\alpha=-\beta$, it follows from Lemma \ref{countour bound} that

\begin{align}
\int_{(\epsilon)}f_{(\ref{1st})}(\alpha) ~d\alpha &= 2\pi i \cdot \text{Res}(f_{(\ref{1st})},-\beta)+O\left(X^{-\frac{1}{5}}\right).
\end{align}
To compute $\text{Res}(f_{(\ref{1st})},-\beta)$, we split $f_{(\ref{1st})}(\alpha)$ into two parts.\\
\\
i) First, we expand $\frac{\zeta''}{\zeta}(1+\alpha+\beta)$ about the point $\alpha = -\beta$, yielding

\begin{equation}
\frac{\zeta''}{\zeta}(1+\alpha+\beta) = \frac{2}{(\alpha+\beta)^{2}}-\frac{2\gamma_{0}}{(\alpha+\beta)}+2(\gamma_0^2+\gamma_1)+h.o.t.,
\end{equation}
where $\gamma_i$ are Stieltjes constants, not to be confused with the variable $\gamma$ used previously.\\

ii) Next, we expand $g(\alpha) = \tilde{\Phi}\left(\frac 1 2+\alpha\right)X^{\alpha}$ about the point $\alpha = -\beta$.  Since
\begin{equation}
g'(\alpha) =  \tilde{\Phi}\left(\frac 1 2+\alpha \right)(\log X) X^{\alpha}+\frac{d}{d\alpha}\tilde{\Phi}\left(\frac 1 2+\alpha \right)X^{\alpha},
\end{equation}

it follows that
\begin{align}\label{g expansion}
\begin{split}
g(\alpha)&= \tilde{\Phi}\left(\frac 1 2-\beta \right)X^{-\beta}+\bigg(\tilde{\Phi}\left(\frac 1 2-\beta\right)(\log X) X^{-\beta}\\
&\phantom{=}+\tilde{\Phi}'\left(\frac 1 2-\beta\right)X^{-\beta}\bigg)(\alpha+\beta) +h.o.t.
\end{split}
\end{align}

Multiplying the two Taylor expansions above, we find that

\begin{align}
\begin{split}
\textnormal{Res}(f_{(\ref{1st})},-\beta)&= 2\bigg(\tilde{\Phi}\left(\frac 1 2-\beta\right)(\log X) +\tilde{\Phi}'\left(\frac 1 2-\beta\right)-\gamma_{0}\tilde{\Phi}\left(\frac 1 2-\beta \right)\bigg)X^{-\beta},
\end{split}
\end{align}

and therefore
\begin{align*}
\begin{split}
\int_{(\epsilon)}f_{(\ref{1st})}(\alpha) ~d\alpha&=4\pi i \bigg(\tilde{\Phi}\left(\frac 1 2-\beta \right)(\log X) +\tilde{\Phi}'\left(\frac 1 2-\beta \right)-\gamma_{0}\tilde{\Phi}\left(\frac 1 2-\beta \right)\bigg)X^{-\beta}\\
&\phantom{=}+O\left(X^{-\frac{1}{5}}\right).
\end{split}
\end{align*}

By an application of Lemma \ref{countour bound}, it follows that

\begin{align}
\begin{split}
&I_{(\ref{1st})} =4\pi i \bigg(\log X \int_{(\epsilon')}\tilde{\Phi}\left(\frac 1 2+\beta \right) \tilde{\Phi}\left(\frac 1 2-\beta \right)~d\beta\\
&+\int_{(\epsilon')}\tilde{\Phi}\left(\frac 1 2+\beta \right) \tilde{\Phi}'\left(\frac 1 2-\beta \right)~d\beta-\gamma_{0}\int_{(\epsilon')}\tilde{\Phi}\left(\frac 1 2+\beta \right)\tilde{\Phi}\left(\frac 1 2-\beta \right)~d\beta\bigg)\\
&+O\left(X^{-\frac{2}{5}}\right),
\end{split}
\end{align}
i.e.,
\begin{equation}\label{1st; 1}
\boxed{I_{(\ref{1st})} = - 2 (\log X) C_{\Phi}-2 C'_{\Phi}+2 \gamma_{0} C_{\Phi}+O\left(X^{-\frac{2}{5}}\right).}
\end{equation}

\subsection{Computing $-\frac{\zeta'}{\zeta}(1+\alpha+\beta)^2$}\label{2nd}

Next, we are interested in the integral
\begin{align}\label{integral3}
\begin{split}
I_{(\ref{2nd})}&:=-\int_{(\epsilon')}\int_{(\epsilon)}\frac{\zeta'}{\zeta}(1+\alpha+\beta)^2\cdot \tilde{\Phi}\left(\frac 1 2+\alpha \right)\tilde{\Phi}\left(\frac 1 2+\beta \right)X^{\alpha+\beta} ~d\alpha~d\beta\\
&\hspace{1mm}=-\int_{(\epsilon')}X^{\beta}\tilde{\Phi}\left(\frac 1 2+\beta \right)\int_{(\epsilon)}f_{(\ref{2nd})}(\alpha) ~d\alpha~d\beta,
\end{split}
\end{align}

where
\begin{equation}
f_{(\ref{2nd})}(\alpha) := \frac{\zeta'}{\zeta}(1+\alpha+\beta)^2\cdot \tilde{\Phi}\left(\frac 1 2+\alpha \right)X^{\alpha}.
\end{equation}
Since $f_{(\ref{2nd})}(\alpha)$ has a single pole at $\alpha = -\beta$, it follows from Lemma \ref{countour bound} that

\begin{equation}
\int_{(\epsilon)}f_{(\ref{2nd})}(\alpha) ~d\alpha = 2\pi i \cdot \textnormal{Res}(f_{(\ref{2nd})},-\beta) +O\left(X^{-\frac 1 5}\right).
\end{equation}

To determine the residue of this integral at the point $\alpha = -\beta$, we expand $\frac{\zeta'}{\zeta}(1+\alpha+\beta)^2$ and $g(\alpha) := \tilde{\Phi}\left(\frac 1 2+\alpha \right)X^{\alpha}$ about the point $\alpha = -\beta$, yielding
\begin{align}
\begin{split}
\frac{\zeta'}{\zeta}(1+\alpha+\beta)^2 &= \frac{1}{(\alpha+\beta)^2}-\frac{2\gamma_{0}}{(\alpha+\beta)}+h.o.t.,
\end{split}
\end{align}
and 
\begin{align}
\begin{split}
g(\alpha)&= \tilde{\Phi}\left(\frac 1 2-\beta \right)X^{-\beta}+\left(\tilde{\Phi}\bigg(\frac 1 2-\beta \right)(\log X) X^{-\beta}\\
&\phantom{=}+\tilde{\Phi}'\left(\frac 1 2-\beta \right)X^{-\beta}\bigg)(\alpha+\beta)+h.o.t.,
\end{split}
\end{align}

so that

\begin{align}
\begin{split}
\textnormal{Res}(f_{(\ref{2nd})},-\beta) &= \bigg(\tilde{\Phi}\left(\frac 1 2-\beta\right)(\log X) +\tilde{\Phi}'\left(\frac 1 2-\beta\right)-2\gamma_{0}\tilde{\Phi}\left(\frac 1 2-\beta \right)\bigg)X^{-\beta}.
\end{split}
\end{align}

It follows that 
\begin{align*}
\begin{split}
\int_{(\epsilon)}f_{(\ref{2nd})}(\alpha) ~d\alpha &= 2\pi i \bigg(\tilde{\Phi}\left(\frac 1 2-\beta \right)(\log X) +\tilde{\Phi}'\left(\frac 1 2-\beta \right)\\
&\phantom{=}-2\gamma_{0}\tilde{\Phi}\left(\frac 1 2-\beta \right)\bigg)X^{-\beta}+O\left(X^{-\frac{1}{5}}\right),
\end{split}
\end{align*}
from which we obtain

\begin{equation}\label{1st; 2}
\boxed{    I_{(\ref{2nd})}=(\log X) C_{\Phi}+C'_{\Phi}-2\gamma_{0} C_{\Phi}+O\left(X^{-\frac{2}{5}}\right).}
\end{equation}
\subsection{Computing $\left(\frac{\zeta'}{\zeta}(1+2\alpha)\right)\left(\frac{\zeta'}{\zeta}(1+2\beta)\right)$}\label{3rd}
Next we are interested in the integral

\begin{align}\label{firstintegral}
\begin{split}
I_{(\ref{3rd})}&:=\int_{(\epsilon')}\int_{(\epsilon)}\frac{\zeta'}{\zeta}(1+2\alpha)\cdot \frac{\zeta'}{\zeta}(1+2\beta)\tilde{\Phi}\left(\frac{1}{2}+\alpha\right)\tilde{\Phi}\left(\frac{1}{2}+\beta\right)X^{\alpha+\beta} ~d\alpha~d\beta\\
&\phantom{:}=\int_{(\epsilon')}f_{(\ref{3rd})}(\beta)~d\beta\cdot \int_{(\epsilon)}f_{(\ref{3rd})}(\alpha)d\alpha,
\end{split}
\end{align}

where
\begin{equation}
f_{(\ref{3rd})}(\alpha) := \frac{\zeta'}{\zeta}(1+2\alpha)\tilde{\Phi}\left(\frac{1}{2}+\alpha\right)X^{\alpha}.
\end{equation}
Since 
\begin{equation}
\frac{\zeta'}{\zeta}(1+2\alpha) = -\frac{1}{2\alpha} + \gamma_{0} + h.o.t.,
\end{equation}

$f$ has a simple pole at $\alpha = 0$ with residue

\begin{align}
\text{Res}(f_{(\ref{3rd})},0) &=\lim_{\alpha \rightarrow 0}\alpha \cdot f_{(\ref{3rd})}(\alpha)=-\frac{1}{2}\tilde{\Phi}\left(\frac{1}{2}\right).
\end{align}

It thus follows from Lemma \ref{countour bound} that 
\begin{equation}
\int_{(\epsilon)}f_{(\ref{3rd})}(\alpha)d\alpha = -\pi i \tilde{\Phi}\left(\frac 1 2\right)+O\left(X^{-\frac 1 5}\right),
\end{equation}
and similarly 
\begin{equation}
\int_{(\epsilon)}f_{(\ref{3rd})}(\beta)d\beta = -\pi i \tilde{\Phi}\left(\frac 1 2\right)+O\left(X^{-\frac 1 5}\right),
\end{equation}
from which we conclude that

\begin{equation}\label{1st; 3}
\boxed{I_{(\ref{3rd})} = -\pi^2\left(\tilde{\Phi}\left(\frac{1}{2}\right)\right)^2+O\left(X^{-\frac 1 5}\right).}
\end{equation}
Lemma \ref{1st total} then follows upon combing the results of (\ref{1st; 1}), (\ref{1st; 2}), and (\ref{1st; 3}).

\section{Proof of Lemma \ref{second total}}
Next, we consider the quantity
\begin{align}\label{2nd ratios piece}
\begin{split}
    &\frac{\partial}{\partial \alpha}\frac{\partial}{\partial\beta}\Bigg(\frac{1}{1-2\alpha} \left(\frac{\pi}{2}\right)^{2\alpha} G(-\alpha,\beta,\gamma,\delta)\Bigg)\Bigg|_{(\alpha,\beta,\alpha,\beta)}=\frac{\zeta(1-2\alpha)}{(1-2\alpha)}\left(\frac \pi 2\right)^{2\alpha}\Bigg(\mathcal{A}(-\alpha,\beta,\alpha,\beta)\\
    &\bigg(-\frac{\zeta'}{\zeta}(1+2\beta)-\frac{\zeta'}{\zeta}(1-\alpha+\beta)+\frac{\zeta'}{\zeta}(1+\alpha+\beta)\bigg)-\frac{d}{d\beta}\mathcal{A}(\alpha,\beta,\gamma,\delta)\bigg|_{(-\alpha,\beta,\alpha,\beta)}\Bigg)
\end{split} 
   \end{align}
coming from the integral $I_{2}$, as well as the symmetric quantity
\begin{align}\label{3rd ratios piece}
\begin{split}    &\frac{\partial}{\partial \alpha}\frac{\partial}{\partial\beta}\bigg(\frac{1}{1-2\beta} \left(\frac{\pi}{2}\right)^{2\beta} G(\alpha,-\beta,\gamma,\delta)\bigg)\bigg|_{(\alpha,\beta,\alpha,\beta)}=\frac{\zeta(1-2\beta)}{(1-2\beta)}\left(\frac{\pi}{2}\right)^{2\beta}\Bigg(\mathcal{A}(\alpha,-\beta,\alpha,\beta)\\
&\bigg(-\frac{\zeta'}{\zeta}(1+2\alpha)-\frac{\zeta'}{\zeta}(1+\alpha-\beta)+\frac{\zeta'}{\zeta}(1+\alpha+\beta)\bigg)-\frac{\partial}{\partial \alpha}\mathcal{A}(\alpha,\beta,\gamma,\delta)\bigg|_{(\alpha,-\beta,\alpha,\beta)}\Bigg)
\end{split}
   \end{align}
coming from the integral $I_{3}$. As before, we approach this term by term, and note that by an application of Lemma \ref{holomorphic bound 2}, the integrals over

\begin{equation}
\frac{d}{d\beta}\mathcal{A}(\alpha,\beta,\gamma,\delta)\bigg|_{(-\alpha,\beta,\alpha,\beta)} \hspace{5mm} \textnormal{and} \hspace{5mm}  \frac{\partial}{\partial \alpha}\mathcal{A}(\alpha,\beta,\gamma,\delta)\bigg|_{(\alpha,-\beta,\alpha,\beta)} 
\end{equation}
may be bounded by $O\left(X^{-\frac 1 5}\right)$. Significant contributions then come from integration against the following integrands:\\
\\
$i) -\frac{\zeta'}{\zeta}(1+2\beta)$ and $ -\frac{ \zeta'}{\zeta}(1+2\alpha)$,\\
\\
$ii)-\frac{ \zeta'}{ \zeta}(1-\alpha+\beta)$ and $-\frac{ \zeta'}{ \zeta}(1+\alpha-\beta)$,\\
\\
$iii) \hspace{2mm} 2 \cdot \frac{\zeta'}{ \zeta}(1+\alpha+\beta)$.
\subsection{Computing $ -\frac{ \zeta'}{\zeta}(1+2\beta)$ and $ -\frac{ \zeta'}{\zeta}(1+2\alpha)$:} \label{4th}
Combining the discussion above with (\ref{full conjecture}), we seek to compute the following integral:

\begin{align}
I_{(\ref{4th})}&:=-\int_{(\epsilon)}\frac{\zeta(1-2 \alpha)}{(1-2\alpha)} \cdot \left(\frac{\pi}{2}\right)^{2\alpha}\tilde{\Phi}\left(\frac 1 2+\alpha\right)X^{\alpha(1-2\lambda)}\bigg (\int_{(\epsilon')}f_{(\ref{4th})}(\beta) d\beta\bigg )~d\alpha,
\end{align}
where
\begin{equation}
f_{(\ref{4th})}(\beta) := \mathcal{A}(-\alpha,\beta,\alpha,\beta)\frac{\zeta'}{\zeta}(1+2 \beta)\tilde{\Phi}\left(\frac 1 2+\beta\right)X^{\beta}.
\end{equation}
Note that since
\begin{equation}
\frac{\zeta'}{\zeta}(1+2\beta) = -\frac{1}{2\beta} + \gamma_{0} + h.o.t.,
\end{equation}

$f_{(\ref{4th})}$ has a simple pole at $\beta = 0$ with residue
\begin{align}
\textnormal{Res}(f_{(\ref{4th})},0) &= -  \mathcal{A}(-\alpha,0,\alpha,0)\frac{1}{2}\tilde{\Phi}\left(\frac 1 2\right),
\end{align}
so that by Lemma \ref{countour bound},

\begin{equation}
\int_{(\epsilon')}f_{(\ref{4th})}(\beta) d\beta = -\pi i \mathcal{A}(-\alpha,0,\alpha,0)\tilde{\Phi}\left(\frac 1 2\right)+O\left(X^{-\frac 1 5}\right).
\end{equation}
Inserting this back into the outer integral, we find that

\begin{align}
I_{(\ref{4th})}&=\pi i \tilde{\Phi}\left(\frac 1 2\right)\int_{(\epsilon)}f'_{(\ref{4th})}(\alpha)~d\alpha+O\left(X^{-\frac 1 5}\right),
\end{align}
where
\begin{equation}
f'_{(\ref{4th})}(\alpha) := \mathcal{A}(-\alpha,0,\alpha,0)\left(\frac \pi 2\right)^{2\alpha} X^{\alpha(1-2\lambda)}\frac{\zeta{(1-2\alpha)}}{(1-2\alpha)}\tilde{\Phi}\left(\frac 1 2+\alpha\right).
\end{equation}
If $\lambda > \frac 1 2$, we shift to the vertical line Re$(\alpha) = 1/5$, so that
\begin{align}
\begin{split}
\int_{(\epsilon)}f'_{(\ref{4th})}(\alpha)~d\alpha &=i \int_{\R}\mathcal{A}\left(-\frac 1 5-it,0,\frac 1 5+it,0\right)\left(\frac{\pi^2 X^{1-2\lambda}}{4}\right)^{\frac 1 5 +it}\\
&\phantom{=}\times \frac{\zeta{(\frac 3 5-2it)}}{(\frac 3 5-2it)}\tilde{\Phi}\left(\frac {7}{10}+it\right)~dt\\
&=i \left(\frac{\pi^2 X^{1-2\lambda}}{4}\right)^{\frac 1 5}\int_{\R}\mathcal{A}\left(-\frac 1 5-it,0,\frac 1 5+it,0\right)\left(\frac{\pi^2 X^{1-2\lambda}}{4}\right)^{it}\\
&\phantom{=}\times\frac{\zeta{\left(\frac 3 5-2it\right)}}{\left(\frac 3 5-2it\right)}\tilde{\Phi}\left(\frac {7}{10}+it\right)~dt.
\end{split}
\end{align}
Since the integrand decays rapidly as a function of $t$, the integral is bounded absolutely by a constant that is independent of $\lambda$.  It follows that for any fixed $\lambda > \frac 1 2$,
\begin{equation}
I_{(\ref{4th})} \ll X^{\left(\frac{1}{5}\right)\left(1-2\lambda\right)}.
\end{equation}
If $\lambda < \frac 1 2 $ we shift to the vertical line Re$(\alpha) = -1/5$, pick up a residue at $\alpha = 0$, and bound the remaining contour by $O\left(X^{\left(-\frac 1 5\right)\left(1-2 \lambda \right)}\right)$.  Since
\begin{equation}
\zeta(1-2\alpha) = -\frac{1}{2\alpha} + \gamma_{0}+h.o.t.,
\end{equation}
the residue is given by
\begin{align}
\textnormal{Res}(f'_{(\ref{4th})},0) &=-\frac{1}{2}\tilde{\Phi}\left(\frac 1 2\right),
\end{align}
where we make use of Lemma \ref{A is 1}.  Since
\begin{equation}
2 \pi i \cdot -\frac{1}{2}\tilde{\Phi}\left(\frac 1 2\right) \pi i \tilde{\Phi}\left(\frac 1 2\right)=\pi^2\left(\tilde{\Phi}\left(\frac 1 2\right)\right)^{2},
\end{equation}
it follows that
\begin{equation}
I_{(\ref{4th})} = \left\{
\begin{array}{l l}
\pi^2\left(\tilde{\Phi}\left(\frac 1 2\right)\right)^{2} +O\left(X^{\left(-\frac 1 5 \right)\left(1-2 \lambda \right)}\right)& \text{ if } \lambda < \frac 1 2 \\
O\left(X^{\left(-\frac{1}{5}\right)\left(2\lambda-1 \right)}\right) & \text{ if } \lambda > \frac 1 2.
\end{array} \right.
\end{equation}

Upon including the contribution from the integral over $ -\frac{ \zeta'}{\zeta}(1+2\alpha)$ coming from the third piece of the Ratios Conjecture, we conclude that the combined contribution from these two symmetric pieces together is equal to 
\begin{equation}\label{3rd; 1}
\boxed{2 \cdot I_{(\ref{4th})} = \left\{
\begin{array}{l l}
2\pi^2\left(\tilde{\Phi}\left(\frac 1 2\right)\right)^{2} +O\left(X^{\left(-\frac 1 5 \right)\left(1-2 \lambda \right)}\right)& \text{ if } \lambda < \frac 1 2 \\
O\left(X^{\left(-\frac{1}{5}\right)\left(2 \lambda-1 \right)}\right) & \text{ if } \lambda > \frac 1 2.
\end{array}\right.}
\end{equation}

\subsection{Computing $-\frac{\zeta'}{ \zeta}(1-\alpha+\beta)$ and $-\frac{\zeta'}{ \zeta}(1+\alpha-\beta)$}\label{5th}
In this section we assume that $0<\textnormal{Re}(\alpha) < \textnormal{Re}(\beta) = \epsilon'$.  The integral that we are interested in computing is 

\begin{align}
I_{(\ref{5th})}&:=-\int_{(\epsilon)}\frac{\zeta(1-2 \alpha)}{(1-2\alpha) }\tilde{\Phi}\left(\frac 1 2+\alpha\right)\left(\frac \pi 2\right)^{2\alpha}X^{\alpha(1-2\lambda)}\bigg (\int_{(\epsilon')}f_{(\ref{5th})}(\beta)~d\beta\bigg ) ~d\alpha,
\end{align}
where
\begin{equation}
f_{(\ref{5th})}(\beta) = \mathcal{A}(-\alpha,\beta,\alpha,\beta)\frac{\zeta'}{ \zeta}(1-\alpha+\beta)\tilde{\Phi}\left(\frac 1 2+\beta\right)X^{\beta}.
\end{equation}
Recalling that

\begin{equation}
\frac{\zeta'}{\zeta}(1-\alpha+\beta) = \frac{1}{\alpha-\beta} + \gamma_{0} + h.o.t.,
\end{equation}
we find that $f_{(\ref{5th})}$ has a simple pole at $\alpha = \beta$.  Under the assumption that $0<\textnormal{Re}(\alpha) < \textnormal{Re}(\beta) = \epsilon'$, this pole is picked up upon shifting the contour to the line Re$(\alpha)=-1/5$, and the residue is 
\begin{align}
\textnormal{Res}(f_{(\ref{5th})},\alpha) &=-\mathcal{A}(-\alpha,\alpha,\alpha,\alpha)\tilde{\Phi}\left(\frac 1 2+\alpha \right)X^{\alpha}.
\end{align}

It follows that
\begin{align}
\begin{split}
I_{(\ref{5th})}&=-\int_{(\epsilon)}\frac{\zeta(1-2 \alpha)}{(1-2\alpha)}\tilde{\Phi}\left(\frac 1 2+\alpha\right)\left(\frac{\pi}{2}\right)^{2\alpha}X^{\alpha(1-2\lambda)}\bigg (-2 \pi i\cdot  \textnormal{Res}(f_{(\ref{5th})},\alpha)+ O\left(X^{-\frac 1 5}\right)\bigg ) ~d\alpha\\
&= 2 \pi i \int_{(\epsilon)}f'_{(\ref{5th})}(\alpha) ~d\alpha+ O\left(X^{-\frac 1 5}\right),
\end{split}
\end{align}
where
\begin{equation}
f'_{(\ref{5th})}(\alpha) = \mathcal{A}(-\alpha,\alpha,\alpha,\alpha)\frac{ \zeta(1-2 \alpha)}{(1-2\alpha) }\left(\frac{\pi}{2}\right)^{2\alpha}X^{2\alpha(1-\lambda)}\tilde{\Phi}\left(\frac 1 2+\alpha\right)\tilde{\Phi}\left(\frac 1 2+\alpha \right).
\end{equation}

If $\lambda >1$, we shift to the vertical line Re$(\alpha) = 1/5$, and bound
\begin{align}
\int_{(\epsilon)}f'_{(\ref{5th})}(\alpha)~d\alpha &= \left(X^{\left(\frac{1}{5}\right)\left(2-2\lambda\right)}\right),
\end{align}

while if $\lambda < 1$, we shift to the vertical line Re$(\alpha) = -\frac{1}{5}$, pick up a pole at $\alpha = 0$, and bound the remaining contour by $O\left(X^{\left(-1/5 \right)\left(2-2 \lambda \right)}\right)$.  Since
\begin{align}
\textnormal{Res}(f'_{(\ref{5th})},0) &= - \frac{1}{2}\tilde{\Phi}\left(\frac 1 2\right)\tilde{\Phi}\left(\frac 1 2 \right),
\end{align}
we conclude that

\begin{equation}\label{3rd; 2}
\boxed{I_{(\ref{5th})} = \left\{
\begin{array}{l l}
2\pi^2\left(\tilde{\Phi}\left(\frac 1 2\right)\right)^{2} +O\left(X^{\left(-\frac 2 5\right)\left(1- \lambda \right)}\right)& \text{ if } \lambda < 1 \\
O\left(X^{\left(-\frac{2}{5}\right)\left(\lambda-1 \right)}\right) & \text{ if } \lambda > 1.
\end{array} \right.}
\end{equation}

Lastly, we consider the integral

\begin{align}
I_{(\ref{5th},\textnormal{sym})}&:=-\int_{(\epsilon')}\frac{\zeta(1-2 \beta)}{(1-2\beta) }\tilde{\Phi}\left(\frac 1 2+\beta\right)\left(\frac \pi 2\right)^{2\beta}X^{\beta(1-2\lambda)}\bigg (\int_{(\epsilon)}f_{(\ref{5th},\textnormal{sym})}(\beta)~d\alpha\bigg ) ~d\beta,
\end{align}
where
\begin{equation}
f_{(\ref{5th},\textnormal{sym})}(\beta) = \mathcal{A}(\alpha,-\beta,\alpha,\beta)\frac{\zeta'}{ \zeta}(1+\alpha-\beta)\tilde{\Phi}\left(\frac 1 2+\alpha\right)X^{\alpha},
\end{equation}

which is the symmetry quantity corresponding to $I_{(\ref{5th},\textnormal{sym})}$ coming from (\ref{3rd ratios piece}) above.  Under the assumption that $0<\textnormal{Re}(\alpha) < \textnormal{Re}(\beta)$, the inner integral is holomorphic in the region $-\frac{1}{5}<\textnormal{Re}(\alpha)<\epsilon$, from which it follows that
\begin{equation}\label{3rd; 2sym}
\boxed{I_{(\ref{5th},\textnormal{sym})} = O\left(X^{-\frac{1}{5}}\right).}
\end{equation}
Note that had we instead assumed $0<\textnormal{Re}(\beta) <\textnormal{Re}(\alpha)<1/5$, we would obtain a significant contribution from $I_{(\ref{5th},\textnormal{sym})}$ and a negligible contribution from $I_{(\ref{5th})}$.  In this way, the symmetry between $\alpha$ and $\beta$ is preserved.
\subsection{Computing $\frac{\zeta'}{ \zeta}(1+\alpha+\beta)$}\label{6th}
Next, we compute

\begin{align}
\begin{split}
I_{(\ref{6th})}&:=\int_{(\epsilon)}\left(\frac{\pi}{2}\right)^{2\alpha}\frac{ \zeta(1-2 \alpha)}{(1-2\alpha)}\tilde{\Phi}\left(\frac 1 2+\alpha\right)X^{\alpha(1-2\lambda)}\left(\int_{(\epsilon')}f_{(\ref{6th})}(\beta)d\beta\right)~d\alpha,
\end{split}
\end{align}
where 
\begin{equation}
f_{(\ref{6th})}(\beta) = \mathcal{A}(-\alpha,\beta,\alpha,\beta)\frac{\zeta'}{ \zeta}(1+\alpha+\beta)\tilde{\Phi}\left(\frac 1 2+\beta\right)X^{\beta}.
\end{equation}
Since

\begin{equation}
\frac{\zeta'}{\zeta}(1+\alpha+\beta) = -\frac{1}{\alpha+\beta} + \gamma_{0} + h.o.t.,
\end{equation}
the residue at $\beta = -\alpha$ is

\begin{align}
\textnormal{Res}(f_{(\ref{6th})},-\alpha)&= - \mathcal{A}(-\alpha,-\alpha,\alpha,-\alpha)\tilde{\Phi}\left(\frac 1 2-\alpha\right)X^{-\alpha}.
\end{align}

It follows that
\begin{equation}
\int_{(\epsilon')}f_{(\ref{6th})}(\beta)d\beta = - 2\pi i \mathcal{A}(-\alpha,-\alpha,\alpha,-\alpha)\tilde{\Phi}\left(\frac 1 2-\alpha\right)X^{-\alpha}+O\left(X^{-\frac 1 5}\right),
\end{equation}
and thus upon shifting the line of integration to Re$(\alpha) = 1/5$, we conclude that

\begin{align}\label{3rd; 3rd}
\begin{split}
I_{(\ref{6th})}&=\int_{(\epsilon)}\left(\frac{\pi}{2}\right)^{2\alpha}\frac{\zeta(1-2 \alpha)}{(1-2\alpha)}\tilde{\Phi}\left(\frac 1 2+\alpha\right)X^{\alpha(1-2\lambda)}\left(\textnormal{Res}(f_{(\ref{6th})},-\alpha)+O\left(X^{-\frac 1 5}\right)\right)~d\alpha\\
&=O\left(X^{-\frac {2}{5}\lambda}\right).
\end{split}
\end{align}
Lemma \ref{second total} then follows upon combining the computations in (\ref{3rd; 1}), (\ref{3rd; 2}), (\ref{3rd; 2sym}), and (\ref{3rd; 3rd}).
\section{Proof of Lemma \ref{fourth total}}\label{7th}
Since
\begin{align}\label{4th integral}
    &\frac{\partial}{\partial \alpha} \frac{\partial}{\partial\beta}\bigg( \frac{1}{1-2(\alpha+\beta)} \left(\frac{\pi}{2}\right)^{2(\alpha+\beta)} G(-\alpha,-\beta,\gamma,\delta)\bigg )\bigg \vert_{(\alpha,\beta,\alpha,\beta)}=  \\
    &\frac{\zeta(1-2\alpha)\zeta(1-2\beta)}{(1-2(\alpha+\beta))}\left(\frac \pi 2\right)^{2(\alpha+\beta)}\Bigg(\frac{\zeta(1-\alpha-\beta)\zeta(1+\alpha+\beta)}{\zeta(1+\alpha-\beta)\zeta(1-\alpha+\beta)}\Bigg)\mathcal{A}(-\alpha,-\beta,\alpha,\beta),\nonumber
\end{align}
we write
\begin{align}\label{outer 4th}
I_{4}&=\int_{(\epsilon')}\zeta(1-2\beta)\tilde{\Phi}\left(\frac 1 2+\beta \right)\left(\frac{\pi}{2}\right)^{2\beta}X^{\beta(1 - 2\lambda)} \bigg(\int_{(\epsilon)}f_{4}(\alpha) ~d\alpha\bigg)~d\beta,
\end{align}
where
\begin{align}
\begin{split}
f_{4}(\alpha) &= \mathcal{A}(-\alpha,-\beta,\alpha,\beta)\left(\frac{\pi}{2}\right)^{2\alpha}X^{\alpha(1 - 2\lambda)}\frac{ \zeta(1-2 \alpha)}{(1-2(\alpha+\beta))}\\
&\phantom{=}\times \Bigg(\frac{\zeta(1-\alpha-\beta)\zeta(1+\alpha+\beta)}{\zeta(1+\alpha-\beta)\zeta(1-\alpha+\beta)}\Bigg)\tilde{\Phi}\left(\frac 1 2+\alpha\right).
\end{split}
\end{align}

Suppose $\lambda > 1/2$.  We then shift to the vertical line Re$(\alpha) = 1/5$, so that
\begin{align}
\begin{split}
\int_{(\epsilon)}f_{4}(\alpha) ~d\alpha &=i \left(\frac{\pi^2 X^{1-2\lambda}}{4}\right)^{\frac 1 5}\int_{\R}\mathcal{A}\left(-\frac 1 5-it,-\beta,\frac 1 5+it,\beta\right)\left(\frac{\pi^2 X^{1-2\lambda}}{4}\right)^{it}\\
&\phantom{=}\times \frac{\zeta{(\frac 3 5-2it)}}{(\frac 3 5-2it-2\beta)}\Bigg(\frac{\zeta(\frac 4 5-it-\beta)\zeta\left(\frac 6 5+it+\beta\right)}{\zeta\left(\frac 6 5+it-\beta\right)\zeta\left(\frac 4 5-it+\beta\right)}\Bigg)\tilde{\Phi}\left(\frac {7}{10}+it\right)~dt.
\end{split}
\end{align}
By the decay properties of $\Phi$, the integral is bounded by a constant (depending on $\beta$) that is independent of $\lambda$.  It follows that
\begin{equation}
\int_{(\epsilon)}f_{4}(\alpha) ~d\alpha = O_{\beta}\left(X^{\frac{1}{5}\left(1-2\lambda\right)}\right) = O\left(g(\beta)\cdot X^{\frac{1}{5}\left(1-2\lambda\right)}\right),
\end{equation}
where $g$ does not grow too rapidly as a function of $\beta$.
Inserting this back into the outer integral, and shifting the line of integration to Re$(\beta) = 1/5$, we obtain
\begin{align}
I_{4} &\ll X^{\frac{1}{5}\left(1-2\lambda\right)} \cdot \int_{(\epsilon')}g(\beta)\cdot\zeta(1-2\beta)\tilde{\Phi}\left(\frac 1 2+\beta \right)\left(\frac{\pi}{2}\right)^{2\beta}X^{\beta(1 - 2\lambda)} d\beta\ll X^{\frac{2}{5}\left(1-2\lambda\right)}.
\end{align}
Next, suppose $\lambda < 1/2$.  We shift the line of integration to $\textnormal{Re}(\alpha) = - 1/5$, and pick up a simple at $\alpha=0$, and a double pole at $\alpha = - \beta$.  By an application of Lemma \ref{countour bound}, we then find
\begin{equation}
\int_{(\epsilon)} f_{4}(\alpha)d\alpha =2\pi i\cdot \bigg(\textnormal{Res}(f_{4},0)+\textnormal{Res}(f_{4},-\beta)\bigg)+O\left( X^{-\frac{1}{5}(1-2\lambda)}\right).
\end{equation}
It remains to compute these two residue contributions.
\subsection{Simple Pole at $\alpha = 0$:}\label{8th}
Note that $f_{4}$ has a simple pole at $\alpha = 0$ with residue
\begin{align}
\textnormal{Res}(f_{4},0) &= -\frac{1}{2}\mathcal{A}(0,-\beta,0,\beta) \frac{1}{(1-2\beta)}\tilde{\Phi}\left(\frac 1 2\right),
\end{align}
which contributes when $\lambda < 1/2$.  Inserting this into the outer integral, we find that
\begin{align}\label{outer simple 4th}
\begin{split}
&\int_{(\epsilon')}\zeta(1-2\beta)\tilde{\Phi}\left(\frac 1 2+\beta \right)\left(\frac{\pi}{2}\right)^{2\beta}X^{\beta(1 - 2\lambda)}\left(-\pi i \mathcal{A}(0,-\beta,0,\beta)\frac{1}{(1-2\beta)}\tilde{\Phi}\left(\frac 1 2\right)\right)d\beta\\
&=-\pi i\tilde{\Phi}\left(\frac 1 2\right)\int_{(\epsilon')}f_{(\ref{8th})}(\beta)d\beta,
\end{split}
\end{align}

where
\begin{align}
f_{(\ref{8th})}(\beta) &= \frac{\zeta(1-2\beta)}{(1-2\beta)}\tilde{\Phi}\left(\frac 1 2+\beta \right)\left(\frac{\pi}{2}\right)^{2\beta}X^{\beta(1 - 2\lambda)}\mathcal{A}(0,-\beta,0,\beta).
\end{align}

The integral in (\ref{outer simple 4th}) has a simple pole at $\beta = 0$ with residue
\begin{align}
\textnormal{Res}(f_{(\ref{8th})},0) &=-\frac{1}{2}\tilde{\Phi}\left(\frac 1 2\right),
\end{align}
so that the total contribution from this pole is 
\begin{equation}\label{simple pole contribution}
\boxed{-\pi^2 \left(\tilde{\Phi}\left(\frac 1 2\right)\right)^{2}+ O\left(X^{-\frac 1 5(1-2\lambda)}\right).}
\end{equation}
\subsection{Double Pole at $\alpha = -\beta$:}
To compute the residue of $f_{4}$ at the point $\alpha = - \beta$, we split $f_{4}(\alpha)$ into three components.\\
\\
i) First, define
\begin{equation}
h(\alpha):= \mathcal{A}(-\alpha,-\beta,\alpha,\beta)\frac{\zeta(1-2 \alpha)}{\zeta(1+\alpha-\beta)\zeta(1-\alpha+\beta)}\frac{\tilde{\Phi}\left(\frac 1 2+\alpha\right)}{(1-2(\alpha+\beta))}.
\end{equation}
Since $h(\alpha)$ is holomorphic at $\alpha=-\beta$, we may expand it as a power series of the form
\begin{align}
h(\alpha) &=h(-\beta)+ h^{(1)}(-\beta)(\alpha+\beta)+h.o.t.
\end{align}

ii) Next, we expand
\begin{equation}
\left(\frac \pi 2 \right)^{2\alpha}\left( X^{1 - 2\lambda}\right)^{\alpha} = e^{\alpha (\log (\frac {\pi^2}{4}) +(1 - 2\lambda)\log X)}=e^{\alpha\cdot C}
\end{equation}
 about the point $\alpha = -\beta$, where
\begin{equation}
C := \log \left(\frac {\pi^2}{4}\right) +(1 - 2\lambda)\log X.
\end{equation}
The expansion is given as

\begin{equation}
e^{\alpha\cdot C}=e^{-\beta\cdot C}+C\cdot e^{-\beta\cdot C}(\alpha+\beta) +h.o.t.
\end{equation}

iii) Finally, we note that 
\begin{equation}
\zeta(1-\alpha-\beta)\zeta(1+\alpha+\beta) = \left(-\frac{1}{\alpha+\beta} + \gamma_{0}+h.o.t.\right)\left(\frac{1}{\alpha+\beta} + \gamma_{0}+h.o.t.\right).
\end{equation}

The total residue is then found to be the full coefficient of $(\alpha+\beta)^{-1}$, i.e.,
\begin{align}
\begin{split}
\textnormal{Res}(f_{4},-\beta) &=-C\cdot e^{-\beta\cdot C}h(-\beta) -e^{-\beta\cdot C}h^{(1)}(-\beta).
\end{split}
\end{align}
We now compute these two contributions separately.
\subsubsection{First Piece}
The total contribution from the first piece is 

\begin{align}
&-2\pi i \cdot \left(\log \left(\frac {\pi^2}{4}\right) +(1 - 2\lambda)\log X\right)\left(\frac{\pi}{2}\right)^{-2\beta} X^{-\beta(1 - 2\lambda)}\cdot \frac{\tilde{\Phi}\left(\frac 1 2-\beta\right)}{\zeta(1-2\beta)},
\end{align}
where we note that $\mathcal{A}(\beta,-\beta,-\beta,\beta)= 1$.  Inserting this into the outer integral of (\ref{outer 4th}), we find that the main contribution of this piece is

\begin{align}
-2\pi i \int_{(\epsilon')}\tilde{\Phi}\left(\frac 1 2+\beta \right)\tilde{\Phi}\left(\frac 1 2-\beta \right)\cdot \left(\log \left(\frac {\pi^2}{4}\right) +(1 - 2\lambda)\log X\right)~d\beta,
\end{align}
i.e., the total contribution is given by
\begin{equation}\label{first piece}
\boxed{C_{\Phi}\left(\log \left(\frac {\pi^2}{4}\right) +(1 - 2\lambda)\log X\right)+ O\left(X^{-\frac 1 5(1-2\lambda)}\right).}
\end{equation}
\subsubsection{Second Piece}
One directly computes

\begin{align}
\begin{split}
h^{(1)}(-\beta)&=\frac{1}{\zeta(1-2\beta)}\Bigg(\tilde{\Phi}'\left(\frac 1 2-\beta\right)+\tilde{\Phi}\left(\frac 1 2-\beta\right)\bigg(2-\frac{\zeta'}{\zeta}(1-2\beta) -\frac{\zeta'}{\zeta}(1+2\beta)\\
&\hspace{5mm}+A_{\beta}'(-\beta)+\frac{L'}{L}(1-2
\beta)+\frac{L'}{L}(1+2\beta)\bigg)\Bigg),
\end{split}
\end{align}
upon noting that $A_{\beta}(-\beta)=A(\beta,-\beta,-\beta,\beta)=1.$  Inserting this expression back into the outer integral of (\ref{outer 4th}), we find that the total contribution from this piece is 
\begin{equation}\label{second piece}
\boxed{2 C_{\Phi}+C_{\Phi,\zeta}-C_{\Phi,L} +C'_\Phi -A_{\Phi}'+O\left(X^{-\frac 1 5(1-2\lambda)}\right),}
\end{equation}
where
\begin{equation}\label{zeta constant}
C_{\Phi,\zeta} :=2\pi i \int_{(\epsilon')}\tilde{\Phi}\left(\frac 1 2+\beta \right)\tilde{\Phi}\left(\frac 1 2-\beta \right)\left(\frac{\zeta'}{\zeta}(1-2\beta)+\frac{\zeta'}{\zeta}(1+2\beta)\right)~d\beta,
\end{equation}
\begin{equation}\label{L constant}
C_{\Phi,L} := 2\pi i \int_{(\epsilon')}\tilde{\Phi}\left(\frac 1 2+\beta \right)\tilde{\Phi}\left(\frac 1 2-\beta \right)\left(\frac{L'}{L}(1+2\beta)+\frac{L'}{L}(1-2\beta)\right)~d\beta,
\end{equation}
and
\begin{equation}\label{A constant}
A_{\Phi}' := - 4\pi i \int_{(\epsilon')} \tilde{\Phi}\left(\frac 1 2+\beta \right)\tilde{\Phi}\left(\frac 1 2-\beta \right)\Bigg(\sum_{\substack{p\equiv 3(4)\\p \textnormal{ prime}}}\frac{\left(p^{2+8\beta}+p^2-2 p^{4\beta}\right) \log p}{p^{2+8\beta}+p^2-p^{4\beta}-p^{4+4\beta}}\Bigg)d\beta,
\end{equation}
where we have made use of Lemma \ref{A integral}.  Lemma \ref{fourth total} then follows upon combining the results of (\ref{simple pole contribution}), (\ref{first piece}), and (\ref{second piece}).

\appendix

\section{Obtaining Numerical Evidence for Conjecture \ref{conj}}

The data provided in Figure \ref{abillionprimes} was obtained using the Mathematica code provided below.  Fix $X = 10^9$, $\Phi = 1_{(0,1]}$, and $f = 1_{[-\frac{1}{2},\frac{1}{2}]}$.  The code outputs $\mathrm{Var}(\psi_{K,X})/(\langle \psi_{K,X} \rangle \log X)$ as a function of $\lambda:= \log K/\log X$, for values of $\lambda$ ranging between $0.1 \leq \lambda \leq 0.7$ with step size $0.025$.  For simplicity, we ignore the small contributions coming from prime powers, as well as from the unique prime $( 1+i ) \subset \Z[i]$ lying above 2.

\fontsize{10}{12}\selectfont

\begin{mmaCell}{Code}
X = 10^9; (* This size took a long time for Mathematica to run.*)
A = 1; (* We count primes in Z[i] with norm from A to B *)
B = X;

Roundmod[\mmaPat{m_}, \mmaPat{res_}, \mmaPat{N_}] = Ceiling[(\mmaPat{m} - \mmaPat{res})/\mmaPat{N}]*\mmaPat{N} + \mmaPat{res}; (* An auxiliary function which finds the smallest integer n >= m such that n=res (mod N). *)

gauss = Take[
   Ratios[Flatten[
     Table[PowersRepresentations[p, 2, 2], {p, 
       Select[Range[Roundmod[A, 1, 4], B, 4], PrimeQ]}]]], {1, -1, 
    2}];
    (* For primes p which are 1 modulo 4 between the specified ranges A and B, we compute the unique representation p = a^2 + b^2 for a,b nonnegative integers with a < b. Then we return the list of numbers b/a *)
gauss2 = Table[N[ArcTan[theta]], {theta, gauss}]; (* Using the list "gauss" we compute the angles associated to Gaussian primes (a + bi) lying over a rational prime congruent to 1 modulo 4, for 0 <= a < b. *)
gauss3 = Table[N[ArcTan[theta]], {theta, Table[1/gauss[[i]], {i, Length[gauss]}]}]; (* Using the list "gauss" we compute the angles associated to Gaussian primes (a + bi) lying over a rational prime congruent to 1 modulo 4, for 0 <= b < a. These are complex conjugates of the primes giving angles in the "gauss2" list. *)

primes1 = Select[Range[Roundmod[A, 1, 4], B, 4], PrimeQ]; (* We find the primes which are 1 modulo 4, between the ranges A and B. *)
primes3 = Select[Range[Roundmod[Sqrt[A], 3, 4], Sqrt[B], 4], PrimeQ]; (* We find the primes which are 3 modulo 4, between the ranges A and B. *)
trivial = Table[0., Length[primes3]]; (* The rational primes which are 3 modulo 4 remain prime in the Gaussian integers, and have an angle of zero. This list contains one zero for each prime congruent to 3 modulo 4, between A and B. *)

allAngles = Join[trivial, gauss2, gauss3]; (* This is a list, with multiplicity, of the angles of Gaussian primes with norm between A and B. By convention, the angle is in the interval [0,Pi/2). *)

allPrimes = N[Join[2 Log[primes3], Log[primes1], Log[primes1]]]; (* The elements of this list correspond to Gaussian primes P with norm between A and B. The Gaussian prime P appears as the number log(N(P)), which is the von Mangoldt function evaluated at P. Suppose P lies over a rational prime p. If p is 3 modulo 4 then N(P) = p^2, and P is the unique Gaussian prime lying over p. If p is 1 modulo 4, then we have N(P)=p and there is exactly one other Gaussian prime P' lying over the same prime p. *)

anglesWeights = WeightedData[allAngles, allPrimes];
Do[Print[{j, 
    Divide[Variance[
      Last[HistogramList[
        anglesWeights, {0, Divide[Pi, 2], 
         Divide[Pi, 2 Round[X^j]]}]]], X^{1 - j}*Log[X]]}], {j, .1, 
  .7, .025}] (* This outputs pairs {lambda, Var(psi_{K,X})/(<psi_{K,X}> log(X))} for .1 <= lambda <= .7, with step size .025 for lambda.*)
\end{mmaCell}

\fontsize{11}{12}\selectfont

The following is used to compute a numerical approximation for $C_{\Phi,\zeta}$, when $\Phi = 1_{(0,1]}$:

\fontsize{10}{12}\selectfont

\begin{mmaCell}{Code}
<< NumericalCalculus` (* imports a package that allows us to take numerical limits and derivatives *)
PhiTilde[\mmaPat{s_}] := (1/\mmaPat{s}) (* Mellin transform of Phi. *)
PhiTildeProduct[\mmaPat{t_}] := PhiTilde[1/2 + I*\mmaPat{t}]*PhiTilde[1/2 - I*\mmaPat{t}]
ZetaPrime[\mmaPat{s_}] := ND[Zeta[t], t, \mmaPat{s}] (* Using Mathematica's in-built Zeta function. We take a derivative *)

2*Pi*I*(I*
   NIntegrate[
    PhiTildeProduct[t]*(ZetaPrime[1 + .2 + 2*I*t]/Zeta[1 + .2 + 2*I*t] + 
       ZetaPrime[1 - .2 - 2*I*t]/Zeta[1 - .2 - 2*I*t]), {t, -25, 25}])
\end{mmaCell}

\fontsize{11}{12}\selectfont

The following is used to compute a numerical approximation for $C_{\Phi,L}$, when $\Phi = 1_{(0,1]}$:

\fontsize{10}{12}\selectfont

\begin{mmaCell}{Code}
<< NumericalCalculus` (* imports a package that allows us to take numerical limits and derivatives *)
PhiTilde[\mmaPat{s_}] := (1/\mmaPat{s}) (* Mellin transform of Phi. *)
PhiTildeProduct[\mmaPat{t_}] := PhiTilde[1/2 + I*\mmaPat{t}]*PhiTilde[1/2 - I*\mmaPat{t}]
L[\mmaPat{s_}] := N[DirichletL[4, 2, \mmaPat{s}]] (* This is the Dirichlet L-function for the non-trivial character modulo 4. *)
LPrime[\mmaPat{s_}] := ND[L[t], t, \mmaPat{s}] (* Takes a derivative of the L-function. *)

-2*Pi*I*(I*
   NIntegrate[
    PhiTildeProduct[t]*(LPrime[1 + .2 + 2*I*t]/L[1 + .2 + 2*I*t] + 
       LPrime[1 - .2 - 2*I*t]/L[1 - .2 - 2*I*t]), {t, -25, 25}])
\end{mmaCell}

\fontsize{11}{12}\selectfont

The following is used to compute a numerical approximation for $A'_{\Phi}$, when $\Phi = 1_{(0,1]}$:

\fontsize{10}{12}\selectfont

\begin{mmaCell}{Input}
h[\mmaPat{β_}, \mmaPat{p_}] = \mmaFrac{ (p^2 - 2 p^(4 \mmaUnd{β}) + p^(2 + 8 \mmaUnd{β})) Log[p] }{ p^2 - p^(4 \mmaUnd{β}) - p^(4 + 4 \mmaUnd{β}) + p^(2 + 8 \mmaUnd{β}) }
\end{mmaCell}

\begin{mmaCell}{Code}
PhiTilde[\mmaPat{s_}] := (1/\mmaPat{s}) (* Mellin transform of Phi. *)
PhiTildeProduct[\mmaPat{t_}] := PhiTilde[1/2 + I*\mmaPat{t}]*PhiTilde[1/2 - I*\mmaPat{t}]

qn[\mmaPat{p_}] = NIntegrate[h[I*\mmaPat{t}, p]*PhiTildeProduct[\mmaPat{t}], {\mmaPat{t}, 0, Infinity}]
primes3 = Select[Range[3, 1000, 4], PrimeQ]; (* Selects the primes congruent to 3 modulo 4 which are less than 1000. *)

output = 0;
For[i = 1, i <= Length[primes3], i++,
 output += 2*qn[primes3[[i]]]]
   Print["Range is ", j, ". Integral is ", output]
\end{mmaCell}

\end{document}